\newcommand{\rmnum}[1]{\romannumeral #1}
\newcommand{\Rmnum}[1]{\expandafter\@slowromancap\romannumeral #1@}
\newcommand{\pt}{\partial}
\newcommand{\vf}{\varphi}
\newcommand{\Ext}{\operatorname{Ext}}
\newcommand{\Hom}{\operatorname{Hom}}
\newcommand{\htt}{\mathrm{ht}}
\newcommand{\Ker}{\mathrm{Ker}}
\newcommand{\Imm}{\mathrm{Im}}
\newcommand{\ra}{\rightarrow}
\newcommand{\pf}{\begin{proof}}
\newcommand{\epf}{\end{proof}}
\newcommand{\eq}{\begin{equation}}
\newcommand{\eeq}{\end{equation}}
\newcommand{\eqn}{\begin{equation*}}
\newcommand{\eeqn}{\end{equation*}}
\newcommand{\fra}{\mathfrak{a}}
\newcommand{\frb}{\mathfrak{b}}
\newcommand{\frg}{\mathfrak{g}}
\newcommand{\frh}{\mathfrak{h}}
\newcommand{\frl}{\mathfrak{l}}
\newcommand{\frn}{\mathfrak{n}}
\newcommand{\frp}{\mathfrak{p}}
\newcommand{\frs}{\mathfrak{s}}
\newcommand{\frt}{\mathfrak{t}}
\newcommand{\fru}{\mathfrak{u}}
\newcommand{\frsl}{\mathfrak{sl}}
\newcommand{\frso}{\mathfrak{so}}
\newcommand{\frsp}{\mathfrak{sp}}
\newcommand{\bbC}{\mathbb{C}}
\newcommand{\bbN}{\mathbb{N}}
\newcommand{\bbZ}{\mathbb{Z}}
\newcommand{\caF}{\mathcal{F}}
\newcommand{\caM}{\mathcal{M}}
\newcommand{\caO}{\mathcal{O}}
\newtheorem{theorem}[equation]{Theorem}%[section]
\newtheorem{cor}[equation]{Corollary}
\newtheorem{prop}[equation]{Proposition}
\newtheorem{lemma}[equation]{Lemma}
\theoremstyle{remark}
\newtheorem{remark}[equation]{Remark}
\theoremstyle{definition}
\newtheorem{example}[equation]{Example}
\numberwithin{equation}{section} \setcounter{secnumdepth}{1}
\begin{document}

\title[Dirac cohomology and Euler-Poincar\'{e}]{Dirac cohomology and Euler-Poincar\'{e} pairing for weight modules}
%%   NEW! Title now has optional argument like other sectioning commands.
%% It will appear in running heads on even pages exept for the first one.
%%   The second mandatory argument produces the full title of your paper.
%% It will appear in running head in case you omit the optional one.
%% This title consists of two lines, add extra \\ to get more lines.
%% We think it is a good idea to avoid math formulas in the title.
%% If any they will be boldface.

\author{Jing-Song Huang}
\address[Huang]{Department of Mathematics, Hong Kong University of Science and technology,
Clear Water Bay, Kowloon, Hong Kong SAR, China}
\email{mahuang@ust.hk}

\author{Wei Xiao}
\address[Xiao]{College of Mathematics and statistics, Shenzhen University,
Shenzhen, 518060, Guangdong, China}
\email{xiaow@szu.edu.cn}

\thanks{This work is supported by NSFC Grant No. 11326059.}

\subjclass[2010]{17B10}
%\subjclass[2000]{Primary 22E46}

\maketitle

\begin{abstract}
Let $\frg$ be a reductive Lie algebra over $\bbC$. For any simple weight module of $\frg$ with finite-dimensional weight spaces, we show that its Dirac cohomology is vanished unless it is a highest weight module. This completes the calculation of Dirac cohomology for simple weight modules since the Dirac cohomology of simple highest weight modules was carried out in our previous work. We also show that the Dirac index pairing of two weight modules which have infinitesimal characters agrees with their Euler-Poincar\'{e} pairing. The analogue of this result for Harish-Chandra modules is a consequence of the Kazhdan's orthogonality conjecture which was settled by the first named author and Binyong Sun.
\end{abstract}
%
%%%%%%%%%%%%%%%%%%%%%%%%%%%%%%%%%%%%%%%%%%%%%%%%%%%%%%%%%%%%%%%%%%%%
%
\section{Introduction}
%
%%%%%%%%%%%%%%%%%%%%%%%%%%%%%%%%%%%%%%%%%%%%%%%%%%%%%%%%%%%%%%%%%%%%
%
In order to formally factorise an operator for Minkowski space, Dirac introduced a first-order differential operator which plays a significant role in the study of elementary particles. The Dirac operator and various analogues have remarkable impact in the development of physics and mathematics. The Dirac operator in mathematics was first introduced by Parthasarathy \cite{Par} to construct discrete series representations for real reductive Lie groups. Vogan \cite{V2} proposed an algebraic version of Dirac operator and defined the Dirac cohomology for symmetric pairs. It turns out the Dirac cohomology of Harish-Chandra modules determine their infinitesimal characters. This was conjectured by Vogan \cite{V2} and proved by the first named author and Pand\v{z}i\'{c} \cite{HP1}. The general case was defined and proved by Kostant \cite{Ko1, Ko3} in the setting of so called cubic Dirac operator. It was discovered gradually that the Dirac cohomology involves deeply with several classical objects in representation theory, including the discrete series and branching laws \cite{HP2, HPZ}.

The Dirac cohomology also has an intrinsic relation with Kostant's $\fru$-cohomology. The two invariants are isomorphic up to a twist for unitary representations of Hermitian types \cite{HPR}. We \cite{HX} obtained a similar isomorphism for simple highest weight modules in Kostant's setting and calculated explicitly the Dirac cohomology in this case. The highest weight modules are fundamental objects in the BGG category $\caO$ which was introduced by Bernstein-Gelfand-Gelfand \cite{BGG}. The category of all weight modules with finite weight multiplicities is a natural generalization of the category $\caO$. Our first motivation of this work is to calculate the Dirac cohomology of such weight modules. The study of these weight modules already led to far-reaching work of Benkart, Britten, Fernando, Futorny, Lemire, and others \cite{BBL, BL1, BL2, F, Fu}. It was proved by Fernando that every simple module with finite-dimensional weight spaces over a reductive Lie algebra $\frg$ is either cuspidal or parabolic induced from a simple cuspidal module over a reductive subalgebra of $\frg$. With this, we show that the Dirac cohomology of simple weight module is vanished unless it is a highest weight module. This completes the calculation of Dirac cohomology for simple weight modules.

Our second motivation of this work comes from the representation theory of reductive algebraic groups. This concerns two natural pairings between representations of a reductive algebraic group. The first one is the Euler-Poincar\'{e} pairing algebraically defined to be the alternating sum of the corresponding Extension groups. The second one is the elliptic pairing of analytic nature. It was conjectured by Kazhdan that these two pairings are equal. The case when the ground field is $p$-adic was proved independently by Schneider-Stuhler \cite{SS} and Bezrukavnikov \cite{Be}. The case of real reductive Lie group was proved by the first named author and Binyong Sun \cite{HS}. Let $G$ be a real reductive group in Harish-Chandra's class with corresponding complexified Lie algebra $\frg$, and let $K$ be a maximal compact subgroup of $G$. The Dirac index of every finite-length $(\frg, K)$-module is a virtual $K$-module associated with its Dirac cohomology. If $G$ is connected and $\mathrm{rank}G=\mathrm{rank}K$, the Dirac index pairing of two finite-length $(\frg, K)$-modules with infinitesimal characters agrees with their elliptic pairing. This was prove by the first named author \cite{H} and Renard \cite{R}. Renard also conjectures that the Dirac index pairing coincides with the Euler-Poincar\'{e} pairing in this case which is now a consequence of results in \cite{HS}.

Although the elliptic pairing is not applicable for weight modules, we can still define the Euler-Poincar\'{e} pairing and Dirac index pairing. It is a natural conjecture that there is a similar correspondence between these two pairings. It turns out that the proof of this conjecture relies heavily on a full classification of simple weight modules which was carried out by Mathieu \cite{Mat}. The so-called Mathieu's twisting functors plays a crucial role in this classification and in our proof. Our proof also depends on the results achieved in \cite{BKLM, CF, GS1, GS2} about the structure and properties of weight modules.

The paper is organized as follows. In section 2, we give the definition and basic properties of Dirac cohomology and the spin index. In section 3, we obtain the vinish theorem of Dirac cohomology for non highest weight modules. In section 4, we obtain equations of extension groups for weight modules by Mathieu's twisting functor. This result plays an essential role in the proof of our main theorem. In section 5, we give an inductive relation about EP pairings of parabolic induced modules and dual modules. In section 6, we prove our main result about EP pairing and Dirac index pairing.

%
%%%%%%%%%%%%%%%%%%%%%%%%%%%%%%%%%%%%%%%%%%%%%%%%%%%%%%%%%%%%%%%%%%%%
%
\section{Dirac cohomology and the spin index}
%
%%%%%%%%%%%%%%%%%%%%%%%%%%%%%%%%%%%%%%%%%%%%%%%%%%%%%%%%%%%%%%%%%%%%
%

Let $\frg$ be a reductive Lie algebra over $\bbC$ with Cartan subalgebra $\frh$. Let $\Delta\subset\frh^*$ be the root system of $(\frg, \frh)$, with corresponding root lattice $Q\subset\frh^*$. Let $\frg_\alpha$ be the space of root vectors associated with $\alpha\in\Delta$. We can choose $e_\alpha\in\frg_\alpha$ and $f_\alpha\in\frg_{-\alpha}$ such that $[e_\alpha, f_\alpha]=h_\alpha$, where $h_\alpha$ is the coroot corresponding to $\alpha\in\Delta$. Denote by $(,)$ the Killing form of $\frg$. Then $(e_\alpha, f_\alpha)=1$.

\subsection{Dirac cohomology} Now we give the setting of Dirac operators and Dirac cohomology in this paper, referring to \cite{HPR} for full details. Let $\frp$ be a parabolic subalgebra of $\frg$, with nilpotent radical $\fru$ and Levi subalgebra $\frl$ containing $\frh$. Denote by $\bar\fru$ the dual space of $\fru$ such that $\frg=\fru\oplus\frl\oplus\bar\fru$ and denote by $\frs$ the direct sum $\fru\oplus\bar\fru$. The Clifford algebra $C(\frs)$ of $\frs$ is defined by the relations
\begin{equation}\label{Crelation}
vw+wv=2(v,w),
\end{equation}
for any $v, w\in\frs$. Let $S_{\frg, \frl}$ be the spin representation of $C(\frs)$. Given a $\frg$-module $M$, there is a natural action of $U(\frg)\otimes C(\frs)$ on $M\otimes S_{\frg, \frl}$, where $U(\frg)$ is the universal enveloping algebra of $\frg$. It is natural to identify $\bar\fru$ with $\fru^*$ via $(,)$. Choose a basis $u_1, u_2, \ldots, u_n$ of $\fru$. Let $u_1^*, \ldots, u_n^*$ be the dual basis of $\bar\fru$ such that $(u_i, u_j^*)=\delta_{ij}$. Let
\[
C:=\sum_{i=1}^nu_i^*\otimes u_i-\frac{1}{4}\sum_{1\leq i,j\leq n}1\otimes u_iu_j[u_i^*,
u_j^*]
\]
and
\[
C^-:=\sum_{i=1}^nu_i\otimes u_i^*-\frac{1}{4}\sum_{1\leq i,j\leq n}1\otimes
u_i^*u_j^*[u_i, u_j]
\]
be two elements in $U(\frg)\otimes C(\frs)$. It was showed in \cite{HPR} that $C$ and $C^-$ are $\frl$-invariant and independent of the chosen $u_i$. Furthermore,
\[
C^2=0\quad\mbox{and}\quad(C^-)^2=0.
\]
By the invariance of $(,)$ and the identification $\frso(\frs)\simeq\bigwedge^2\frs$, there exists an embedding $\frl\ra\frso(\frs)\ra C(\frs)$, which gives $S_{\frg, \frl}$ a structure of $\frl$-module. Kostant \cite{Ko2} has pointed out that $S_{\frg, \frl}\simeq\bigwedge\fru\otimes\mathbb{C}_{\rho(\bar\fru)}$ as $\frl$-modules, where $\rho(\bar\fru)$ is the half sum of roots for $\bar\fru$ and $\mathbb{C}_{\rho(\bar\fru)}$ is a one-dimensional $\frl$-module with weight $\rho(\bar\fru)$. On the other hand, we have the following identifications:
\begin{equation}\label{coid1}
\textstyle\Hom(\bigwedge^p\bar\fru, M)\simeq\Hom((\bigwedge^p\fru)^*, M)\simeq M\otimes \bigwedge^p\fru.
\end{equation}
With these preliminaries out of the way, the differential operator $C$ on $M\otimes S_{\frg, \frl}$ corresponds to the coboundary operator $d$ on $\Hom(\bigwedge^p\bar\fru, M)$ which defines the $\bar\fru$-cohomology of $M$. In particular, there is a natural $\frl$-module isomorphism
\begin{equation}\label{coid2}
H(C, M\otimes S_{\frg, \frl})\simeq H^*(\bar\fru, M)\otimes\mathbb{C}_{\rho(\bar\fru)}.
\end{equation}
Similarly, the differential operator $C^-$ on $M\otimes S_{\frg, \frl}$ corresponds to $-2$ times the differential operator $\pt$ on $M\otimes\bigwedge^p\fru$ which defines the $\fru$-homology of $M$ and thus gives rise to a natural $\frl$-module isomorphism
\begin{equation}\label{coid22}
H(C^-, M\otimes S_{\frg, \frl})\simeq H_*(\fru, M)\otimes\mathbb{C}_{\rho(\bar\fru)}.
\end{equation}

The Dirac operator associated with the pair $(\frg,\frl)$ is defined by $D(\frg,\frl)=C+C^-$. Consider the action of $D(\frg,\frl)\in U(\frg)\otimes C(\frs)$ on $M\otimes S_{\frg, \frl}$:
\[
D(\frg, \frl):M\otimes S_{\frg, \frl}\ra M\otimes S_{\frg, \frl}.
\]
We call the $\frl$-module
\[
H_{D(\frg,\frl)}(M):=\Ker D(\frg, \frl)/\Ker D(\frg, \frl)\cap \Imm D(\frg, \frl)
\]
the Dirac cohomology of $M$ (with respect to $D(\frg, \frl)$). Since $D(\frg,\frl)$ is an odd operator, the $\bbZ_2$-grading of $S_{\frg, \frl}=S^+_{\frg, \frl}\oplus S^-_{\frg, \frl}$ gives a $\bbZ_2$-grading
\[
H_{D(\frg,\frl)}(M)=H^+_{D(\frg,\frl)}(M)\oplus H^-_{D(\frg,\frl)}(M).
\]

In general, let $\fra$ be a finite dimensional Lie algebra over $\bbC$, and let $\frt$ be a subalgebra of $\fra$. For a $\frt$-module $M$, we say $v\in M$ is \emph{$\frt$-finite} if $U(\frt)v$ is a finite-dimensional subspace of $M$. The $\frt$-module $M$ is \emph{locally $\frt$-finite} if every element in $M$ is $\frt$-finite. The $\fra$-module $M$ is called an $(\fra, \frt)$-module if $M$ is locally $\frt$-finite and semisimple when restrict to $\frt$. In fact, we only need to require $U(\frt)v$ to be finite-dimensional and semisimple for any $v\in M$ (e.g., \cite{L}, Chapter \Rmnum{17}, \S 2).  We say an $(\fra, \frt)$-module $M$ is \emph{admissible} if the multiplicity of each irreducible $\frt$-module in $M$ is finite. For convenience, we denote by $\mathcal{M}(\fra, \frt)$ the category of finitely generated admissible $(\fra, \frt)$-modules.

In particular, any $(\frg,\frh)$-module is called a \emph{weight module}. It is evident that any $(\frg, \frl)$-module is also a $(\frg, \frh)$-module and hence a weight module. If $M$ is an $\frh$-module and $\mu\in\frh^*$, the weight space of weight $\mu$ is
\[
M_\mu:=\{v\in M\ |\ hv=\mu(h)v, h\in\frh\}.
\]
The set of weights $\mu\in\frh^*$ such that $M_\mu\neq0$ is called the \emph{support} of $M$, denoted $\mathrm{supp} M$. It was stated in \cite{HX} (Proposition 4.8) that there exists an injective $\frl$-module homomorphism between Dirac chomology (relative to $D(\frg, \frl)$) and $\bar\fru$-cohomology (resp. $\fru$-homology) for modules with infinitesimal characters in the generalized BGG category $\caO^\frp$. This result can be easily generalized to the case of admissible $(\frg,\frl)$-modules by the same kind of reasoning.

\begin{prop}\label{inj1}
Let $M$ be an admissible $(\frg,\frl)$-module with infinitesimal character. Then there are injective $\frl$-module homomorphisms
\begin{equation*}
H_{D(\frg,\frl)}(M)\rightarrow H^*(\bar\fru, M)\otimes \mathbb{C}_{\rho(\bar\fru)}\quad\mbox{and}\quad H_{D(\frg,\frl)}(M)\rightarrow H_*(\fru, M)\otimes \mathbb{C}_{\rho(\bar\fru)}.
\end{equation*}
\end{prop}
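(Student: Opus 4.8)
The plan is to carry over the argument of \cite{HX}, Proposition~4.8, replacing the generalized BGG category $\caO^\frp$ hypotheses by admissibility together with the existence of an infinitesimal character. The key input is Kostant's formula for the square of the cubic Dirac operator (see \cite{HPR}): writing $\frl_\Delta$ for the diagonal copy of $\frl$ inside $U(\frg)\otimes C(\frs)$, embedded by $X\mapsto X\otimes 1+1\otimes\alpha(X)$ with $\alpha\colon\frl\to\frso(\frs)\to C(\frs)$, one has
\begin{equation*}
D(\frg,\frl)^2=-\Omega_\frg\otimes 1+\Omega_{\frl_\Delta}+c\,(1\otimes 1),
\end{equation*}
where $\Omega_\frg$ and $\Omega_{\frl_\Delta}$ are the respective Casimir elements and $c=c(\frg,\frl)$ is a constant. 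Since $M$ has an infinitesimal character, $\Omega_\frg$ acts on $M\otimes S_{\frg,\frl}$ by a scalar; since $M$ is a locally finite semisimple $\frl$-module and $S_{\frg,\frl}$ is finite dimensional, $M\otimes S_{\frg,\frl}$ is a locally finite semisimple $\frl_\Delta$-module, so $\Omega_{\frl_\Delta}$, and hence $D(\frg,\frl)^2$, acts semisimply. Writing $M\otimes S_{\frg,\frl}=\bigoplus_\lambda W_\lambda$ with $W_\lambda=\Ker\bigl(D(\frg,\frl)^2-\lambda\bigr)$, and using that $\Omega_\frg\otimes 1$ is central while $C$ and $C^-$ are $\frl$-invariant, I note that $C$, $C^-$ and $D(\frg,\frl)$ all commute with $D(\frg,\frl)^2$ and therefore preserve each $W_\lambda$.

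Next I would eliminate the nonzero eigenspaces. On $W_\lambda$ with $\lambda\neq0$ the identity $CC^-+C^-C=D(\frg,\frl)^2=\lambda\cdot\id$ shows that $\lambda^{-1}C^-$ is a contracting homotopy for $C$ and $\lambda^{-1}C$ one for $C^-$, while $\lambda^{-1}D(\frg,\frl)$ inverts $D(\frg,\frl)$; thus each such $W_\lambda$ is acyclic for $C$, for $C^-$ and for $D(\frg,\frl)$. Hence $\Imm D(\frg,\frl)^2=\bigoplus_{\lambda\neq0}W_\lambda$ contributes nothing, and with $W_0=\Ker D(\frg,\frl)^2$ (on which $D(\frg,\frl)^2=0$) one gets natural $\frl$-module isomorphisms $H_{D(\frg,\frl)}(M)\cong H\bigl(D(\frg,\frl),W_0\bigr)$, $H(C,M\otimes S_{\frg,\frl})\cong H(C,W_0)$ and $H(C^-,M\otimes S_{\frg,\frl})\cong H(C^-,W_0)$. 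By \eqref{coid2} and \eqref{coid22} the proposition is then reduced to producing injective $\frl$-module maps $H\bigl(D(\frg,\frl),W_0\bigr)\hookrightarrow H(C,W_0)$ and $H\bigl(D(\frg,\frl),W_0\bigr)\hookrightarrow H(C^-,W_0)$.

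For the first map I would exploit the grading $W_0=\bigoplus_{p}W_0^p$ induced by the grading of $M\otimes\bigwedge^{\bullet}\fru$, which $C$ and $C^-$ respect with degrees $+1$ and $-1$ respectively, and on which $C^2=(C^-)^2=0$ and $CC^-+C^-C=0$. Given a nonzero class in $H\bigl(D(\frg,\frl),W_0\bigr)$, represent it by a cocycle $v=\sum_{p\le q}v_p$ with $v_q\neq0$ and $q$ minimal; the degree $q+1$ part of $D(\frg,\frl)v=0$ reads $Cv_q=0$, so $v_q\in\Ker C$, and were $v_q=Cx$ for some $x\in W_0^{q-1}$, then $v-D(\frg,\frl)x$ would represent the same class with strictly smaller top degree, a contradiction; hence $v_q$ represents a nonzero class in $H(C,W_0)$, and $[v]\mapsto[v_q]$ is the required map. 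The second injection is obtained symmetrically, using the lowest-degree component of a representative whose bottom degree is as large as possible and interchanging the roles of $C$ and $C^-$.

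The delicate point — and the one I would simply transcribe from \cite{HX}, Proposition~4.8 — is to verify that $[v]\mapsto[v_q]$ is well defined on cohomology and $\frl$-equivariant, i.e. that any two minimal-top-degree representatives of a given class have top components differing by a $C$-coboundary, and that the construction intertwines the $\frl_\Delta$-actions; granting this, linearity and injectivity follow from the argument just sketched. The only genuinely new verification compared with \cite{HX} is that ``$M$ admissible with an infinitesimal character'' furnishes what membership in $\caO^\frp$ furnished in the first step above — namely the semisimplicity of $D(\frg,\frl)^2$ on $M\otimes S_{\frg,\frl}$ — and this, as noted, is immediate from the $(\frg,\frl)$-module axioms.
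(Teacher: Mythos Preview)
Your proposal is correct and is precisely the approach the paper intends: the paper does not give an independent argument but simply remarks that Proposition~4.8 of \cite{HX} ``can be easily generalized to the case of admissible $(\frg,\frl)$-modules by the same kind of reasoning,'' and what you have written is exactly that generalization, with the correct identification of the single new point (semisimplicity of $D(\frg,\frl)^2$ from admissibility plus the infinitesimal character).
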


Since $H_{D(\frg,\frl)}(M)$ does not depends on the choices of $\fru$, the equations holds for any $\fru$ such that $\frl\oplus\fru$ is a parabolic subalgebra. So we also have

\begin{prop}\label{inj2}
Let $M$ be an admissible $(\frg,\frl)$-module with infinitesimal character. Then there are injective $\frl$-module homomorphisms
\begin{equation*}
H_{D(\frg,\frl)}(M)\rightarrow H^*(\fru, M)\otimes \mathbb{C}_{\rho(\fru)}\quad\mbox{and}\quad H_{D(\frg,\frl)}(M)\rightarrow H_*(\bar\fru, M)\otimes \mathbb{C}_{\rho(\fru)},
\end{equation*}
where $\rho(\fru)$ is the half sum of roots for $\fru$.
\end{prop}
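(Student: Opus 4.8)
The plan is to deduce Proposition~\ref{inj2} from Proposition~\ref{inj1} by applying the latter to the opposite parabolic subalgebra. First I would observe that the Dirac operator $D(\frg,\frl)=C+C^-$, and therefore the Dirac cohomology $H_{D(\frg,\frl)}(M)$ together with its $\bbZ_2$-grading, depends only on the pair $(\frg,\frl)$ and not on the particular choice of nilpotent radical $\fru$ complementary to $\frl$ in $\frg$. Concretely, replacing the parabolic $\frp=\frl\oplus\fru$ by the opposite parabolic $\bar\frp=\frl\oplus\bar\fru$ leaves $\frs=\fru\oplus\bar\fru$, the Clifford algebra $C(\frs)$, and the spin module $S_{\frg,\frl}$ unchanged, while in the defining formulas for $C$ and $C^-$ it interchanges the roles of a basis $u_1,\dots,u_n$ of $\fru$ and its dual basis $u_1^*,\dots,u_n^*$ of $\bar\fru$; this interchanges $C$ and $C^-$, so that $D(\frg,\frl)$ is literally the same operator. (This is the content of the sentence preceding the proposition, and it is already implicit in the notation $D(\frg,\frl)$.)

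Granting this, I would apply Proposition~\ref{inj1} verbatim with $\frp$ replaced by $\bar\frp$. The nilpotent radical of $\bar\frp$ is $\bar\fru$ and its dual is $\fru$, so the half sum of roots attached to the latter is exactly $\rho(\fru)$; moreover $M$ remains an admissible $(\frg,\frl)$-module with infinitesimal character since these hypotheses make no reference to $\fru$. Proposition~\ref{inj1} then produces injective $\frl$-module homomorphisms $H_{D(\frg,\frl)}(M)\ra H^*(\fru,M)\otimes\mathbb{C}_{\rho(\fru)}$ and $H_{D(\frg,\frl)}(M)\ra H_*(\bar\fru,M)\otimes\mathbb{C}_{\rho(\fru)}$, where on the left I have used the first step to identify the Dirac cohomology computed from $\bar\frp$ with $H_{D(\frg,\frl)}(M)$. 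This is precisely the assertion of Proposition~\ref{inj2}.

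The only step that requires genuine care --- and the one I would expect to be the main (if minor) obstacle --- is verifying that all the structures entering Proposition~\ref{inj1} transform compatibly under the interchange $\fru\leftrightarrow\bar\fru$: the $\frl$-module structure on $S_{\frg,\frl}$ coming from $\frl\ra\frso(\frs)\ra C(\frs)$, the identifications \eqref{coid2} and \eqref{coid22}, Kostant's description $S_{\frg,\frl}\simeq\bigwedge\fru\otimes\mathbb{C}_{\rho(\bar\fru)}$, and the $\bbZ_2$-grading, so that no spurious sign or twist appears beyond the expected replacement of $\rho(\bar\fru)$ by $\rho(\fru)$. Since this is the same bookkeeping already performed in \cite{HPR} and \cite{HX}, I do not anticipate any real difficulty; the argument is essentially formal once the symmetry $\fru\leftrightarrow\bar\fru$ of the Dirac operator is noted.
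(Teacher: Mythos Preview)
Your argument is correct and is exactly the paper's approach: the authors simply remark that $H_{D(\frg,\frl)}(M)$ is independent of the choice of $\fru$ with $\frl\oplus\fru$ parabolic, and then apply Proposition~\ref{inj1} with $\fru$ and $\bar\fru$ interchanged. Your more detailed verification that swapping $\fru\leftrightarrow\bar\fru$ interchanges $C$ and $C^-$ and leaves $D(\frg,\frl)$ unchanged is precisely the content of that independence statement.
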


\subsection{Spin index} In this paper, a virtual $\frl$-module is an element in the Grothendieck group of the category of admissible $(\frl, \frl)$-modules. The $(\frg, \frl)$-\emph{spin index} of an admissible $(\frg, \frl)$-module $M$ is by definition the virtual $\frl$-module
\begin{equation}\label{index1}
I_{\frg,\frl}(M):=M\otimes S^+_{\frg, \frl}-M\otimes S^-_{\frg, \frl}.
\end{equation}

\begin{prop}[\cite{PS}, Proposition 1.5]\label{index2}
Let $M$ be an admissible $(\frg,\frl)$-module with infinitesimal character. Then
\begin{equation}\label{index21}
I_{\frg,\frl}(M)=H^+_{D(\frg,\frl)}(M)-H^-_{D(\frg,\frl)}(M)
\end{equation}
as virtual $\frl$-modules.
\end{prop}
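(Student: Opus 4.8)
The plan is to deduce this from the elementary ``index equals Euler characteristic'' principle for $\bbZ_2$-graded complexes, the hypothesis of an infinitesimal character being used only to force $D(\frg,\frl)^2$ to act by a scalar on each $\frl$-isotypic component of $M\otimes S_{\frg,\frl}$. Write $D=D(\frg,\frl)$. Since $C^2=0$ and $(C^-)^2=0$ we have $D^2=CC^-+C^-C$, and by Kostant's formula for the square of the cubic Dirac operator (see \cite{HPR}, and \cite{Ko1, Ko3}) this element is $\frl$-invariant and has the shape
\[
D^2=-\Omega_\frg\otimes 1+\Omega_{\frl_\Delta}+c,\qquad c\in\bbC,
\]
where $\Omega_\frg$ is the Casimir element of $\frg$ and $\Omega_{\frl_\Delta}$ is the image of the Casimir element of $\frl$ under the diagonal map $\frl\ra U(\frg)\otimes C(\frs)$, $X\mapsto X\otimes 1+1\otimes\alpha(X)$, with $\alpha$ induced by the embedding $\frl\ra\frso(\frs)\ra C(\frs)$ introduced above. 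When $M$ has an infinitesimal character, $\Omega_\frg$ acts on $M$ by a scalar; and $\Omega_{\frl_\Delta}$, being the Casimir of the diagonal $\frl$-action on $M\otimes S_{\frg,\frl}$, acts by a scalar on every $\frl$-isotypic component. Hence $D^2$ acts by a scalar $c_\gamma$ on each $\frl$-isotypic component $N_\gamma$ of the $\frl$-module $M\otimes S_{\frg,\frl}$.

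Next I would set up the decomposition. Since $M$ is locally $\frl$-finite and semisimple over $\frl$ and $S_{\frg,\frl}$ is finite dimensional, so is $M\otimes S_{\frg,\frl}$, and admissibility of $M$ makes each $N_\gamma$ finite dimensional. Being $\frl$-invariant, $D$ preserves the decomposition $M\otimes S_{\frg,\frl}=\bigoplus_\gamma N_\gamma$; and since $\frl$ acts on $S_{\frg,\frl}$ through even elements of $C(\frs)$, each $N_\gamma=N_\gamma^+\oplus N_\gamma^-$ inherits the $\bbZ_2$-grading from $S_{\frg,\frl}=S^+_{\frg,\frl}\oplus S^-_{\frg,\frl}$, and $D$ is odd. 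Thus $H_D(M)=\bigoplus_\gamma H_D(N_\gamma)$ as $\bbZ_2$-graded $\frl$-modules, and $I_{\frg,\frl}(M)=\sum_\gamma\bigl([N_\gamma^+]-[N_\gamma^-]\bigr)$, the sum being locally finite because each irreducible $\frl$-type lies in exactly one $N_\gamma$. Now fix $\gamma$. If $c_\gamma\neq0$, then $D^2=c_\gamma\id$ on $N_\gamma$ forces both $D\colon N_\gamma^+\ra N_\gamma^-$ and $D\colon N_\gamma^-\ra N_\gamma^+$ to be invertible, giving an $\frl$-module isomorphism $N_\gamma^+\cong N_\gamma^-$; hence $H_D(N_\gamma)=0$ and $[N_\gamma^+]-[N_\gamma^-]=0$. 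If $c_\gamma=0$, then $D^2=0$ on $N_\gamma$, so $D$ turns $N_\gamma=N_\gamma^+\oplus N_\gamma^-$ into a $\bbZ_2$-graded complex of finite-dimensional $\frl$-modules, and the standard Euler-Poincar\'{e} identity for such a complex gives
\[
[N_\gamma^+]-[N_\gamma^-]=[H^+_D(N_\gamma)]-[H^-_D(N_\gamma)].
\]

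Finally I would sum over all $\gamma$: the terms with $c_\gamma\neq0$ contribute $0$ on both sides, so with the two decompositions above this yields
\[
I_{\frg,\frl}(M)=\sum_\gamma\bigl([N_\gamma^+]-[N_\gamma^-]\bigr)=\sum_\gamma\bigl([H^+_D(N_\gamma)]-[H^-_D(N_\gamma)]\bigr)=H^+_{D(\frg,\frl)}(M)-H^-_{D(\frg,\frl)}(M),
\]
which is \eqref{index21}. The only non-formal ingredient is the description of $D^2$ in the first step; everything depends on $D(\frg,\frl)^2$ differing from the diagonally embedded Casimir of $\frl$ by nothing more than a central element of $U(\frg)$ and a constant, and this is exactly the point at which the infinitesimal-character hypothesis enters. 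The remainder is the formal Hopf-Lefschetz bookkeeping, and admissibility of $M$ is what guarantees that all the virtual modules involved are genuine elements of the Grothendieck group of admissible $(\frl,\frl)$-modules.
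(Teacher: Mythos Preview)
Your argument is correct, and it is essentially the standard proof. The paper itself does not supply a proof of this proposition but cites it directly from \cite{PS}, Proposition~1.5; the argument there proceeds exactly as you outline, using Kostant's formula $D^2=-\Omega_\frg\otimes 1+\Omega_{\frl_\Delta}+\text{const}$ to reduce to the finite-dimensional $\frl$-isotypic pieces and then applying the elementary Euler--Poincar\'e identity on each.
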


Usually, the right side of (\ref{index21}) is called the $(\frg, \frl)$-Dirac index of $M$. If we let $M$ be a virtual $\frh$-module in the definition of $I_{\frg,\frl}(M)$ in (\ref{index1}), then $I_{\frg,\frl}(M)$ is also a virtual $\frh$-module. We have the following lemma.

\begin{lemma}\label{index3}
Let $M$ be an admissible $(\frg,\frh)$-module. Then
\[
I_{\frg,\frl}(M)=\sum_{i\in\bbN}(-1)^iH^i(\bar\fru,M)\otimes\bbC_{\rho(\bar\fru)}=\sum_{i\in\bbN}(-1)^iH_i(\fru,M)\otimes\bbC_{\rho(\bar\fru)}.
\]
as virtual $\frh$-modules, where $\bbN$ is the set of nonnegative integers.
\end{lemma}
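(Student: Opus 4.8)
The plan is to reduce the statement to an Euler--Poincar\'{e} (Euler characteristic) computation carried out weight space by weight space, so that no infinitesimal character is needed. First I would record the effect of the spin module in the Grothendieck group. By Kostant's isomorphism $S_{\frg,\frl}\simeq\bigwedge\fru\otimes\bbC_{\rho(\bar\fru)}$ of $\frl$-modules, hence of $\frh$-modules, together with the fact that the $\bbZ_2$-grading $S_{\frg,\frl}=S^+_{\frg,\frl}\oplus S^-_{\frg,\frl}$ matches the parity of the exterior degree, one has
\[
S^+_{\frg,\frl}-S^-_{\frg,\frl}=\sum_{p\in\bbN}(-1)^p\,{\textstyle\bigwedge^{p}\fru}\otimes\bbC_{\rho(\bar\fru)}
\]
as virtual $\frh$-modules. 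Since each $S^\pm_{\frg,\frl}$ is finite dimensional and $M$ has finite-dimensional weight spaces, every $M\otimes\bigwedge^{p}\fru$ is again an admissible $(\frh,\frh)$-module (its $\lambda$-weight space is the finite sum $\bigoplus_\nu M_{\lambda-\nu}\otimes(\bigwedge^{p}\fru)_\nu$), and tensoring by $M$ is additive on Grothendieck groups, so
\[
I_{\frg,\frl}(M)=M\otimes S^+_{\frg,\frl}-M\otimes S^-_{\frg,\frl}=\Big(\sum_{p\in\bbN}(-1)^p\,M\otimes{\textstyle\bigwedge^{p}\fru}\Big)\otimes\bbC_{\rho(\bar\fru)}
\]
as virtual $\frh$-modules.

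Next I would bring in the Chevalley--Eilenberg complexes. The $\bar\fru$-cohomology $H^*(\bar\fru,M)$ is the cohomology of the complex whose $p$-th term is $\Hom(\bigwedge^{p}\bar\fru,M)$, which by the $\frh$-equivariant identification (\ref{coid1}) is $M\otimes\bigwedge^{p}\fru$, with $\frh$-equivariant differentials; and $H_*(\fru,M)$ is the homology of the complex with the same graded $\frh$-module $M\otimes\bigwedge^{\bullet}\fru$ and $\frh$-equivariant differentials running in the opposite direction. Both complexes are bounded ($0\le p\le\dim\fru$) with admissible terms. For a fixed weight $\lambda$, restricting to $\lambda$-weight spaces gives a bounded complex of finite-dimensional vector spaces, for which the alternating sum of dimensions of the terms equals the alternating sum of dimensions of its (co)homology; summing over $\lambda$ in the Grothendieck group of admissible $(\frh,\frh)$-modules gives
\[
\sum_{p\in\bbN}(-1)^p\,M\otimes{\textstyle\bigwedge^{p}\fru}=\sum_{i\in\bbN}(-1)^iH^i(\bar\fru,M)=\sum_{i\in\bbN}(-1)^iH_i(\fru,M).
\]
Tensoring with $\bbC_{\rho(\bar\fru)}$ and combining with the previous display yields the lemma.

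I do not anticipate a real obstacle: the content is essentially the invariance of the Euler characteristic under passing to (co)homology, and the only things to check carefully are that all the modules in sight are admissible $(\frh,\frh)$-modules (so the identities make sense in the stated Grothendieck group) and that Kostant's isomorphism, its compatibility with the $\bbZ_2$-grading, and the identification (\ref{coid1}) are used $\frh$-equivariantly rather than merely as vector-space isomorphisms. It is worth emphasizing that, in contrast to Proposition \ref{index2}, no infinitesimal character hypothesis enters here, because we compute only the spin index and never the Dirac index itself.
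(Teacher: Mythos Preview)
Your proof is correct and follows essentially the same approach as the paper, which simply says the equations follow from the identification $S_{\frg,\frl}\simeq\bigwedge\fru\otimes\bbC_{\rho(\bar\fru)}$ and the Euler characteristic. You have spelled out in detail precisely these two ingredients --- Kostant's isomorphism (with the $\bbZ_2$-grading matching exterior parity) and the weight-space-by-weight-space Euler characteristic argument --- so your argument is a careful expansion of the paper's one-line proof rather than a different route.
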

\begin{proof}
The equations follow from the identification $S_{\frg, \frl}\simeq\bigwedge\fru\otimes\mathbb{C}_{\rho(\bar\fru)}$ and the Euler characteristic.
\end{proof}

The spin index with respect to $(\frg, \frl)$ is independent of the nilpotent radical $\fru$. So we also have

\begin{lemma}\label{index32}
Let $M$ be an admissible $(\frg,\frh)$-module. Then
\[
I_{\frg,\frl}(M)=\sum_{i\in\bbN}(-1)^iH^i(\fru,M)\otimes\bbC_{\rho(\fru)}=\sum_{i\in\bbN}(-1)^iH_i(\bar\fru,M)\otimes\bbC_{\rho(\fru)}.
\]
as virtual $\frh$-modules.
\end{lemma}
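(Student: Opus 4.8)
The plan is to read off Lemma~\ref{index32} from Lemma~\ref{index3} by passing to the opposite parabolic. As already observed, the spin module $S_{\frg,\frl}$ — and hence the spin index $I_{\frg,\frl}$ — depends only on the pair $(\frg,\frl)$ and not on the choice of nilpotent radical $\fru$: the orthogonal $\frl$-space $\frs$ is the span of all root spaces of $\frg$ not lying in $\frl$, the Clifford algebra $C(\frs)$ and its irreducible module $S_{\frg,\frl}$ are determined by $\frs$ alone, the $\bbZ_2$-grading $S_{\frg,\frl}=S^+_{\frg,\frl}\oplus S^-_{\frg,\frl}$ is induced by the even/odd grading of $C(\frs)$, and the $\frl$-module structure arises from the intrinsic embedding $\frl\hookrightarrow\frso(\frs)\hookrightarrow C(\frs)$. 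In particular $I_{\frg,\frl}(M)$ may be computed equally well from the opposite parabolic $\bar\frp=\frl\oplus\bar\fru$, whose nilpotent radical is $\bar\fru$, with dual $\fru$ with respect to $(,)$.

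So I would simply apply Lemma~\ref{index3} to $\bar\frp$. For that parabolic Kostant's identification reads $S_{\frg,\frl}\simeq\bigwedge\bar\fru\otimes\bbC_{\rho(\fru)}$ as $\frl$-modules, where $\rho(\fru)$, the half sum of roots for $\fru$, now plays the role that $\rho(\bar\fru)$ had in Lemma~\ref{index3}. Taking the Euler characteristic of the Chevalley--Eilenberg complexes computing $H^*(\fru,M)$ and $H_*(\bar\fru,M)$, exactly as in the proof of Lemma~\ref{index3}, then yields
\[
I_{\frg,\frl}(M)=\sum_{i\in\bbN}(-1)^iH^i(\fru,M)\otimes\bbC_{\rho(\fru)}=\sum_{i\in\bbN}(-1)^iH_i(\bar\fru,M)\otimes\bbC_{\rho(\fru)},
\]
which is the assertion.

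There is no essential difficulty here; the lemma is a formal consequence of Lemma~\ref{index3} and the independence of the spin index from $\fru$. The one point I would be careful about is the $\bbZ_2$-grading: I must make sure that reversing the polarization $\frs=\fru\oplus\bar\fru$ does not interchange $S^+_{\frg,\frl}$ and $S^-_{\frg,\frl}$ and thereby flip the overall sign. This is settled by describing the grading intrinsically — through the action of the even versus the odd part of $C(\frs)$ — and checking that, under Kostant's identification, this splitting matches $\bigwedge^{\mathrm{even}}$ against $\bigwedge^{\mathrm{odd}}$ consistently for $\fru$ and for $\bar\fru$; this is the same parity bookkeeping already implicit in Lemma~\ref{index3}, and once it is in place the computation above goes through verbatim.
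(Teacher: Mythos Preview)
Your proposal is correct and follows exactly the paper's approach: the paper derives Lemma~\ref{index32} from Lemma~\ref{index3} in a single sentence, observing that the spin index with respect to $(\frg,\frl)$ is independent of the choice of nilpotent radical $\fru$, so one may swap $\fru$ and $\bar\fru$. Your additional care with the $\bbZ_2$-grading is a reasonable point to flag, though the paper does not address it explicitly.
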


Since $S_{\frg, \frl}\otimes S_{\frl, \frh}\simeq S_{\frg, \frh}$ as $\frh$-modules, one obtains the following result.

\begin{lemma}\label{index4}
Let $M$ be an admissible $(\frg,\frh)$-module. Then
\begin{equation*}
I_{\frg,\frh}(M)=I_{\frl,\frh}(I_{\frg,\frl}(M))
\end{equation*}
as virtual $\frh$-modules.
\end{lemma}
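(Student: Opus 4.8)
The plan is to deduce everything from the tensor decomposition of spin modules recorded just above the statement, $S_{\frg,\frl}\otimes S_{\frl,\frh}\simeq S_{\frg,\frh}$ as $\frh$-modules, together with the additivity of the spin index in its argument on the level of Grothendieck groups. First I would fix a triangular decomposition $\frl=\frn_\frl\oplus\frh\oplus\bar\frn_\frl$ of the Levi, chosen compatibly with $\frp=\frl\oplus\fru$, so that $\frn:=\fru\oplus\frn_\frl$ is the nilradical of a Borel subalgebra of $\frg$ containing $\frh$, $\bar\frn=\bar\fru\oplus\bar\frn_\frl$, and $\frs_{\frg,\frh}=\frn\oplus\bar\frn$ decomposes orthogonally (with respect to the Killing form) as $\frs_{\frg,\frl}\oplus\frs_{\frl,\frh}$. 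Then, using Kostant's identification $S_{\frg,\frl}\simeq\bigwedge\fru\otimes\bbC_{\rho(\bar\fru)}$ recalled above, and its analogues for $(\frl,\frh)$ and $(\frg,\frh)$, together with the graded isomorphism $\bigwedge(\fru\oplus\frn_\frl)\simeq\bigwedge\fru\otimes\bigwedge\frn_\frl$, I would upgrade the cited isomorphism to one of $\bbZ_2$-graded $\frh$-modules, the grading on $S_{\frg,\frl}\otimes S_{\frl,\frh}$ being the total grading of the tensor product. Explicitly this yields
\[
S^+_{\frg,\frh}\simeq (S^+_{\frg,\frl}\otimes S^+_{\frl,\frh})\oplus(S^-_{\frg,\frl}\otimes S^-_{\frl,\frh}),\qquad S^-_{\frg,\frh}\simeq (S^+_{\frg,\frl}\otimes S^-_{\frl,\frh})\oplus(S^-_{\frg,\frl}\otimes S^+_{\frl,\frh}).
\]

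Granting this, the lemma is a one-line computation. Tensoring with a fixed finite-dimensional $\frh$-module is exact, so the assignment $N\mapsto N\otimes S^+_{\frl,\frh}-N\otimes S^-_{\frl,\frh}$ is well defined and additive on virtual $\frh$-modules; hence
\[
I_{\frl,\frh}(I_{\frg,\frl}(M))=(M\otimes S^+_{\frg,\frl}-M\otimes S^-_{\frg,\frl})\otimes(S^+_{\frl,\frh}-S^-_{\frl,\frh}).
\]
Expanding the right-hand side, grouping the four terms by the total parity of the two spin factors, and then substituting the graded decomposition of $S^\pm_{\frg,\frh}$ above, collapses it to $M\otimes S^+_{\frg,\frh}-M\otimes S^-_{\frg,\frh}=I_{\frg,\frh}(M)$, which is the claim.

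The only genuine content — and thus the step I expect to be the main, if mild, obstacle — is the compatibility of the $\bbZ_2$-gradings in $S_{\frg,\frh}\simeq S_{\frg,\frl}\otimes S_{\frl,\frh}$; the bare $\frh$-module isomorphism is classical, but one must verify that the chirality involutions match under it. This is exactly what the exterior-algebra realization supplies: there the $\bbZ_2$-grading is the parity of the exterior degree, and exterior degree is additive under $\bigwedge(\fru\oplus\frn_\frl)\simeq\bigwedge\fru\otimes\bigwedge\frn_\frl$. (Equivalently, one may invoke the super-tensor-product decomposition $C(\frs_{\frg,\frh})\simeq C(\frs_{\frg,\frl})\,\widehat{\otimes}\,C(\frs_{\frl,\frh})$ of Clifford algebras.) Everything else is formal bookkeeping in the Grothendieck group.
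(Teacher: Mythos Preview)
Your proposal is correct and follows essentially the same approach as the paper: the paper's entire justification is the sentence ``Since $S_{\frg,\frl}\otimes S_{\frl,\frh}\simeq S_{\frg,\frh}$ as $\frh$-modules, one obtains the following result,'' and you have simply unpacked this, in particular making explicit the $\bbZ_2$-graded compatibility that the paper leaves implicit but that is indeed required.
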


%
%%%%%%%%%%%%%%%%%%%%%%%%%%%%%%%%%%%%%%%%%%%%%%%%%%%%%%%%%%%%%%%%%%%%
%
\section{Dirac cohomology of weight modules}
%
%%%%%%%%%%%%%%%%%%%%%%%%%%%%%%%%%%%%%%%%%%%%%%%%%%%%%%%%%%%%%%%%%%%%
%

In this section, we consider injective modules. At last, a vanished theorem of Dirac cohomology for weight modules is obtained.
\subsection{Injective envelop of $\Gamma$-bijective modules} Let $\fra$ be an $\frh$-invariant subalgebra of $\frg$ and $\frt$ a subalgebra of $\frh\cap\fra$. Denote by $\Delta(\fra)$ the set of roots for $\fra$ and by $Q(\fra)$ the root lattice of $\Delta(\fra)$. Following Mathieu's idea, we say $\Gamma\subseteq-\Delta(\fra)$ is a \emph{commuting set} if $\alpha+\beta\not\in\Delta$ for any $\alpha, \beta\in\Gamma$. Then $f_\alpha\in\fra$ for $\alpha\in\Gamma$. Let $F_\Gamma$ be the multiplicative subset of $U(\fra)$ generated by $(f_\alpha)_{\alpha\in\Gamma}$. Denote by $U_\Gamma$ be the localization of $U(\fra)$ relative to $F_\Gamma$. For any $\fra$-module $M$, denote by $M_\Gamma$ the localization $U_\Gamma\otimes_{U(\fra)}M$ of $M$. As in \cite{GS1}, we say that $M$ is \emph{$\Gamma$-injective} (resp. \emph{$\Gamma$-bijective}) if the action of $f_\alpha$ on $M$ is injective (resp. bijective) for all $\alpha\in\Gamma$. The following lemma is straightforward.

\begin{lemma}\label{bij}
If an $(\fra,\frt)$-module $M$ is $\Gamma$-injective, then $M\subseteq M_\Gamma$. If $M$ is  $\Gamma$-bijective, then $M\simeq M_\Gamma$ as $(\fra,\frt)$-modules.
\end{lemma}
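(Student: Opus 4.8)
The plan is to study the canonical $\fra$-module homomorphism $\phi\colon M\to M_\Gamma=U_\Gamma\otimes_{U(\fra)}M$, $m\mapsto 1\otimes m$, and to show that it is injective when $M$ is $\Gamma$-injective and bijective when $M$ is $\Gamma$-bijective. Since $\frt\subseteq\frh\cap\fra$, the map $\phi$ is automatically a homomorphism of $\frt$-modules, so once the injectivity (resp. bijectivity) of $\phi$ is established, both assertions follow in the category of $(\fra,\frt)$-modules after transporting the $(\fra,\frt)$-structure of $M$ along $\phi$.

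First I would recall the structure of $F_\Gamma$. Because $\Gamma$ is a commuting set, the root vectors $f_\alpha$ for $\alpha\in\Gamma$ pairwise commute (this is precisely what forces $[f_\alpha,f_\beta]\in\frg_{-\alpha-\beta}$ to vanish), so every element of $F_\Gamma$ is a finite product $\prod_\alpha f_\alpha^{n_\alpha}$, and $F_\Gamma$ is a left and right Ore subset of $U(\fra)$ --- a fact I would invoke from the setup recalled before the lemma, together with the standard consequence that $M_\Gamma$ is a well-defined $\fra$-module with $\Ker\phi$ equal to the $F_\Gamma$-torsion submodule $\{m\in M:\ sm=0\ \text{for some}\ s\in F_\Gamma\}$. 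I would also note that $M_\Gamma$ again belongs to the category of $(\fra,\frt)$-modules: since $f_\alpha$ shifts $\frh$-weights by $-\alpha$, the module $M_\Gamma$ inherits from $M$ an $\frh$-weight-space decomposition, hence local $\frt$-finiteness and $\frt$-semisimplicity; this is the one point I would cite rather than reprove.

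For the first statement, assume $M$ is $\Gamma$-injective, i.e. each $f_\alpha$ with $\alpha\in\Gamma$ acts injectively on $M$. Any $s\in F_\Gamma$ is a composition of such operators, hence acts injectively on $M$ as well; therefore the $F_\Gamma$-torsion submodule of $M$ is zero, so $\Ker\phi=0$ and $\phi$ identifies $M$ with the $\fra$-submodule $1\otimes M$ of $M_\Gamma$. This identification is $\frt$-equivariant, so $M\subseteq M_\Gamma$ as $(\fra,\frt)$-modules.

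For the second statement, $\Gamma$-bijectivity implies $\Gamma$-injectivity, so $\phi$ is injective by the previous paragraph and it remains to prove surjectivity. An arbitrary element of $M_\Gamma$ is a finite sum $\sum_i s_i^{-1}\otimes m_i$ with $s_i\in F_\Gamma$ and $m_i\in M$; using that the finitely many $s_i$ commute, bring them to the common denominator $s=\prod_i s_i\in F_\Gamma$, so that the element equals $s^{-1}\otimes m$ for a single $m\in M$. As each $f_\alpha$, and hence $s$, acts bijectively on $M$, there is $m'\in M$ with $sm'=m$, whence $s^{-1}\otimes m=s^{-1}\otimes sm'=1\otimes m'=\phi(m')$. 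Thus $\phi$ is onto, hence an isomorphism of $\fra$-modules, and it follows that $M\simeq M_\Gamma$ as $(\fra,\frt)$-modules. There is no real obstacle here: everything is the standard bookkeeping of Ore localization, and the only points needing care --- that $F_\Gamma$ is genuinely an Ore set (which uses the commutativity of the $f_\alpha$) and that $M_\Gamma$ stays inside the category of $(\fra,\frt)$-modules --- are part of Mathieu's framework and may simply be cited.
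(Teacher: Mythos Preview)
Your proposal is correct and matches the paper's approach: the paper declares the lemma ``straightforward'' and gives no proof, and what you have written is exactly the standard Ore-localization bookkeeping that makes the statement straightforward. The only minor remark is that the paper's convention has $\Gamma\subseteq -\Delta(\fra)$, so the commuting of the $f_\alpha$'s follows because $-\alpha-\beta\notin\Delta$ (equivalently $\alpha+\beta\notin\Delta$), which is what you use.
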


Recall that an injective homomorphism $\vf: A\rightarrowtail B$ of $(\fra, \frt)$-modules is called \emph{essential} if $H\cap\vf(A)\neq0$ for any nonzero submodule $H$ of $B$. Moreover, we say that $B$ is an \emph{essential extension} of $\vf(A)$. Note that if $M$ is an essential extension of $N$ and $N$ is an essential extension of $L$, then $M$ is also an essential extension of $L$.

\begin{prop}\label{iee1}
Let $M$ be an $(\fra, \frt)$-modules. There exists a unique injective $(\fra, \frt)$-module $I$ containing $M$ such that $I$ is an essential extension of $M$.
\end{prop}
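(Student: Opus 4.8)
The plan is to carry out the classical construction of the injective hull inside the category $\caC$ of all $(\fra,\frt)$-modules, after first checking that $\caC$ is a Grothendieck category. Since $\frt\subseteq\frh$ is abelian, a finite-dimensional $\frt$-module is semisimple precisely when $\frt$ acts diagonalizably, so a locally $\frt$-finite semisimple $\frt$-module is just a $\frt$-weight module; this condition on the underlying $\frt$-module is inherited by $\fra$-submodules, quotients and arbitrary direct sums. Hence $\caC$ is an abelian subcategory of the category of $\fra$-modules closed under these operations, and it has a generator (e.g.\ the direct sum of a set of representatives of the isomorphism classes of cyclic objects of $\caC$). By a theorem of Grothendieck, $\caC$ then has enough injectives; fix an embedding $M\hookrightarrow E$ with $E\in\caC$ injective.

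Next I would produce $I$ by Zorn's lemma. Let $\caS$ be the set of submodules $N$ with $M\subseteq N\subseteq E$ such that $N$ is an essential extension of $M$, ordered by inclusion. The union of a chain in $\caS$ again lies in $\caS$: any nonzero submodule $H$ of the union contains $U(\fra)h$ for some $0\neq h\in H$, which lies in one member $N$ of the chain, and $U(\fra)h\cap M\neq0$ there. So $\caS$ has a maximal element $I$, and the crucial claim is that $I$ is injective, for which it suffices to show $I$ has no proper essential extension. If $I\subsetneq I'$ in $\caC$ with $I'$ essential over $I$, then injectivity of $E$ extends $I\hookrightarrow E$ to a map $\psi\colon I'\to E$ fixing $I$; its kernel meets $I$ trivially, hence is zero since $I$ is essential in $I'$, so $\psi$ is injective. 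Then $\psi(I')$ is a submodule of $E$ strictly containing $I\supseteq M$, and it is an essential extension of $M$ because essential extensions compose; this contradicts the maximality of $I$. By the standard characterization of injective objects in a Grothendieck category (injectivity is equivalent to having no proper essential extension), $I$ is therefore injective, and $I\in\caS$ is the desired module.

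For uniqueness, let $I_1,I_2\in\caC$ be injective and each an essential extension of $M$. By injectivity of $I_2$, the inclusion $M\hookrightarrow I_2$ extends to a homomorphism $\vf\colon I_1\to I_2$ fixing $M$; since $\Ker\vf\cap M=0$ and $M$ is essential in $I_1$, the map $\vf$ is injective. Then $\vf(I_1)\cong I_1$ is injective, hence a direct summand of $I_2$; as it contains $M$ and $M$ is essential in $I_2$, the complementary summand vanishes and $\vf(I_1)=I_2$. Thus $\vf$ is an isomorphism restricting to the identity on $M$, which gives the uniqueness.

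The genuinely new input is the first step --- recognizing $\caC$ as a Grothendieck category, so that enough injectives and the characterization of injectivity by the absence of proper essential extensions are available; everything afterward is the textbook injective-hull argument, and I expect no difficulty there. In the write-up it may be cleanest to isolate the Grothendieck-category point as a short remark, or to cite it from the literature on (generalized) Harish-Chandra modules.
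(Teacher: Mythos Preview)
Your proposal is correct and follows essentially the same strategy as the paper: both invoke enough injectives in the category of $(\fra,\frt)$-modules (the paper cites \cite{BW} for this, while you verify the Grothendieck-category axioms), then apply Zorn's lemma inside an ambient injective to find a maximal essential extension $I$ of $M$ and argue that $I$ is itself injective, followed by the same uniqueness argument. The only cosmetic difference is that you invoke the characterization ``injective $\Leftrightarrow$ no proper essential extension,'' whereas the paper proves directly that $I$ is a direct summand of the ambient injective---but unwinding that characterization reproduces exactly the paper's complement argument.
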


The injective module $I$ is called the \emph{injective envelop} of $M$. The proof of the above proposition uses standard methods of homological algebra, see for example Maclane \cite{M}, Chapter \Rmnum{3}, \S 11.

\begin{proof}
It was shown in \cite{BW}, Chapter \Rmnum{1}, \S2.6 that the category of $(\fra, \frt)$-modules has enough injective modules. Let $J$ be an injective $(\fra, \frt)$-module containing $M$. Consider the set $\Psi$ of essential extensions of $M$ contained in $J$. If $\{E_i\}$ is a subset of $\Psi$ ordered by inclusion, then the union $E=\cup E_i$ is still an $(\fra, \frt)$-module and an essential extension of $M$ contained in $J$. By Zorn's Lemma, there exists a maximal essential extension $I$ of $M$ which is contained in $J$. By a similar argument, we can find a maximal submodule $K$ of $J$ such that $K\cap I=0$. Therefore we have an essential injective homomorphism $\sigma:I\ra J/K$. Denote by $i$ the embedding $I\rightarrowtail J$. Since $J$ is an injective module, there is a homomorphism $\pi:J/K\ra J$ such that $i=\pi\sigma$. Keeping in mind that $i$ is injective, we have $\Ker\pi\cap\sigma(I)=0$. Hence $\Ker\pi=0$, for $\sigma$ is essential. It follows that $\pi(J/E)$ is an essential extension of $M$ contained in $J$. The maximality of $I$ implies $I\simeq J/K$. So we have a split exact sequence $0\ra K\ra J\ra I\ra0$. Therefore $I$ is a direct summand of $J$ and is injective.

Now the inclusion $e:M\ra I$ is essential with $I$ injective. If $e':M\ra I'$ is another such inclusion with $I'$ injective, then there exists a homomorphism $\pi: I\ra I'$ such that $\pi e=e'$. Since $e$ is essential and $e'$ is injective, a similar reasoning shows that $\pi$ is an injective map. Therefore $\pi(I)$ is an injective module and is a direct summand of $I'$. Because $e'=\pi e:M\ra I\ra I'$ is also essential, we must have $I\simeq I'$.
\end{proof}

\begin{lemma}\label{iee2}
Suppose that the injective homomorphism $\vf:M\ra N$ of $(\fra, \frt)$-modules is essential. If $M$ is $\Gamma$-injective, so is $N$.
\end{lemma}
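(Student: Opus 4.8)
The plan is to fix $\alpha\in\Gamma$ and show that $f_\alpha$ acts injectively on $N$; since nothing in the argument singles out a particular element of $\Gamma$, this gives at once that $N$ is $\Gamma$-injective. The device I would use is the $f_\alpha$-torsion subspace
\[
N':=\{v\in N\mid f_\alpha^{\,k}v=0\ \text{for some}\ k\geq 1\}.
\]
Granting for the moment that $N'$ is an $(\fra,\frt)$-submodule of $N$, the conclusion is immediate: $f_\alpha$ acts injectively on $M$, hence on $\vf(M)$, so $\vf(M)\cap N'=0$; but $\vf$ is essential, so an $(\fra,\frt)$-submodule meeting $\vf(M)$ only in $0$ must itself be $0$, forcing $N'=0$, and in particular $\Ker\big(f_\alpha\colon N\ra N\big)\subseteq N'=0$.

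The substance of the proof is therefore the claim that $N'$ is a submodule, and this is where I would spend the effort. The point is that $f_\alpha$ is a nilpotent element of the reductive Lie algebra $\frg$: it lies in an $\frsl_2$-triple together with $e_\alpha$ and $h_\alpha$, so $\ad f_\alpha$ acts nilpotently on $\frg$, hence on the subalgebra $\fra$ (which contains $f_\alpha$ and is $\ad f_\alpha$-stable). Choose $m$ with $(\ad f_\alpha)^m=0$ on $\fra$. For $x\in\fra$ the standard identity in $U(\fra)$,
\[
f_\alpha^{\,n}\,x=\sum_{j=0}^{n}\binom{n}{j}\,(\ad f_\alpha)^j(x)\,f_\alpha^{\,n-j},
\]
then reduces to the terms $0\leq j\leq m-1$. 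Hence if $f_\alpha^{\,k}v=0$ and $x\in\fra$, taking $n=k+m-1$ gives $f_\alpha^{\,n}(xv)=(f_\alpha^{\,n}x)v=0$, so $xv\in N'$; inducting on the length of monomials in $U(\fra)$ shows $N'$ is $U(\fra)$-stable. Since $\frt\subseteq\fra$ it is in particular $\frt$-stable, and local $\frt$-finiteness together with $\frt$-semisimplicity pass to submodules, so $N'$ is genuinely an $(\fra,\frt)$-submodule of $N$.

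I expect the only real obstacle to be this verification that $N'$ is a submodule; everything else is formal, and the one spot to handle with a little care is the commutator bookkeeping showing $f_\alpha^{\,n}(xv)=0$ for suitable $n$, which rests squarely on the ad-nilpotency of $f_\alpha$. One could equivalently phrase the whole argument through the localization map $N\ra N_\Gamma$, whose kernel is precisely the $F_\Gamma$-torsion, but the torsion-submodule formulation keeps the use of essentiality most transparent.
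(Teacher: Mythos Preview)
Your proof is correct and rests on the same mechanism as the paper's: the ad-nilpotency of $f_\alpha$ on $\fra$ lets one push powers of $f_\alpha$ past arbitrary elements of $U(\fra)$, and essentiality then forces the $f_\alpha$-torsion in $N$ to vanish.

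The packaging is slightly different. The paper argues by contradiction from a single vector: it picks $v\neq 0$ with $f_\alpha v=0$, uses essentiality to find $u\in U(\fra)$ with $0\neq uv\in\vf(M)$, and then invokes the Ore-type identity $f_\alpha^{k}u=u'f_\alpha$ (for $k$ large) to conclude $f_\alpha^{k}(uv)=0$, contradicting injectivity of $f_\alpha$ on $M$. You instead package the same computation globally, showing the full $f_\alpha$-torsion subspace $N'$ is an $(\fra,\frt)$-submodule via the explicit commutator expansion $f_\alpha^{n}x=\sum_j\binom{n}{j}(\ad f_\alpha)^j(x)\,f_\alpha^{\,n-j}$, and then apply essentiality once to $N'$. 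Your version is perhaps tidier conceptually (and makes the link to the kernel of the localization map $N\to N_\Gamma$ transparent), while the paper's avoids introducing $N'$ at all; but the underlying idea is identical.
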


\begin{proof}
If $M$ is $\Gamma$-injective, then for any $\alpha\in\Gamma$, $f_\alpha$ acts injectively on $M$. If $N$ is not $\Gamma$-injective, assume that $f_\alpha v=0$ for some nonzero vector $v\in N$. Since $N$ is an essential extension of $M$, the submodule $U(\fra)v$ generated by $v$ has a nontrivial intersection with $\vf(M)$, that is, there exists $u\in U(\fra)$ such that $u v\in \vf(M)$ and $uv\neq0$. On the other hand, we can find $u'\in U(\fra)$ such that $f_\alpha^ku=u'f_\alpha$ for positive integer $k$ large enough. Therefore, one has
\[
f_\alpha^k(uv)=(f_\alpha^ku)v=u'(f_\alpha v)=0.
\]
Since $uv$ is contained in $\vf(M)\simeq M$ on which the action of $f_\alpha$ is injective, one has $uv=0$, a contradiction. Thus $N$ is also $\Gamma$-injective.
\end{proof}

\begin{lemma}\label{ienv}
If an $(\fra, \frt)$-module $M$ is $\Gamma$-injective, then its injective envelop $I$ is $\Gamma$-bijective. If $M$ is $\Gamma$-bijective, so is $I/M$.
\end{lemma}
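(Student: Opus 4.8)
The plan is to deduce the first statement directly from Lemma \ref{iee2} together with Lemma \ref{bij}, and then to obtain the second statement by a short localization argument. For the first claim, suppose $M$ is $\Gamma$-injective. By Proposition \ref{iee1} the inclusion $M\hookrightarrow I$ is essential, so Lemma \ref{iee2} gives that $I$ is $\Gamma$-injective. It remains to upgrade ``injective'' to ``bijective'', i.e.\ to show that each $f_\alpha$ with $\alpha\in\Gamma$ acts \emph{surjectively} on $I$. Here I would use the localization: since $f_\alpha$ acts injectively on $I$ for every $\alpha\in\Gamma$, Lemma \ref{bij} gives an inclusion $I\subseteq I_\Gamma$, and $I_\Gamma$ is a $\Gamma$-bijective $(\fra,\frt)$-module. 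The point is that this inclusion is itself essential: any nonzero $x\in I_\Gamma$ can be written as $F^{-1} y$ for some $y\in I$ and some product $F$ of elements $f_\alpha$, so that the submodule $U(\fra)x$ meets $I$ nontrivially (it contains $Fx=y\neq0$, using that $F$ normalizes $U(\fra)$ up to the relations $f_\alpha^k u = u' f_\alpha$ exploited in the proof of Lemma \ref{iee2}). Thus $I_\Gamma$ is an essential extension of $I$, hence of $M$; but $I$ is a \emph{maximal} essential extension of $M$ (it is the injective envelope), which forces $I=I_\Gamma$. Therefore $f_\alpha$ acts bijectively on $I$, proving $I$ is $\Gamma$-bijective.

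For the second claim, assume $M$ is $\Gamma$-bijective. By Lemma \ref{bij} we may identify $M\simeq M_\Gamma$, so each $f_\alpha$ ($\alpha\in\Gamma$) acts bijectively on $M$; by the first part, each $f_\alpha$ also acts bijectively on $I$. I would then show $f_\alpha$ acts bijectively on the quotient $I/M$. Injectivity: if $f_\alpha(v+M)=0$ in $I/M$, then $f_\alpha v\in M$, and since $f_\alpha$ is surjective on $M$ there is $w\in M$ with $f_\alpha w=f_\alpha v$; then $f_\alpha(v-w)=0$ in $I$, and injectivity of $f_\alpha$ on $I$ gives $v=w\in M$, so $v+M=0$. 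Surjectivity is immediate: given $v+M\in I/M$, surjectivity of $f_\alpha$ on $I$ yields $w\in I$ with $f_\alpha w=v$, whence $f_\alpha(w+M)=v+M$. Hence $I/M$ is $\Gamma$-bijective.

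The main obstacle is the first claim, specifically verifying that the inclusion $I\subseteq I_\Gamma$ is essential and hence that maximality of the injective envelope forces equality; the normalization identity $f_\alpha^k u = u' f_\alpha$ in $U(\fra)$ (valid because $\fra$ is $\frh$-invariant, so $\ad(f_\alpha)$ acts locally nilpotently on the relevant root spaces, exactly as used in Lemma \ref{iee2}) is what makes this work, and one should check it applies uniformly for all $\alpha$ in the commuting set $\Gamma$. The rest is routine diagram-chasing with the injectivity and surjectivity of the $f_\alpha$.
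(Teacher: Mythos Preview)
Your proof is correct. For the first claim the paper takes a slightly different route: after observing $I\subset I_\Gamma$, it uses injectivity of $I$ to split this inclusion, writing $I_\Gamma=I\oplus K$, and then (tersely) concludes $K=0$. You instead show directly that $I\subset I_\Gamma$ is essential and invoke the fact that an injective module admits no proper essential extension. Both arguments are valid and closely related (unpacking ``no proper essential extension'' is exactly the splitting argument), but your version is arguably more transparent. One minor point: your appeal to the Ore-type identity $f_\alpha^k u=u'f_\alpha$ is unnecessary for the essentialness step. Since each $f_\alpha\in\fra$, the product $F$ already lies in $U(\fra)$, so $Fx\in U(\fra)x$ is immediate; and $Fx\neq0$ follows because $I_\Gamma$ is $\Gamma$-bijective. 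For the second claim, the paper simply says it is ``an easy consequence of the first one''; your explicit verification that each $f_\alpha$ acts bijectively on $I/M$ is correct and fills in that detail.
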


\begin{proof}
It suffices to prove the first statement since the second one is an easy consequence of the first one. In view of Proposition \ref{iee1} and Lemma \ref{iee2}, $I$ is $\Gamma$-injective. Then $I\subset I_\Gamma$ by Lemma \ref{bij}. Denote by $e$ the embedding $I\ra I_\Gamma$ and by $i$ the identity map $I\ra I$. Since $I$ is injective, there exists a map $\pi: I_\Gamma\ra I$ such that $\pi e=i$.

\[
\xymatrix{
  I \ar[d]_{i} \ar[r]^{e} &  I_\Gamma \ar@{.>}[ld]_{\pi}     \\
  I }
\]
Hence the exact sequence
\[
0\ra I\stackrel{e}{\longrightarrow} I_\Gamma\ra I_\Gamma/I\ra 0
\]
is split. This can happen only when $I\simeq I_\Gamma$.

\end{proof}

\begin{prop}\label{van}
Given an $\fra$-module $M$, if there exists $\alpha\in\Delta(\fra)$ such that $f_{-\alpha}$ acts bijectively on $M$, then $H^i(\fra, M)=0$ for any $i\in\bbN$.
\end{prop}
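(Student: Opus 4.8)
The plan is to exhibit a contracting homotopy for the Chevalley–Eilenberg cochain complex computing $H^\bullet(\fra, M)$, built out of the inverse of the (bijective) action of $f_{-\alpha}$. Recall that the cochains are $C^i(\fra, M) = \Hom(\textstyle\bigwedge^i \fra, M)$ with the usual differential, and that $f_{-\alpha} \in \fra$ since $\alpha \in \Delta(\fra)$ forces $-\alpha \in \Delta(\fra)$ and hence $f_{-\alpha} \in \frg_{-\alpha} \cap \fra$. The key classical fact I would invoke is the Cartan-type formula: for any $x \in \fra$, the Lie derivative $L_x$ on the complex $C^\bullet(\fra, M)$ is chain-homotopic to zero via the contraction operator $\iota_x$, namely $L_x = d\,\iota_x + \iota_x\, d$. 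Here $\iota_x \colon C^i \to C^{i-1}$ is the usual interior multiplication and $L_x$ acts on $\Hom(\bigwedge^i \fra, M)$ through the adjoint action on $\bigwedge^i\fra$ and the module action on $M$.

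The main point is then that $L_{f_{-\alpha}}$ is \emph{invertible} on the whole complex $C^\bullet(\fra, M)$. On a pure tensor, $L_{f_{-\alpha}}$ acts as a sum of two commuting pieces: the module action of $f_{-\alpha}$ on the $M$-factor (which is bijective by hypothesis) and the adjoint action on $\bigwedge^i\fra$ (which is \emph{nilpotent}, since $\ad f_{-\alpha}$ is a nilpotent endomorphism of the finite-dimensional space $\fra$, hence so is its action on $\bigwedge^i\fra$ and on $\Hom$). A commuting sum of a bijective operator and a nilpotent one is bijective: writing $L_{f_{-\alpha}} = B + N$ with $B$ bijective, $N$ nilpotent, $BN = NB$, one has $L_{f_{-\alpha}} = B(1 + B^{-1}N)$ and $1 + B^{-1}N$ is invertible because $B^{-1}N$ is nilpotent. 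Let $T$ denote the inverse of $L_{f_{-\alpha}}$ on $C^\bullet(\fra, M)$; since $L_{f_{-\alpha}}$ is a chain map, so is $T$, and $T$ commutes with $d$ and with $\iota_{f_{-\alpha}}$.

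Now set $s := T \circ \iota_{f_{-\alpha}} = \iota_{f_{-\alpha}} \circ T \colon C^i \to C^{i-1}$. Composing the Cartan formula $d\,\iota_{f_{-\alpha}} + \iota_{f_{-\alpha}}\, d = L_{f_{-\alpha}}$ with $T$ on the left gives $d\, s + s\, d = T L_{f_{-\alpha}} = \id$ on every $C^i$. Thus $s$ is a contracting homotopy, so the complex $C^\bullet(\fra, M)$ is acyclic in all degrees, which is precisely the assertion $H^i(\fra, M) = 0$ for all $i \in \bbN$. I do not expect any serious obstacle here; the only thing to be careful about is the bookkeeping that $\ad f_{-\alpha}$ really is nilpotent on $\fra$ (it is, because $f_{-\alpha}$ is a root vector in the reductive algebra $\frg$ and $\fra$ is $\ad f_{-\alpha}$-stable, being $\frh$-invariant and containing $\frg_{-\alpha}$), and that all the operators in sight genuinely commute so that the "bijective plus nilpotent" argument applies verbatim on each graded piece.
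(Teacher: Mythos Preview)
Your argument is correct and gives a clean, self-contained proof via a contracting homotopy on the Chevalley--Eilenberg complex. (One small bookkeeping slip: with the paper's convention $f_\beta \in \frg_{-\beta}$, one has $f_{-\alpha} \in \frg_\alpha$, not $\frg_{-\alpha}$; the membership $f_{-\alpha} \in \fra$ then follows directly from $\alpha \in \Delta(\fra)$, with no need to invoke $-\alpha \in \Delta(\fra)$, which may in fact fail. This does not affect your argument.)

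The paper takes a genuinely different route. Instead of working on the standard complex, it produces an injective resolution of $M$ in the category of $\fra$-modules \emph{consisting entirely of $\Gamma$-bijective modules}, where $\Gamma = \{-\alpha\}$; this relies on the preceding Lemma~\ref{ienv} (the injective envelope of a $\Gamma$-bijective module is again $\Gamma$-bijective). On each such injective $I_j$ the invariants $I_j^\fra$ vanish, since any $\fra$-invariant vector would be annihilated by $f_{-\alpha}$, contradicting bijectivity; hence the complex computing $\Ext^\ast_\fra(\bbC, M) = H^\ast(\fra, M)$ is identically zero. Your approach is more elementary and sidesteps the injective-envelope machinery altogether. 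The paper's approach, by contrast, is part of a broader development of $\Gamma$-bijective resolutions that is reused later (notably in Lemma~\ref{lemtst2}, comparing $\Ext$ groups in the localized category $\caC_\Gamma$), so the apparent detour pays dividends downstream.
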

\begin{proof}
Let $\frt=0$ and $\Gamma=\{-\alpha\}$. Then $M$ is a $\Gamma$-bijective $(\fra, \frt)$-module. It follows from  Lemma \ref{ienv} that there exists an injective resolution of $M$ of $\Gamma$-bijective $\fra$-modules:
\[
0\ra M\ra I_0\ra I_1\ra\ldots.
\]
By definition $H^i(\fra, M)=\Ext_{\fra}^i(\bbC, M)$ is the cohomology group of the complex
\[
0\ra I_0^\fra\ra I_1^\fra\ra\ldots.
\]
But $I_i^\fra=0$ since $f_{-\alpha}\in\fra$ acts bijectively on $I_i$. Therefore $H^i(\fra, M)\equiv0$.
\end{proof}

\subsection{Dirac cohomology of weight modules} An admissible $(\frg, \frh)$-module $M$ is called \emph{cuspidal} if $e_\alpha$ acts injectively on $M$ for all $\alpha\in\Delta$. The following lemma is straightforward.

\begin{lemma}\label{cusp1}
Let $M$ be a simple admissible cuspidal $(\frg, \frh)$-module. There exists integer $d$ such that $\dim M_\lambda=d$ for any $\lambda\in\mathrm{supp}M$. Moreover, $\mathrm{supp}M=\lambda+Q$.
\end{lemma}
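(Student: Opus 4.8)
The plan is to exploit the symmetry $-\alpha\in\Delta$ together with admissibility, so that cuspidality forces \emph{every} root vector to act injectively, and then to run a short ``both directions'' dimension comparison. Since $M$ is simple it is nonzero, and being an $(\frg,\frh)$-module it is semisimple over $\frh$, so $M=\bigoplus_\mu M_\mu$ and $\mathrm{supp}M\neq\emptyset$; fix $\lambda\in\mathrm{supp}M$.

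First I would pin down the support. Because $\frg M_\mu\subseteq\sum_{\alpha\in\Delta\cup\{0\}}M_{\mu+\alpha}$, any weight of $M$ lies in $\lambda+Q$, so $\mathrm{supp}M\subseteq\lambda+Q$. For the reverse inclusion, note that for each $\alpha\in\Delta$ the operator $e_\alpha$ carries $M_\mu$ into $M_{\mu+\alpha}$ and, by cuspidality, does so injectively; hence $M_\mu\neq0$ implies $M_{\mu+\alpha}\neq0$. Applying this to $\alpha$ and to $-\alpha$ (also a root), we see $\mathrm{supp}M$ is stable under translation by every root, hence under translation by every element of $Q$ since $Q$ is generated by $\Delta$. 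Therefore $\mathrm{supp}M=\lambda+Q$.

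Next I would establish the uniform dimension. Fix $\alpha\in\Delta$ and $\mu\in\frh^*$. The maps $e_\alpha\colon M_\mu\to M_{\mu+\alpha}$ and $f_\alpha=e_{-\alpha}\colon M_{\mu+\alpha}\to M_\mu$ are both injective by cuspidality, and all weight spaces are finite-dimensional by admissibility, so $\dim M_\mu=\dim M_{\mu+\alpha}$. Now given $\mu,\mu'\in\mathrm{supp}M$, write $\mu'-\mu=\beta_1+\dots+\beta_r$ with each $\beta_j\in\Delta$; chaining the equality $\dim M_\nu=\dim M_{\nu+\beta_j}$ along the partial sums $\nu=\mu,\ \mu+\beta_1,\ \dots$ gives $\dim M_\mu=\dim M_{\mu'}$. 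Setting $d:=\dim M_\lambda\in\bbN$ (and $d\geq1$ since $M_\lambda\neq0$) yields $\dim M_\mu=d$ for all $\mu\in\mathrm{supp}M$.

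There is no serious obstacle here: the entire content is the observation that $e_\alpha$ and $e_{-\alpha}=f_\alpha$ act injectively in opposite directions between $M_\mu$ and $M_{\mu+\alpha}$, which sandwiches the two dimensions. The only point deserving a word of care is that the dimension equality is actually valid for \emph{all} $\mu$ (even outside the support, where it reads $0=0$), so that the chaining argument needs no separate verification that the intermediate weights lie in $\mathrm{supp}M$; one may if preferred invoke the first part, where $\mathrm{supp}M=\lambda+Q$ is shown to be exactly the set of such intermediate weights.
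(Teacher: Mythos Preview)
Your proof is correct and is precisely the elementary argument the paper has in mind: the paper gives no proof of this lemma at all, only the remark ``The following lemma is straightforward.'' The one place your write-up is slightly telegraphic is the inclusion $\mathrm{supp}M\subseteq\lambda+Q$, where you should make explicit that simplicity is being used (the subspace $\bigoplus_{\mu\in\lambda+Q}M_\mu$ is a nonzero submodule by the weight-shift observation, hence all of $M$); you invoked simplicity at the outset, so this is only a matter of phrasing.
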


\begin{example}
Suppose that $e, f, h$ are the usual basis of $\frg=\frsl(2, \bbC)$. Then $\frs=\bbC e\oplus\bbC f$ and $\frh=\bbC h$. As in \cite{GS2}, denote by $t^\mu$ the monomial $t_0^{\mu_0}t_1^{\mu_1}$, where $\mu=(\mu_0, \mu_1)\in\bbC^2$. Set $\pt_0:=\pt/\pt t_0$ and $\pt_1:=\pt/\pt t_1$. Put
\[
\caF_\mu:=\left\{g\in t^\mu\bbC[t_0^{\pm1}, t_1^{\pm1}]\ \left|\ \left(t_0\pt_0+t_1\pt_1\right)g=(\mu_0+\mu_1)g\right.\right\}.
\]
The action of $\frg$ on $\caF_\mu$ is given by
\[
e\mapsto t_0\pt_1,\quad f\mapsto t_1\pt_0\quad\mbox{and}\quad h\mapsto t_0\pt_0-t_1\pt_1.
\]
Then $\caF_\mu$ is a cuspidal $\frg$-module if and only if $\mu_0, \mu_1\not\in\bbZ$.

Note that the spin module $S_{\frg, \frh}\simeq\bbC1\oplus\bbC e$ as vector spaces \cite{Ko2}. Consider the action of the Clifford algebra $C(\frs)$ on $\bbC1\oplus\bbC e$ given by
\[
e\cdot(1)=e,\quad e\cdot(e)=0, \quad f\cdot(1)=0, \quad\mbox{and}\quad f\cdot(e)=2.
\]
It is easy to verify that the above action is compatible with the relation (\ref{Crelation}). Since $D(\frg, \frh)=e\otimes f+f\otimes e$ in this case, one has
\[
D(\frg, \frh)(g_1\otimes 1+g_2\otimes e)=t_0\pt_1 g_2\otimes 2+t_1\pt_0 g_1\otimes e,
\]
where $g_1, g_2\in\caF_\mu$. Therefore
\[
\Imm D(\frg, \frh)=\left\{t_0\pt_1 g_2\otimes 2+t_1\pt_0 g_1\otimes e\ \left|\ g_1, g_2\in\caF_\mu  \right.\right\}
\]
and
\[
\Ker D(\frg, \frh)=\{g_1\otimes 1+g_2\otimes e\ |\ \pt_0g_1=0\ \mbox{and}\ \pt_1 g_2=0\ \mbox{for}\ g_1, g_2\in\caF_\mu \}.
\]
It is evident that $\Ker D(\frg, \frh)=0$ if $\mu_0, \mu_1\not\in\bbZ$. Hence
\[
H_{D(\frg,\frh)}(\caF_\mu)=0
\]
if $\caF_\mu$ is cuspidal.
\end{example}

Notice that any $(\frl, \frh)$-module $V$ can be made into a $(\frp, \frh)$-module by letting $\fru$ act trivially on $V$. Set
\[
M_\frp(V)=U(\frg)\otimes_{U(\frp)}V.
\]
If $V\in\caM(\frl, \frh)$ is simple, then $M_\frp(V)\in\caM(\frg, \frh)$ and admits a unique simple quotient which we denote by $L_\frp(V)$.

\begin{theorem}[Fernando]\label{fernando}
Every module in $\mathcal{M}(\frg, \frh)$ is of finite length. Every simple module $M\in\mathcal{M}(\frg, \frh)$ is isomorphic to $L_\frp(V)$ for some parabolic subalgebra $\frp$ containing $\frh$ and some simple cuspidal $(\frl, \frh)$-module $V$, where $\frl$ is the Levi subalgebra of $\frp$. If $\frg$ is simple and $M$ is cuspidal, then $\frg$ is either of type $A$ or of type $C$.
\end{theorem}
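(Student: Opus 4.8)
The plan is to follow Fernando's argument \cite{F}, organized in three parts.

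\medskip
\emph{Step 1: the dichotomy and the attached parabolic.} For a simple module $M\in\mathcal{M}(\frg,\frh)$ and a root $\alpha$, I would first show that $e_\alpha$ acts on $M$ either injectively or locally nilpotently. This follows because the subspace $M[\alpha]:=\{v\in M : e_\alpha^Nv=0\text{ for some }N\}$ is a $\frg$-submodule: for $x\in\frg_\beta$ one has $e_\alpha^{N+K}(xv)=\sum_{j}\binom{N+K}{j}\bigl((\ad e_\alpha)^jx\bigr)\bigl(e_\alpha^{N+K-j}v\bigr)$, and for $K\ge\dim\frg$ every summand either carries a power of $\ad e_\alpha$ annihilating $x$ (since $\ad e_\alpha$ is nilpotent on the finite-dimensional $\frg$) or a power of $e_\alpha$ annihilating $v$; hence $M[\alpha]$ is $0$ or $M$, i.e.\ $e_\alpha$ is injective or locally nilpotent, respectively. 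Write $\Delta_n$ for the set of roots along which $M$ is locally nilpotent. The combinatorial core of the proof --- the step I expect to be the main obstacle --- is then to show that $\Delta(\frl):=\{\alpha : e_\alpha\text{ and }e_{-\alpha}\text{ both act injectively on }M\}$ is the root system of a Levi subalgebra $\frl\supseteq\frh$ and that one can choose a parabolic $\frp=\frl\oplus\fru$ with $\Delta(\fru)\subseteq\Delta_n$; in particular $\fru$ then acts locally nilpotently on $M$.

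\medskip
\emph{Step 2: identification with $L_\frp(V)$ and finite length.} Given such a $\frp$, any nonzero weight vector of $M$ generates, since $\Delta(\fru)$ lies in an open half-space, a finite-dimensional $\fru$-submodule whose top weight space is killed by $\fru$; hence $M^\fru:=\{v\in M : \fru v=0\}\neq0$. Since $[\frl,\fru]\subseteq\fru$, $M^\fru$ is an $(\frl,\frh)$-submodule lying in $\mathcal{M}(\frl,\frh)$ on which every $e_\beta$ with $\beta\in\Delta(\frl)$ acts injectively, so it is cuspidal over $\frl$. For finiteness of length of an arbitrary module in $\mathcal{M}(\frg,\frh)$ I would use that such a module is Noetherian (being finitely generated over the Noetherian ring $U(\frg)$) and Artinian, the latter by induction on Gelfand--Kirillov dimension after splitting off the finitely many generalized infinitesimal characters (a finite-codimensional ideal of $Z(\frg)$ annihilates the finite-dimensional span of the weights of a generating set, hence annihilates the module up to those characters). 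In particular any cyclic $\frl$-submodule $U(\frl)v\subseteq M^\fru$ has finite length, so contains a simple submodule $V$; this $V$ is a simple cuspidal $(\frl,\frh)$-module and a $\frp$-submodule of $M$ with $\fru V=0$, so Frobenius reciprocity gives a nonzero homomorphism $M_\frp(V)\to M$ whose image, a nonzero submodule of the simple $M$, is all of $M$. Therefore $M\cong L_\frp(V)$.

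\medskip
\emph{Step 3: the type restriction.} Suppose $\frg$ is simple and $M$ is cuspidal. By Lemma \ref{cusp1}, $\dim M_\nu$ equals a positive constant $d$ on $\mathrm{supp}M=\lambda+Q$; consequently, for $0\neq v\in M$, $\dim\bigl(U_n(\frg)v\bigr)$ is bounded above by $d$ times the number of lattice points of $Q$ in a ball of radius $O(n)$, and bounded below by $\binom{n+\operatorname{rank}\frg}{\operatorname{rank}\frg}$ (the images of the degree-$\le n$ monomials in the negative simple root vectors, which act injectively on $M$, lie in distinct weight spaces), so $M$ has Gelfand--Kirillov dimension exactly $\operatorname{rank}\frg$. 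On the other hand, an infinite-dimensional simple $\frg$-module has Gelfand--Kirillov dimension at least $\tfrac12\dim\mathcal{O}_{\min}$: its annihilator is a primitive ideal whose associated variety is the closure of a nonzero nilpotent orbit, hence contains $\overline{\mathcal{O}_{\min}}$, and one applies the standard lower bound (Gabber) relating the two dimensions. Equating the bounds gives $\operatorname{rank}\frg=\tfrac12\dim\mathcal{O}_{\min}$, which --- as one checks from the dual Coxeter numbers, absorbing the coincidences $B_2=C_2$ and $D_3=A_3$ --- holds precisely in types $A$ and $C$, forcing the stated conclusion.
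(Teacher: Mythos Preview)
The paper does not give its own proof of this theorem: it is quoted from Fernando \cite{F} and used as input for the rest of the paper, so there is no in-paper argument to compare against. Your outline is a faithful sketch of Fernando's strategy in Steps~1--2 (the injective/locally-nilpotent dichotomy, the construction of the parabolic from the partition of $\Delta$, and the extraction of a cuspidal $\frl$-submodule $V\subseteq M^{\fru}$ yielding $M\simeq L_\frp(V)$); you are right that the delicate point is the combinatorial claim that the ``injective'' roots form a Levi subsystem and that one can complete it to a parabolic whose nilradical lies in the locally nilpotent part --- this occupies the bulk of \cite{F}.

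Two small comments. In Step~3 you write ``equating the bounds gives $\operatorname{rank}\frg=\tfrac12\dim\mathcal{O}_{\min}$'', but what you actually obtain is the inequality $\operatorname{rank}\frg\ge\tfrac12\dim\mathcal{O}_{\min}=h^\vee-1$, and it is this inequality (not equality) whose failure outside types $A$ and $C$ rules out cuspidal modules; the conclusion is unchanged. This Gelfand--Kirillov/minimal-orbit argument is a legitimate and by now standard route to the type restriction, though it is not the one Fernando used in \cite{F}, where the nonexistence outside types $A$ and $C$ is obtained by a direct root-system argument. In Step~2 the finite-length argument is the sketchiest part: the passage ``Artinian by induction on Gelfand--Kirillov dimension after splitting off the finitely many generalized infinitesimal characters'' hides nontrivial work, and you should be careful that invoking finite length for the $\frl$-module $U(\frl)v\subseteq M^{\fru}$ does not become circular --- in practice one sets this up as an induction on $\operatorname{rank}[\frg,\frg]$, as you implicitly do.
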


As a consequence of Proposition \ref{van} and Theorem \ref{fernando}, we have the following result about Dirac cohomology of weight modules.

\begin{theorem}\label{main1}
If $M\in\caM(\frg, \frh)$ is simple, then $$H_{D(\frg,\frh)}(M)=0$$ unless $M$ is a highest weight module.
\end{theorem}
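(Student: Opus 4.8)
The plan is to combine Fernando's structure theorem (Theorem~\ref{fernando}) with the vanishing criterion (Proposition~\ref{van}) and the injection of Proposition~\ref{inj2}: for a simple weight module that is not a highest weight module I will exhibit a single root vector acting bijectively on it, which kills the relevant Lie algebra cohomology and hence the Dirac cohomology. First I would record that a simple $M\in\caM(\frg,\frh)$ has countable dimension (its support lies in one coset of $Q$ and all weight spaces are finite dimensional), so by Dixmier's form of Schur's lemma $Z(\frg)$ acts by scalars and $M$ has an infinitesimal character; thus Propositions~\ref{inj1} and \ref{inj2} apply to $M$ as an admissible $(\frg,\frh)$-module. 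By Theorem~\ref{fernando}, $M\simeq L_\frp(V)$ for a parabolic $\frp=\frl\oplus\fru\supseteq\frh$ and a simple cuspidal $(\frl,\frh)$-module $V$. If $\frl=\frh$ then $V$ is one dimensional and $M$ is a highest weight module with respect to the Borel subalgebra $\frp$, which is the excluded case; so I may assume $\Delta(\frl)\neq\emptyset$ and fix a root $\gamma\in\Delta(\frl)$.

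The heart of the argument is the claim that $e_\gamma$ acts bijectively on $M$. Since $V$ is a simple admissible cuspidal $(\frl,\frh)$-module, the $\frl$-analogue of Lemma~\ref{cusp1} (same proof) shows $\dim V_\mu$ is constant on $\mathrm{supp}V$, and as $e_\gamma\colon V_\mu\to V_{\mu+\gamma}$ is injective by cuspidality between weight spaces of equal finite dimension, $e_\gamma$ is bijective on $V$. On $M_\frp(V)\simeq U(\bar\fru)\otimes V$ (PBW) the element $e_\gamma\in\frl$ acts by $\bar u\otimes v\mapsto[e_\gamma,\bar u]\otimes v+\bar u\otimes e_\gamma v$, where $[e_\gamma,\bar u]\in U(\bar\fru)$ because $[\frl,\bar\fru]\subseteq\bar\fru$; filtering by PBW degree, on the associated graded $\bigoplus_n S^n(\bar\fru)\otimes V$ it acts as $\ad(e_\gamma)|_{S^n(\bar\fru)}\otimes 1+1\otimes e_\gamma$, the sum of a nilpotent operator (as $e_\gamma$ is $\ad$-nilpotent on the finite dimensional $\bar\fru$, hence on each $S^n(\bar\fru)$) and an invertible one. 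Writing $A\otimes 1+1\otimes B=(1\otimes B)(A\otimes B^{-1}+1)$ with $A$ nilpotent shows this is invertible, so $e_\gamma$ is bijective on each graded piece and therefore on $M_\frp(V)$. To descend to $L_\frp(V)=M_\frp(V)/N$, I would show that $N':=\{w\in M_\frp(V): e_\gamma^{n}w\in N\text{ for some }n\}$ is a $\frg$-submodule, using the finite binomial expansion $e_\gamma^{n}x=\sum_{k}\binom{n}{k}(\ad e_\gamma)^{k}(x)e_\gamma^{n-k}$ furnished by $\ad$-nilpotency of $e_\gamma$; since $e_\gamma$ is injective on $V$ and $V$ is simple, no generator of $M_\frp(V)$ lying in $1\otimes V$ belongs to $N'$, so $N\subseteq N'\subsetneq M_\frp(V)$ forces $N'=N$, i.e.\ $e_\gamma^{-1}(N)=N$. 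Hence $e_\gamma$ is surjective and injective on $L_\frp(V)\simeq M$.

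Finally, choose $\xi$ in the real span of the coroots with $\langle\xi,\gamma\rangle>0$ and $\langle\xi,\alpha\rangle\neq0$ for all $\alpha\in\Delta$, and set $\fru'=\bigoplus_{\langle\xi,\alpha\rangle>0}\frg_\alpha$, so that $\frh\oplus\fru'$ is a Borel subalgebra and $\gamma\in\Delta(\fru')$. Since $e_\gamma$, a nonzero multiple of $f_{-\gamma}$, acts bijectively on $M$, Proposition~\ref{van} applied to $\fra=\fru'$ with the root $\gamma$ gives $H^{i}(\fru',M)=0$ for all $i$; then Proposition~\ref{inj2} yields an injection $H_{D(\frg,\frh)}(M)\hookrightarrow H^{*}(\fru',M)\otimes\bbC_{\rho(\fru')}=0$, so $H_{D(\frg,\frh)}(M)=0$, as desired. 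I expect the genuine obstacle to be the bijectivity claim of the middle paragraph, and within it the descent from $M_\frp(V)$ to its simple quotient $L_\frp(V)$; the graded computation, the $\frl$-analogue of Lemma~\ref{cusp1}, and the assembly in the last paragraph are routine.
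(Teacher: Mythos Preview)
Your proof is correct and follows the same skeleton as the paper's: invoke Fernando's theorem, exhibit a root vector in $\frl$ acting bijectively on $L_\frp(V)$, apply Proposition~\ref{van} to kill $H^*(\fru',M)$ for a suitable Borel nilradical $\fru'$, and conclude via the injection of Proposition~\ref{inj2}. The one substantive difference is how the bijectivity is obtained. The paper simply cites Corollary~2.4 of \cite{CF}, which asserts that $L_\frp(V)$ is cuspidal as an $(\frl,\frh)$-module; bijectivity of each $e_\alpha$ ($\alpha\in\Delta(\frl)$) then follows immediately from admissibility, since both $e_\alpha$ and $e_{-\alpha}$ are injective between weight spaces of equal finite dimension. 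You instead supply a self-contained argument for a single $\gamma\in\Delta(\frl)$: first on $M_\frp(V)$ via the PBW filtration and the ``nilpotent plus invertible'' factorization on each $S^n(\bar\fru)\otimes V$, then descending to the simple quotient through the saturation $N'=\{w:e_\gamma^nw\in N\text{ for some }n\}$, which your ad-nilpotency identity shows is a submodule. This is longer but entirely internal to the paper and avoids the external reference; the paper's route is a one-line citation. Your choice of an arbitrary Borel containing $\frg_\gamma$, rather than one sitting inside $\frp$, is a harmless variant, since $H_{D(\frg,\frh)}$ is independent of the choice of nilradical. Your preliminary remark that a simple $M\in\caM(\frg,\frh)$ has an infinitesimal character (needed to invoke Proposition~\ref{inj2}) is a point the paper leaves implicit.
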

\begin{proof}
In view of Theorem \ref{fernando}, one has $M\simeq L_\frp(V)$ for some parabolic subalgebra $\frp$. It follows from Corollary 2.4 in \cite{CF} that $L_\frp(V)$ is cuspidal as an $(\frl,\frh)$-module. Let $\frb$ be a Borel subalgebra contained in $\frp$. Denote by $\frn$ the maximal nilpotent radical of $\frb$ ($\frb=\frh\oplus\frn$). If $M$ is not a highest weight module, then the Levi subalgebra $\frl\neq\frh$ and $\Delta(\frl\cap\frn)\neq\emptyset$. For any $\alpha\in\Delta(\frl\cap\frn)$, the vector $f_{-\alpha}$ acts bijectively on $L_\frp(V)$ since it is a cuspidal $(\frl, \frh)$-module. Then $H^*(\frn, L_\frp(V))=0$ by Proposition \ref{van}. Now the statement follows from Proposition \ref{inj2}.
\end{proof}

\begin{remark}
We can also show that $H_{D(\frg,\frh)}(M_\frp(V))=0$ by a similar argument. The Dirac cohomology of highest weight modules was discussed in \cite{HX}. So the above theorem completes the calculation of Dirac cohomology for simple weight modules.
\end{remark}

%
%%%%%%%%%%%%%%%%%%%%%%%%%%%%%%%%%%%%%%%%%%%%%%%%%%%%%%%%%%%%%%%%%%%%
%
\section{Mathieu's twisting functor and higher Ext groups}
%
%%%%%%%%%%%%%%%%%%%%%%%%%%%%%%%%%%%%%%%%%%%%%%%%%%%%%%%%%%%%%%%%%%%%
%

In this section, we give equations of higher Ext groups for $\Gamma$-bijective weight modules twisted by an automorphism.

\subsection{Category of $\Gamma$-bijective modules}
Here we follow the notational conventions in the previous section but focus on the case when $\fra=\frg$ and $\frt=\frh$. From now on $\Gamma$ is a commuting set in $\Delta$, while $U_\Gamma$ is the localization of $U(\frg)$ relative to $F_\Gamma$ and $M_\Gamma$ is the localization $U_\Gamma\otimes_{U(\frg)}M$. Some preliminary results in this subsection was also stated in \cite{CM}.

\begin{lemma}\label{lemtst}
Let $M$ and $N$ be two $(\frg, \frh)$-modules. Consider the linear map
\[
\Gamma_{M,N}^*:\Hom_{\frg, \frh}(M, N)\ra\Hom_{U_\Gamma, \frh}(M_\Gamma, N_\Gamma)
\]
defined by
\[
\Gamma_{M,N}^*(\vf)(u\otimes v)=u\otimes\vf(v)
\]
for any $u\in U_\Gamma$, $v\in M$ and $\vf\in\Hom_{\frg, \frh}(M, N)$. Then
\begin{itemize}
\item [(\rmnum{1})] The map $\Gamma_{M,N}^*$ is well defined.
\item [(\rmnum{2})] The map $\Gamma_{M,N}^*$ sent injective maps to injective maps.
\item [(\rmnum{3})] If $M$ and $N$ are $\Gamma$-bijective, then $\Gamma_{M,N}^*$ is bijective.
\end{itemize}
\end{lemma}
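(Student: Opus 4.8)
The plan is to verify the three assertions in order, treating each as a routine localization argument. First I would check well-definedness: given $\vf \in \Hom_{\frg,\frh}(M,N)$, the map $\id \otimes \vf : U_\Gamma \otimes_{U(\frg)} M \to U_\Gamma \otimes_{U(\frg)} N$ is induced by functoriality of the localization $U_\Gamma \otimes_{U(\frg)} (-)$, so it is automatically a well-defined $U_\Gamma$-module homomorphism; one only needs to confirm it respects the $\frh$-grading, which is clear since $\vf$ and the localization both preserve weights. I would also note $\Gamma_{M,N}^*$ is linear in $\vf$, so it is a homomorphism of Hom-spaces.

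For (ii), suppose $\vf$ is injective. The point is that localization at the multiplicative set $F_\Gamma$ is exact on the appropriate category: because $F_\Gamma$ satisfies the Ore condition in $U(\frg)$ (the $f_\alpha$ for $\alpha \in \Gamma$ generate an Ore set, as $\Gamma$ is a commuting set — this is the standard fact underlying the whole localization machinery used earlier in the paper), the functor $U_\Gamma \otimes_{U(\frg)} (-)$ is flat, hence exact, hence sends the injection $\vf$ to an injection $\id \otimes \vf = \Gamma^*_{M,N}(\vf)$. I would state this as the main input, citing the Ore property; alternatively one can argue directly that if $(\id\otimes\vf)(f_\Gamma^{-1}\otimes v)=0$ then $1\otimes \vf(v)$ is annihilated by some element of $F_\Gamma$, and since $N_\Gamma$ is $\Gamma$-bijective this forces $1\otimes\vf(v)=0$, then use injectivity of $\vf$ together with the kernel-description of $M \to M_\Gamma$.

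For (iii), assume $M$ and $N$ are $\Gamma$-bijective. By Lemma \ref{bij} we have canonical isomorphisms $M \simeq M_\Gamma$ and $N \simeq N_\Gamma$ as $(\frg,\frh)$-modules, and moreover every $U_\Gamma$-module structure on such a module is determined by its $U(\frg)$-module structure (the action of $f_\alpha^{-1}$ is forced to be the inverse of the action of $f_\alpha$). Hence restriction along $U(\frg) \hookrightarrow U_\Gamma$ gives a bijection $\Hom_{U_\Gamma,\frh}(M_\Gamma, N_\Gamma) \xrightarrow{\sim} \Hom_{\frg,\frh}(M_\Gamma, N_\Gamma) \simeq \Hom_{\frg,\frh}(M,N)$, and one checks this inverse map composes with $\Gamma^*_{M,N}$ to the identity on both sides. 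Concretely: given $\psi \in \Hom_{U_\Gamma,\frh}(M_\Gamma,N_\Gamma)$, precompose with $M \to M_\Gamma$ and postcompose with $N_\Gamma \to N$ to get $\vf \in \Hom_{\frg,\frh}(M,N)$, and verify $\Gamma^*_{M,N}(\vf) = \psi$ using that both are $U_\Gamma$-linear and agree on the image of $M$, which generates $M_\Gamma$ over $U_\Gamma$.

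I do not expect a genuine obstacle here — the statement is foundational bookkeeping. The one point requiring care is making the Ore/flatness claim precise in part (ii) rather than hand-waving "localization is exact"; if the paper has not already recorded that $F_\Gamma$ is an Ore set I would insert a sentence recalling it (it follows from $[h_\alpha, f_\alpha] = -\alpha(h_\alpha) f_\alpha$ type relations, which give $f_\alpha^k u \in U(\frg) f_\alpha$ for suitable $k$ — indeed this is exactly the computation already used in the proof of Lemma \ref{iee2}). With that in hand all three parts are short.
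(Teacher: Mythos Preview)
Your proposal is correct, and your route for (iii) is essentially the paper's (both invoke Lemma~\ref{bij}). The real difference is in (i) and (ii): the paper works by hand in the case $\Gamma=\{\alpha\}$, writing every element of $M_\Gamma$ as $f_\alpha^{-n}(1\otimes v)$, using that $1\otimes v=0$ iff $f_\alpha^m v=0$ for some $m$, and then checking directly that $\Gamma^*_{M,N}(\vf)$ is well defined and preserves injectivity via these explicit formulas. You instead appeal to functoriality of $U_\Gamma\otimes_{U(\frg)}(-)$ for (i) and to flatness of the Ore localization for (ii). Your approach is cleaner and scales automatically to general $\Gamma$, at the cost of importing the flatness of noncommutative Ore localization as a black box; the paper's approach is more self-contained and exposes exactly the torsion computation that drives everything (it is effectively reproving flatness in this special case). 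Your parenthetical ``alternative direct argument'' in (ii) is in fact exactly what the paper does, so you have both versions covered.
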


\begin{proof}
Here we only consider the case when $\Gamma=\{\alpha\}$ for some $\alpha\in\Delta$, while the general case is similar. Note that
\[
1\otimes v=f_\alpha^{-1}\otimes f_\alpha v=\ldots=f_\alpha^{-m}\otimes f_\alpha^m v=...
\]
for any $v\in M$ and $m\in\bbN$. It is easy to see that $1\otimes v=0$ if and only if $f_\alpha^mv=0$ for some nonnegative integer $m$. Since $\Gamma=\{\alpha\}$, any $w\in M_\Gamma$ can be written as
\[
w=\sum_{i=0}^n{f_\alpha^{-i}u_i}\otimes v_i=f_\alpha^{-n}\left(1\otimes\sum_{i=0}^n{f_\alpha^{n-i}u_i} v_i\right)
\]
where $u_i\in U(\frg)$, $v_i\in M$ and $n\in\bbN$. Set $v=\sum_{i=0}^nf_\alpha^{n-i}u_i v_i$. Then $w=f_\alpha^{-n}(1\otimes v)$.

For the first statement, it suffices to prove that $\Gamma_{M,N}^*(\vf)(w)=0$ if $w=0$. In fact, we have $f_\alpha^{n}w=1\otimes v=0$ for $w=0$. Then there exists $m\in\bbN$ such that $f_\alpha^mv=0$. Therefore
\[
\Gamma_{M,N}^*(\vf)(w)=\sum_{i=0}^n{f_\alpha^{-i}u_i}\otimes \vf(v_i)=f_\alpha^{-n}\left(1\otimes \vf(v)\right)=f_\alpha^{-n-m}\left(1\otimes \vf(f_\alpha^{m}v)\right)=0.
\]
For the second statement, we need to show that $\Gamma_{M,N}^*(\vf)(w)=0$ yields $w=0$ for any injective $(\frg, \frh)$-homomorphism $\vf$. Indeed, if $\Gamma_{M,N}^*(\vf)(w)=0$, then
\[
0=f_\alpha^{n}\left(\Gamma_{M,N}^*(\vf)(w)\right)=f_\alpha^{n}\left(\sum_{i=0}^n{f_\alpha^{-i}u_i}\otimes \vf(v_i)\right)=1\otimes \vf(v).
\]
Thus $f_\alpha^{m}\vf(v)=\vf(f_\alpha^mv)=0$ for some $m\in\bbN$. We obtain $f_\alpha^mv=0$ since $\vf$ is injective. Therefore
\[
w=f_\alpha^{-n}(1\otimes v)=f_\alpha^{-n-m}\otimes f_\alpha^{m}v=0.
\]
The third statement follows from Lemma \ref{bij}.
\end{proof}

Let $\mathcal{C}_\Gamma$ be the category of $\Gamma$-bijective $(\frg, \frh)$-modules. Then $\mathcal{C}_\Gamma$ is abelian. Of course any $M\in\mathcal{C}_\Gamma$ may be regarded as a $(U_\Gamma, \frh)$-module since $M\simeq M_\Gamma$ as vector spaces and any $(\frg, \frh)$-homomorphism between modules in $\mathcal{C}_\Gamma$ can be naturally extended to a $(U_\Gamma, \frh)$-homomorphism. So $\mathcal{C}_\Gamma$ may also be viewed as a category of $(U_\Gamma, \frh)$-modules.

\begin{lemma}\label{lemtst2}
Given $I\in\mathcal{C}_\Gamma$, then $I$ is an injective $(\frg, \frh)$-module if and only if $I_\Gamma\simeq I$ is an injective $(U_\Gamma, \frh)$-module in $\mathcal{C}_\Gamma$. As a consequence, we have
\[
\Ext^i_{\frg, \frh}(M, N)\simeq\Ext^i_{\mathcal{C}_\Gamma}(M, N),
\]
for $M, N\in\mathcal{C}_\Gamma$ and $i\in\bbN$.
\end{lemma}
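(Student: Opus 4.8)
The plan is to prove the statement in two halves. First I would establish the equivalence: for $I \in \mathcal{C}_\Gamma$, $I$ is injective as a $(\frg,\frh)$-module if and only if it is injective as a $(U_\Gamma,\frh)$-module within $\mathcal{C}_\Gamma$. For the forward direction, suppose $I$ is injective as a $(\frg,\frh)$-module. Given a monomorphism $M \hookrightarrow N$ in $\mathcal{C}_\Gamma$ and a $(U_\Gamma,\frh)$-map $M \to I$, I would restrict it to a $(\frg,\frh)$-map, extend it along $M \hookrightarrow N$ using injectivity of $I$ to get a $(\frg,\frh)$-map $N \to I$, and then observe that since $N$ is $\Gamma$-bijective, every $(\frg,\frh)$-homomorphism out of $N$ is automatically a $(U_\Gamma,\frh)$-homomorphism (as noted just before the lemma in the paper) — hence the extension is the desired $(U_\Gamma,\frh)$-map. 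For the reverse direction, I would argue using Lemma \ref{lemtst}: embed $I$ into a $(\frg,\frh)$-injective $J$; then $J_\Gamma \simeq J$ is $(U_\Gamma,\frh)$-injective by the forward direction, the localized inclusion $I = I_\Gamma \hookrightarrow J_\Gamma$ stays injective by Lemma \ref{lemtst}(ii), and by injectivity of $I$ in $\mathcal{C}_\Gamma$ this inclusion splits as $(U_\Gamma,\frh)$-modules, exhibiting $I$ as a direct summand of $J$ in the $(\frg,\frh)$-module category; a direct summand of an injective is injective.

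Second, I would deduce the Ext isomorphism. The point is that $\mathcal{C}_\Gamma$ has enough injectives — indeed, for $M \in \mathcal{C}_\Gamma$, take a $(\frg,\frh)$-injective envelope (or just any injective embedding) $M \hookrightarrow I$; by Lemma \ref{ienv} (or Lemma \ref{iee2} together with localization) the relevant injective object lies in $\mathcal{C}_\Gamma$ and is $(U_\Gamma,\frh)$-injective by the equivalence just proved. So one can compute $\Ext^i_{\mathcal{C}_\Gamma}(M,N)$ using a resolution of $N$ by objects that are simultaneously $(\frg,\frh)$-injective and injective in $\mathcal{C}_\Gamma$. Applying $\Hom_{\frg,\frh}(M,-)$ versus $\Hom_{\mathcal{C}_\Gamma}(M,-) = \Hom_{U_\Gamma,\frh}(M,-)$ to this resolution gives the same complex, because for $\Gamma$-bijective modules the natural map $\Hom_{\frg,\frh}(M,N) \to \Hom_{U_\Gamma,\frh}(M_\Gamma,N_\Gamma)$ is an isomorphism by Lemma \ref{lemtst}(iii). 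Taking cohomology yields $\Ext^i_{\frg,\frh}(M,N) \simeq \Ext^i_{\mathcal{C}_\Gamma}(M,N)$.

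I would fill in one technical point carefully: that a $(\frg,\frh)$-injective resolution of $N \in \mathcal{C}_\Gamma$ can be chosen with all terms in $\mathcal{C}_\Gamma$. Starting from $N \hookrightarrow I^0$ with $I^0$ the injective envelope, Lemma \ref{ienv} gives $I^0 \in \mathcal{C}_\Gamma$ and $I^0/N \in \mathcal{C}_\Gamma$; iterating on $I^0/N$ produces the desired resolution entirely inside $\mathcal{C}_\Gamma$, with each term $(U_\Gamma,\frh)$-injective by the equivalence. This also confirms $\mathcal{C}_\Gamma$ has enough injectives so that $\Ext^i_{\mathcal{C}_\Gamma}$ is well-defined and computable by such resolutions.

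The main obstacle I anticipate is the reverse implication of the equivalence — showing that $(U_\Gamma,\frh)$-injectivity forces $(\frg,\frh)$-injectivity. The splitting argument requires knowing that the localized inclusion $I \hookrightarrow J$ remains a monomorphism in $\mathcal{C}_\Gamma$ (handled by Lemma \ref{lemtst}(ii)) and, more subtly, that a $(U_\Gamma,\frh)$-module splitting of $0 \to I \to J \to J/I \to 0$ is also a $(\frg,\frh)$-module splitting — which again uses that $\Gamma$-bijective modules have their $(\frg,\frh)$- and $(U_\Gamma,\frh)$-module structures tightly linked, so that the splitting map, being a $(U_\Gamma,\frh)$-map, restricts to a $(\frg,\frh)$-map. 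Once this bookkeeping about which category's morphisms one is in is tracked carefully, the rest is routine homological algebra.
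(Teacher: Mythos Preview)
Your proposal is correct, and the forward direction and the $\Ext$ comparison match the paper's argument essentially verbatim. The genuine difference lies in the reverse implication. The paper does not embed $I$ into an injective and split off; instead it verifies the lifting property for $I$ directly against an \emph{arbitrary} $(\frg,\frh)$-monomorphism $e:M\to N$ (with $M,N$ not assumed $\Gamma$-bijective): it localizes both $e$ and the test map $i:M\to I$, obtaining $e_\Gamma:M_\Gamma\to N_\Gamma$ (still injective by Lemma~\ref{lemtst}(\rmnum{2})) and $i_\Gamma:M_\Gamma\to I_\Gamma\simeq I$, uses the assumed injectivity of $I$ in $\mathcal{C}_\Gamma$ to produce $\pi_\Gamma:N_\Gamma\to I$, and then restricts via $\pi(w):=\pi_\Gamma(1\otimes w)$ to get the desired $(\frg,\frh)$-map $N\to I$. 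This argument uses only Lemma~\ref{lemtst} and the exactness of localization, whereas your splitting argument additionally needs Lemma~\ref{ienv} to ensure the ambient injective lies in $\mathcal{C}_\Gamma$. (Note that in your version you must take $J$ to be the injective \emph{envelope}, not an arbitrary injective: for a general injective $J$ one only knows $J_\Gamma\in\mathcal{C}_\Gamma$, not that $J_\Gamma$ is $(\frg,\frh)$-injective, so splitting $I$ off from $J_\Gamma$ would not finish.) Your route is perhaps more conceptual; the paper's route is more economical in its hypotheses for this step.
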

\begin{proof}
If $I$ is an injective $(\frg, \frh)$-module, then the injectivity of $I$ as a $(U_\Gamma, \frh)$-module is an immediate consequence of Lemma \ref{lemtst}(\rmnum{3}).

Now assume that $I\simeq I_\Gamma$ is an injective $(U_\Gamma, \frh)$-module in $\mathcal{C}_\Gamma$. For $(\frg, \frh)$-modules $M$ and $N$, suppose that we already have $(\frg, \frh)$-homomorphisms $i:M\ra I$ and $e: M\ra N$. Moreover, the map $e$ is injective. We need to find a $(\frg, \frh)$-homomorphism $\pi: N\ra I$ such that $\pi e=i$. With Lemma \ref{lemtst} in hand, denote $i_\Gamma:=\Gamma_{M, I}^*(i)$ and $e_\Gamma:=\Gamma_{M, N}^*(e)$. Then $e_\Gamma$ is injective by Lemma \ref{lemtst}(\rmnum{2}). Since $I$ is injective in $\mathcal{C}_\Gamma$, there exists a $(U_\Gamma, \frh)$-homomorphism $\pi_\Gamma:N_\Gamma\ra I_\Gamma$ such that $\pi_\Gamma e_\Gamma=i_\Gamma$.
\[
\xymatrix{
  M_\Gamma \ar[d]_{i_\Gamma} \ar[r]^{e_\Gamma} &  N_\Gamma \ar@{.>}[ld]_{\pi_\Gamma}     \\
  I_\Gamma }
\]
Since $I\simeq I_\Gamma$, define $\pi: N\ra I$ by $\pi(w):=\pi_\Gamma(1\otimes w)$ for any $w\in N$. Then $\pi$ is a $(\frg, \frh)$-homomorphism. By Lemma \ref{lemtst}, we get
\begin{equation*}
\begin{aligned}
\pi (e(v))=&\pi_\Gamma(1\otimes e(v))=\pi_\Gamma(e_\Gamma(1\otimes v))=(\pi_\Gamma e_\Gamma)(1\otimes v)\\
=&i_\Gamma(1\otimes v)=1\otimes i(v)\simeq i(v)
\end{aligned}
\end{equation*}
for $v\in M$. Therefore $I$ is an injective $(\frg, \frh)$-module.

For the second statement, it follows from Lemma \ref{ienv} that any $N\in\mathcal{C}_\Gamma$ has an injective resolution of $\Gamma$-bijective $(\frg, \frh)$-modules. It can also be viewed as an injective resolution in $C_\Gamma$ by the first statement. Then the equation of higher Ext groups follows from Lemma \ref{lemtst}(\rmnum{3}).
\end{proof}

\subsection{Mathieu's twisting functor} In \cite{Mat}, Mathieu's twisting functor plays a central role in the classification of irreducible admissible $(\frg, \frh)$-modules. Here we briefly describe it as follows. Suppose that the commuting set $\Gamma=\{\gamma_1, \gamma_2, \ldots, \gamma_l\}\subset\Delta$. There is a unique family of automorphisms $\Theta_{(x_1, \ldots, x_l)}:U_\Gamma\ra U_\Gamma$ with
\[
\Theta_{(x_1, \ldots, x_l)}(u)=f_{\gamma_1}^{x_1}\ldots f_{\gamma_l}^{x_l} uf_{\gamma_l}^{-x_l}\ldots f_{\gamma_1}^{-x_1},
\]
where all $x_i$'s are integers and $u\in U_\Gamma$. The map $(x_1, \ldots, x_l)\ra\Theta_{(x_1, \ldots, x_l)}(u)$ is a polynomial in $x_1, \ldots, x_l$ for any $u\in U_\Gamma$. So one can naturally extend the class of automorphisms to all $(x_1, \ldots, x_l)\in\bbC^l$. For $\nu=x_1\gamma_1+\ldots+x_l\gamma_l\in\frh^*$ and $(U_\Gamma, \frh)$-modules $M$, denote by $\Phi_\Gamma^\nu M$ the $(U_\Gamma, \frh)$-module $M$ twisted by the automorphism $\Theta_{(x_1, \ldots, x_l)}$. For $v\in M$, denote by $v^\nu$ the corresponding element in $\Phi_\Gamma^\nu M$. Then the action of $U_\Gamma$ on $\Phi_\Gamma^\nu M$ is given by
\[
u\cdot v^\nu=(\Theta_{(x_1, \ldots, x_l)}(u)v)^\nu,
\]
where $u\in U_\Gamma$ and $v\in M$. Then $\Phi_\Gamma^\nu$ is a functor on $\mathcal{C}_\Gamma$. Some standard facts about $\Phi_\Gamma^\nu$ include:

\begin{lemma}\label{lemtst3}
$\Phi_\Gamma^\nu$ is an exact and invertible functor on $\mathcal{C}_\Gamma$ which is viewed as a category of $(U_\Gamma, \frh)$-modules. In particular, $\Phi_\Gamma^\nu\circ \Phi_\Gamma^{-\nu}=\mathrm{Id}$.
\end{lemma}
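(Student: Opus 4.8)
The plan is to exploit the observation that $\Phi_\Gamma^\nu$ alters neither the underlying vector space of a module in $\mathcal{C}_\Gamma$ nor the underlying linear map of a morphism: it merely pre-composes the $U_\Gamma$-action with the algebra automorphism $\Theta:=\Theta_{(x_1,\dots,x_l)}$, where $\nu=x_1\gamma_1+\dots+x_l\gamma_l$. So the statement splits into checking that this ``twist along an automorphism'' is a well-defined exact endofunctor, and reading off invertibility from the composition law of the $\Theta$'s. First I would verify that $\Phi_\Gamma^\nu$ is a well-defined functor $\mathcal{C}_\Gamma\to\mathcal{C}_\Gamma$: for a morphism $\vf\colon M\to N$ the same linear map is a morphism $\Phi_\Gamma^\nu M\to\Phi_\Gamma^\nu N$, because $\vf(\Theta(u)v)=\Theta(u)\vf(v)$ is just $\vf(u'v)=u'\vf(v)$ with $u'=\Theta(u)$, and $\Phi_\Gamma^\nu$ is plainly additive. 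That $\Phi_\Gamma^\nu M$ lands back in $\mathcal{C}_\Gamma$ is where the commuting-set hypothesis is used: since the $f_{\gamma_i}$ commute pairwise one has $\Theta(f_{\gamma_j})=f_{\gamma_j}$ for every $j$ (clear for integral parameters, hence for all parameters by the polynomiality recorded before the lemma), so every $f_{\gamma_j}$ acts on $\Phi_\Gamma^\nu M$ exactly as on $M$, in particular bijectively; and since conjugating $h\in\frh$ by the $f_{\gamma_i}$ only shifts the $\frh$-weight ($\Theta(h)=h+\nu(h)$), $\Phi_\Gamma^\nu M$ is again a weight module, with $\mathrm{supp}(\Phi_\Gamma^\nu M)=\nu+\mathrm{supp}(M)$.

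For exactness, note that kernels and cokernels in $\mathcal{C}_\Gamma$ are the vector-space ones (carrying the induced action), and $\Phi_\Gamma^\nu$ does not touch underlying vector spaces or underlying linear maps; hence it sends any short exact sequence of $(U_\Gamma,\frh)$-modules to a sequence which is exact on underlying spaces, i.e.\ exact in $\mathcal{C}_\Gamma$. Together with additivity this gives exactness of $\Phi_\Gamma^\nu$.

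For invertibility, the crux is the additivity law $\Theta_{\mathbf{y}}\circ\Theta_{\mathbf{x}}=\Theta_{\mathbf{x}+\mathbf{y}}$ for the whole family of automorphisms. For $\mathbf{x},\mathbf{y}\in\bbZ^l$ this is immediate from $\Theta_{\mathbf{x}}(u)=F_{\mathbf{x}}\,u\,F_{\mathbf{x}}^{-1}$ with $F_{\mathbf{x}}=f_{\gamma_1}^{x_1}\cdots f_{\gamma_l}^{x_l}$, using $F_{\mathbf{x}}F_{\mathbf{y}}=F_{\mathbf{x}+\mathbf{y}}$ (again because the $f_{\gamma_i}$ commute); for arbitrary $\mathbf{x},\mathbf{y}\in\bbC^l$ both sides applied to a fixed $u\in U_\Gamma$ are polynomial in $(\mathbf{x},\mathbf{y})$ and agree on $\bbZ^l\times\bbZ^l$, hence agree identically. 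Taking $\mathbf{y}=-\mathbf{x}$ gives $\Theta_{\mathbf{x}}\circ\Theta_{-\mathbf{x}}=\Theta_{\mathbf{0}}=\mathrm{id}_{U_\Gamma}$. Unwinding the definitions, $u\cdot(v^{-\nu})^{\nu}=((uv)^{-\nu})^{\nu}$ for $v\in M$ and $u\in U_\Gamma$, so $v\mapsto(v^{-\nu})^{\nu}$ is an isomorphism $M\xrightarrow{\sim}\Phi_\Gamma^{\nu}\Phi_\Gamma^{-\nu}M$ of $(U_\Gamma,\frh)$-modules, natural in $M$, and under the evident conventions for the markers $v^{\pm\nu}$ it is the identity. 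Hence $\Phi_\Gamma^{\nu}\circ\Phi_\Gamma^{-\nu}=\mathrm{Id}$, and symmetrically $\Phi_\Gamma^{-\nu}\circ\Phi_\Gamma^{\nu}=\mathrm{Id}$; in particular $\Phi_\Gamma^\nu$ is invertible.

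The only genuinely delicate point is the repeated appeal to the polynomial-dependence statement recorded just before the lemma: it is needed both to know that each $\Theta_{\mathbf{x}}$ is an honest algebra automorphism of $U_\Gamma$ for non-integral $\mathbf{x}$, and to upgrade the integral identities $\Theta(f_{\gamma_j})=f_{\gamma_j}$ and $\Theta_{\mathbf{y}}\Theta_{\mathbf{x}}=\Theta_{\mathbf{x}+\mathbf{y}}$ to all complex parameters. Beyond that, the argument is pure bookkeeping; the one place where something must actually be checked is that $\Phi_\Gamma^\nu$ preserves $\Gamma$-bijectivity, which is precisely where the commuting-set condition on $\Gamma$ enters.
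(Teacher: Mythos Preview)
The paper does not supply a proof of this lemma; it is stated as one of the ``standard facts'' about $\Phi_\Gamma^\nu$ and left without argument. Your proposal is correct and fills in exactly the details one would expect: twisting by an algebra automorphism is tautologically exact and functorial, $\Gamma$-bijectivity is preserved because $\Theta(f_{\gamma_j})=f_{\gamma_j}$ (the $f_{\gamma_i}$ pairwise commute by the commuting-set hypothesis), and invertibility follows from the additivity $\Theta_{\mathbf{y}}\circ\Theta_{\mathbf{x}}=\Theta_{\mathbf{x}+\mathbf{y}}$, extended from integral to complex parameters by the polynomiality recorded just before the lemma. Your remark that kernels and cokernels in $\mathcal{C}_\Gamma$ coincide with the underlying vector-space ones is also correct (and uses that $f_\alpha$ is bijective on both source and target), so exactness is immediate.
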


As a consequence, we have the following result.

\begin{lemma}\label{lemtst4}
For $\nu\in\frh^*$, one has
\[
\Ext^i_{\mathcal{C}_\Gamma}(M, N)\simeq\Ext^i_{\mathcal{C}_\Gamma}(\Phi_\Gamma^\nu M, \Phi_\Gamma^\nu N),
\]
where $i\in\bbN$ and $M, N\in\mathcal{C}_\Gamma$.
\end{lemma}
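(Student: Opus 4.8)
The plan is to deduce Lemma~\ref{lemtst4} formally from Lemma~\ref{lemtst3}: the functor $\Phi_\Gamma^\nu$ is exact and invertible on the abelian category $\mathcal{C}_\Gamma$, hence an equivalence of abelian categories, and any such equivalence induces isomorphisms on all $\Ext$-groups. The first thing I would record is that $\Ext^i_{\mathcal{C}_\Gamma}$ is a genuine right derived functor: by Lemma~\ref{ienv} every $N\in\mathcal{C}_\Gamma$ has an injective envelope that is $\Gamma$-bijective, and by Lemma~\ref{lemtst2} such a module is injective as an object of $\mathcal{C}_\Gamma$; iterating produces an injective resolution $0\to N\to I^0\to I^1\to\cdots$ inside $\mathcal{C}_\Gamma$, so that $\Ext^i_{\mathcal{C}_\Gamma}(M,N)=H^i\bigl(\Hom_{\mathcal{C}_\Gamma}(M,I^\bullet)\bigr)$.

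Next I would show that $\Phi_\Gamma^\nu$ carries injectives to injectives. For an injective $I\in\mathcal{C}_\Gamma$, Lemma~\ref{lemtst3} gives an isomorphism of functors $\Hom_{\mathcal{C}_\Gamma}(-,\Phi_\Gamma^\nu I)\cong\Hom_{\mathcal{C}_\Gamma}\bigl(\Phi_\Gamma^{-\nu}(-),I\bigr)$, and the right-hand side is a composite of exact functors, hence exact; therefore $\Phi_\Gamma^\nu I$ is injective. Applying the exact functor $\Phi_\Gamma^\nu$ to the resolution above then yields an injective resolution $0\to\Phi_\Gamma^\nu N\to\Phi_\Gamma^\nu I^0\to\cdots$ of $\Phi_\Gamma^\nu N$ in $\mathcal{C}_\Gamma$. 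Finally, since $\Phi_\Gamma^\nu$ is fully faithful, the maps $\Hom_{\mathcal{C}_\Gamma}(M,I^j)\to\Hom_{\mathcal{C}_\Gamma}(\Phi_\Gamma^\nu M,\Phi_\Gamma^\nu I^j)$ are bijective and compatible with the differentials, so they assemble into an isomorphism of cochain complexes; passing to cohomology gives $\Ext^i_{\mathcal{C}_\Gamma}(M,N)\cong\Ext^i_{\mathcal{C}_\Gamma}(\Phi_\Gamma^\nu M,\Phi_\Gamma^\nu N)$, as desired.

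There is essentially no hard step here: the content lives in the earlier lemmas (that $\mathcal{C}_\Gamma$ is abelian with enough injectives, and that $\Phi_\Gamma^\nu$ is an exact equivalence), and what remains is the standard homological fact that an equivalence of abelian categories commutes with derived functors. The only place I would be slightly careful is checking that the natural transformation $\Hom_{\mathcal{C}_\Gamma}(M,-)\Rightarrow\Hom_{\mathcal{C}_\Gamma}(\Phi_\Gamma^\nu M,\Phi_\Gamma^\nu(-))$ really is compatible with the coboundary maps of the resolution, which is immediate from the functoriality of $\Phi_\Gamma^\nu$. One could instead shorten the argument by invoking the abstract statement about equivalences and derived functors directly, but writing out the injective-resolution computation keeps this section self-contained.
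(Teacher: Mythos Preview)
Your argument is correct and is precisely the standard unpacking of the paper's one-line claim: the paper records Lemma~\ref{lemtst4} as an immediate consequence of Lemma~\ref{lemtst3} (exactness and invertibility of $\Phi_\Gamma^\nu$), and you have written out the routine homological justification that an exact equivalence of abelian categories with enough injectives preserves $\Ext$-groups. There is no divergence in approach, only in level of detail.
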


In view Lemma \ref{ienv}, if $M$ and $N$ are $\Gamma$-bijective $(\frg, \frh)$-modules, then $\Ext_{\frg, \frh}^*(M,N)$ can be calculated within the category $\mathcal{C}_\Gamma$ using injective resolution of $\Gamma$-bijective $(\frg, \frh)$-modules. Combining Lemma \ref{lemtst2} and Lemma \ref{lemtst4}, we immediately get the following proposition.

\begin{prop}\label{proptst2}
For $\nu\in\frh^*$, $i\in\bbN$ and $M, N\in\mathcal{C}_\Gamma$, one has
\[
\Ext^i_{\frg, \frh}(M, N)\simeq\Ext^i_{\frg, \frh}(\Phi_\Gamma^{\nu} M, \Phi_\Gamma^\nu N).
\]
\end{prop}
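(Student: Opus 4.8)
The plan is to chain together the results just established. Proposition \ref{proptst2} is essentially a restatement, so the real content is the identification $\Ext^i_{\frg,\frh}(M,N)\simeq\Ext^i_{\mathcal{C}_\Gamma}(M,N)$ for $\Gamma$-bijective $M,N$, combined with the twisting-invariance $\Ext^i_{\mathcal{C}_\Gamma}(M,N)\simeq\Ext^i_{\mathcal{C}_\Gamma}(\Phi_\Gamma^\nu M,\Phi_\Gamma^\nu N)$. First I would invoke Lemma \ref{lemtst2} to pass from $\Ext^i_{\frg,\frh}$ to $\Ext^i_{\mathcal{C}_\Gamma}$: since $M,N\in\mathcal{C}_\Gamma$, Lemma \ref{ienv} guarantees an injective resolution of $N$ by $\Gamma$-bijective $(\frg,\frh)$-modules, and Lemma \ref{lemtst2} says such a resolution is simultaneously an injective resolution in $\mathcal{C}_\Gamma$, so both $\Ext$-groups are computed by the same complex $\Hom_{\frg,\frh}(M,I_\bullet)\simeq\Hom_{\mathcal{C}_\Gamma}(M,I_\bullet)$ (the iso of Hom-spaces being Lemma \ref{lemtst}(iii)).

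Next I would apply Lemma \ref{lemtst4} to get $\Ext^i_{\mathcal{C}_\Gamma}(M,N)\simeq\Ext^i_{\mathcal{C}_\Gamma}(\Phi_\Gamma^\nu M,\Phi_\Gamma^\nu N)$; this is where the exactness and invertibility of $\Phi_\Gamma^\nu$ from Lemma \ref{lemtst3} is used — an exact autoequivalence carries an injective resolution to an injective resolution and preserves $\Hom$-spaces, hence preserves derived functors. Finally, since $\Phi_\Gamma^\nu M$ and $\Phi_\Gamma^\nu N$ are again objects of $\mathcal{C}_\Gamma$ (the functor lands in $\mathcal{C}_\Gamma$), I would apply Lemma \ref{lemtst2} a second time, in the reverse direction, to return to $\Ext^i_{\frg,\frh}(\Phi_\Gamma^\nu M,\Phi_\Gamma^\nu N)$. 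Composing the three isomorphisms yields the claim.

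The one point that needs a little care — and the closest thing to an obstacle — is making sure the chain of isomorphisms is legitimate, i.e. that at each stage the relevant modules genuinely lie in $\mathcal{C}_\Gamma$ so that Lemma \ref{lemtst2} applies. For the middle step this is exactly the assertion in Lemma \ref{lemtst3} that $\Phi_\Gamma^\nu$ is a functor \emph{on} $\mathcal{C}_\Gamma$, so $\Phi_\Gamma^\nu M,\Phi_\Gamma^\nu N\in\mathcal{C}_\Gamma$; thus there is really nothing further to prove. Since all three ingredients (Lemmas \ref{lemtst2}, \ref{lemtst3}, \ref{lemtst4}) are already in hand, the proof is a two-line concatenation, and I expect no genuine difficulty beyond bookkeeping.
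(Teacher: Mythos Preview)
Your proposal is correct and follows exactly the paper's approach: apply Lemma \ref{lemtst2} to identify $\Ext^i_{\frg,\frh}$ with $\Ext^i_{\mathcal{C}_\Gamma}$, invoke Lemma \ref{lemtst4} (which rests on Lemma \ref{lemtst3}) for the twisting invariance, and then apply Lemma \ref{lemtst2} again in the other direction. The paper presents this as a one-line concatenation of Lemmas \ref{lemtst2} and \ref{lemtst4}, and your write-up simply unpacks the same chain with a bit more detail.
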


\begin{remark}
The case $i=1$ of the above proposition is evident and well-known. For example, it is used to prove the main theorem in \cite{BKLM} which states that any admissible cuspidal $\frsp(2n, \bbC)$-module is reducible for $n>1$.
\end{remark}

The \emph{degree} $d(M)$ of an admissible $(\frg, \frh)$-module $M$ is the maximal dimension of its weight spaces: it is an integer $\geq0$, or $+\infty$. Mathieu gave a classification of simple cuspidal modules in $\caM(\frg, \frh)$ based on the following result.

\begin{lemma}\label{cusp2}
If $\frg$ is simple and $M\in\caM(\frg, \frh)$ is a simple cuspidal module, then there exist $\Gamma\subset\Delta$ and $\nu\in\frh^*$ such that $\Phi_\Gamma^{\nu} M$ contains a simple highest weight module of degree $d(M)$.
\end{lemma}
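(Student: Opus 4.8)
The plan is to derive the lemma from Mathieu's classification of simple cuspidal modules in \cite{Mat}, rephrased in terms of the twisting functors $\Phi_\Gamma^\nu$ set up above. I would begin with bookkeeping. Since $\frg$ is simple and $M$ is cuspidal, Theorem \ref{fernando} forces $\frg$ to be of type $A$ or $C$. By Lemma \ref{cusp1}, $\mathrm{supp} M=\lambda_0+Q$ and $\dim M_\mu=d(M)$ for every $\mu\in\mathrm{supp} M$; hence, for any commuting set $\Gamma\subseteq\Delta$ and any $\gamma\in\Gamma$, the vector $f_\gamma$ --- a root vector for $-\gamma$, hence proportional to $e_{-\gamma}$ --- acts injectively on $M$ by cuspidality, and then bijectively, since it carries $M_\mu$ onto $M_{\mu-\gamma}$, a space of the same finite dimension. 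Thus $M$ is $\Gamma$-bijective, that is $M\in\mathcal{C}_\Gamma$, for every commuting $\Gamma$, so the whole apparatus of Section 4 applies to $M$. Finally, from the definition of $\Theta_{(x_1,\ldots,x_l)}$ one checks that it fixes $\frh$ up to the scalar $\nu$; since $\Phi_\Gamma^\nu$ is the identity on underlying vector spaces and shifts every weight by $\nu$, the module $\Phi_\Gamma^\nu M$ is again a weight module with finite weight multiplicities, with support $\lambda_0+\nu+Q$ and $d(\Phi_\Gamma^\nu M)=d(M)$.

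Next I would fix a suitable $\Gamma$. In type $A_{n-1}$ take $\Gamma=\{\ve_1-\ve_2,\ldots,\ve_1-\ve_n\}$, and in type $C_n$ take $\Gamma=\{2\ve_1,\ldots,2\ve_n\}$; in each case $\Gamma\subset\Delta$ is a commuting set whose elements form a basis of $\frh^*$. Fix a Borel subalgebra $\frb=\frh\oplus\frn$ with $\Gamma\subset\Delta^+$, and write $\frn^-$ for the nilradical of the opposite Borel, so that each $f_\gamma$, $\gamma\in\Gamma$, lies in $\frn^-$. As $\nu$ ranges over $\frh^*$, the twists $\Phi_\Gamma^\nu M$ --- simple cuspidal for generic $\nu$ --- together with their degenerations assemble the irreducible coherent extension $\widehat M$ of $M$; and because $(x_1,\ldots,x_l)\mapsto\Theta_{(x_1,\ldots,x_l)}(u)$ is polynomial, the action of any $u\in U(\frg)$ commuting with $\frh$ on a fixed weight space of $\widehat M$ depends polynomially on $\nu$. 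In particular the ranks of the operators $e_\alpha$, $\alpha\in\Delta^+$, on the relevant weight spaces are polynomial in $\nu$ and so can drop only on a proper subvariety of parameters.

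The crux is to locate the right parameter. For $\nu$ in a Zariski-dense subset of $\frh^*$ the module $\Phi_\Gamma^\nu M$ remains simple cuspidal; the substance of Mathieu's argument is that $\nu$ can instead be chosen so that $\Phi_\Gamma^\nu M$ ceases to be cuspidal in a controlled way and acquires a highest weight vector $v$ (so $\frn\cdot v=0$). The cyclic submodule $U(\frg)v$ then has support bounded above and finite weight multiplicities, hence lies in category $\caO$, and therefore contains a simple highest weight submodule $L\subseteq\Phi_\Gamma^\nu M$. Mathieu's analysis furthermore pins $\nu$ down so that $L$ is infinite-dimensional and $\widehat M$ is its (unique irreducible) coherent extension; since $d(\widehat M)=d(M)$ --- $\widehat M$ being an irreducible coherent family extending $M$, which by Lemma \ref{cusp1} has all weight multiplicities equal to $d(M)$ --- a comparison of degrees through the coherent extension forces $d(L)=d(M)$. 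With this $\Gamma$ and this $\nu$, the lemma follows.

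The decisive difficulty is exactly the existence of such a useful singular parameter --- equivalently, that the coherent extension of a simple cuspidal module must contain, on some coset of $Q$, an infinite-dimensional highest weight submodule of full degree $d(M)$. This is the technical core of Mathieu's classification: it rests on his structure theory of coherent families and of the twisting functors and on combinatorics specific to types $A$ and $C$, so I would invoke \cite{Mat} (together with the complementary results of \cite{BKLM, GS1, GS2}) for this step rather than reprove it. The remaining ingredients --- the reductions of the first two paragraphs and the category-$\caO$ extraction and degree comparison of the third --- are routine given Sections 3 and 4.
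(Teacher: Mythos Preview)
Your proposal is correct and takes essentially the same approach as the paper: the paper's proof is a one-sentence citation to Mathieu's classification (specifically Lemma~4.5, Proposition~4.8, and Proposition~6.2 of \cite{Mat}), and your argument likewise defers the decisive step --- the existence of a parameter $\nu$ at which $\Phi_\Gamma^\nu M$ acquires a simple highest weight submodule of full degree --- to \cite{Mat}. The additional bookkeeping you supply (that $M$ is $\Gamma$-bijective for any commuting $\Gamma$, that twisting shifts weights by $\nu$ and preserves the degree, and the explicit commuting sets in types $A$ and $C$) is accurate and helpful exposition, but it does not change the logical structure, which in both cases rests on Mathieu's coherent-family machinery.
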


\begin{proof}
The existence is given by Mathieu's classification of coherent families; see Lemma 4.5., Proposition 4.8. and Proposition 6.2. in \cite{Mat}.
\end{proof}

\begin{lemma}\label{cusp3}
If $\frg\neq\frh$ and $M\in\caM(\frg, \frh)$ is a simple cuspidal module, then there exist $\Gamma\subset\Delta$ and $\nu\in\frh^*$ such that $\Phi_\Gamma^{\nu} M$ contains no cuspidal subquotient.
\end{lemma}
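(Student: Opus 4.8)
The plan is to reduce the general case to the simple case studied in Lemma \ref{cusp2}. First I would decompose $\frg$ as a direct sum of its center $\zz$ and simple ideals $\frg_1, \ldots, \frg_r$. Since $M\in\caM(\frg,\frh)$ is simple cuspidal, restriction and the Künneth-type factorization of simple modules over a direct sum show $M\cong M_1\otimes\cdots\otimes M_r$ (tensored with a one-dimensional $\zz$-module), where each $M_j$ is a simple $(\frg_j, \frh_j)$-module that is either cuspidal or one-dimensional, and at least one factor $M_{j_0}$ is genuinely cuspidal (otherwise $M$ would be one-dimensional, hence not cuspidal as $\frg\neq\frh$). A commuting set $\Gamma\subset\Delta(\frg_{j_0})$ and twist $\nu\in\frh_{j_0}^*$ for the simple ideal $\frg_{j_0}$ extend to $\frg$ by acting trivially on the other ideals, and $\Phi_\Gamma^\nu M \cong M_1\otimes\cdots\otimes(\Phi_\Gamma^\nu M_{j_0})\otimes\cdots\otimes M_r$; a cuspidal subquotient of this must restrict to a cuspidal subquotient in the $\frg_{j_0}$-factor. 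So it suffices to treat the case $\frg$ simple.

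Now assume $\frg$ is simple and $M$ is simple cuspidal. By Theorem \ref{fernando}, $\frg$ is of type $A$ or $C$. By Lemma \ref{cusp2} there exist $\Gamma\subset\Delta$ and $\nu\in\frh^*$ so that $N:=\Phi_\Gamma^\nu M$ contains a simple highest weight submodule (or subquotient) $L$ of degree $d(M)$. The next step is to argue that $N$ itself has no cuspidal subquotient. The key point is a support/degree obstruction: any cuspidal subquotient $Q$ of $N$ would be an admissible cuspidal $(\frg,\frh)$-module with $\dim Q_\mu = d(Q)$ constant on $\mathrm{supp}\,Q = \mu + Q$ by Lemma \ref{cusp1}, and $d(Q)\leq d(N)=d(M)$ since twisting by $\Phi_\Gamma^\nu$ preserves the degree (it is a bijection on weight spaces, shifting weights). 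On the other hand a highest weight module $L$ of degree $d(M)$ forces, via the weight-space count in the composition series of $N$, the cuspidal subquotient to have smaller degree — more precisely, removing the contribution of $L$ from a sufficiently deep region of the weight lattice leaves too few vectors for a full coherent family of degree $d(M)$. I would make this precise by choosing a weight $\lambda$ deep in a cone where $L$ already "uses up" $d(M)$-dimensional weight spaces, noting that $L$ being highest weight is not $f_{-\alpha}$-bijective for $\alpha$ in the relevant simple system, whereas a cuspidal $Q$ would be $f_{-\alpha}$-bijective for all $\alpha$, so $Q$ cannot be a subquotient containing or overlapping $L$ in those weights, and the remaining weight multiplicity in that cone is strictly less than $d(M)$.

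An alternative, cleaner route — and the one I would actually write — avoids the delicate counting: use that $L$ is a highest weight module, so there is a Borel $\frb=\frh\oplus\frn$ with $\frn$ acting locally nilpotently on $L$, i.e.\ $e_\alpha$ is \emph{not} injective on $L$ for the simple roots $\alpha$. Since $L$ is a submodule (or quotient) of $N$ and $N$ has finite length (Theorem \ref{fernando}, as $\Phi_\Gamma^\nu$ preserves finite length on $\caC_\Gamma$), every simple subquotient of $N$ other than those "in the same block" as $L$ must still be compatible with the coherent-family structure; Mathieu's classification (Lemma 4.5, Proposition 6.2 in \cite{Mat}) in fact gives that the semisimplification of such an $N$ is a sum of one highest weight module and modules of strictly smaller degree, none cuspidal of full degree, and one then iterates: any cuspidal subquotient would again be simple cuspidal of degree $\leq d(M)$, and by induction on degree we may assume the result, or directly invoke that a simple cuspidal subquotient of $\Phi_\Gamma^\nu M$ would pull back under $\Phi_\Gamma^{-\nu}$ (Lemma \ref{lemtst3}) to a simple cuspidal subquotient of $M$ of degree $d(M)$, hence equal to $M$, forcing $M\cong\Phi_\Gamma^{-\nu}L$ to be highest weight after twisting — contradicting that a highest weight module is never cuspidal (its highest weight vector is killed by all $e_\alpha$).

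The main obstacle is the last point: ruling out that $\Phi_\Gamma^\nu M$ still has a cuspidal subquotient \emph{of full degree}. Everything hinges on the fact that $\Phi_\Gamma^\nu M$ acquires a highest weight simple constituent $L$ of degree exactly $d(M)$, so $L$ "saturates" the coherent family and no cuspidal constituent of the same degree can coexist; made rigorous, this is precisely the content of Mathieu's structural results on twisted cuspidal modules, and I would cite Lemma 4.5, Proposition 4.8, Proposition 6.2, and the discussion of the semisimplification in \cite{Mat} rather than reprove the counting. The reduction to $\frg$ simple and the compatibility of $\Phi_\Gamma^\nu$ with tensor factors, finite length, and degree are routine and I would dispatch them quickly.
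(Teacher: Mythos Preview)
Your overall strategy matches the paper's: reduce to simple $\frg$ via the tensor-product decomposition of simple weight modules (Fernando, Lemma 4.5 in \cite{F}), invoke Lemma \ref{cusp2} to produce a highest weight constituent $L$ of degree $d(M)$ inside $\Phi_\Gamma^\nu M$, and then use a degree count to rule out cuspidal subquotients. So the architecture is right.

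Where you overshoot is the degree argument itself. The paper dispatches it in one line, and you should too. Twisting preserves weight-space dimensions, so $d(\Phi_\Gamma^\nu M)=d(M)$. If $N$ is a simple cuspidal subquotient of $\Phi_\Gamma^\nu M$, then by Lemma \ref{cusp1} its support is the \emph{entire} coset $\lambda+Q$ and $\dim N_\mu=d(N)$ for every $\mu$ in that coset. Since $L$ is highest weight and $N$ is cuspidal, $L\not\simeq N$; hence at any weight $\mu$ with $\dim L_\mu=d(L)=d(M)$ the additivity of multiplicities over a Jordan--H\"older series gives
\[
d(N)=\dim N_\mu\le d(\Phi_\Gamma^\nu M)-\dim L_\mu=d(M)-d(M)=0,
\]
a contradiction. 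There is no need for a ``deep cone,'' no iteration, and no induction on degree.

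Your ``alternative cleaner route'' has genuine gaps and you should drop it. First, the pullback step is not well-posed: $\Phi_\Gamma^{-\nu}$ is an equivalence on $\caC_\Gamma$, but a $(\frg,\frh)$-subquotient $Q=A/B$ of $\Phi_\Gamma^\nu M$ need not lie in $\caC_\Gamma$ (the intermediate submodules $A,B$ need not be $\Gamma$-bijective), so you cannot simply apply $\Phi_\Gamma^{-\nu}$ to produce a subquotient of $M$. Second, even granting this, twisting does not preserve cuspidality, so ``pulls back to a simple cuspidal subquotient of $M$'' is unjustified. Third, your final sentence conflates $L$ with the hypothetical cuspidal subquotient: from $\Phi_\Gamma^{-\nu}Q\cong M$ you would get $Q\cong\Phi_\Gamma^\nu M$, not $M\cong\Phi_\Gamma^{-\nu}L$.
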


\begin{proof}
If $\frg$ is simple, by Lemma \ref{cusp2}, we can find $\Gamma\subset\Delta$ and $\nu\in\frh^*$ such that $\Phi_\Gamma^{\nu} M$ contains a highest weight module $L$ of degree $d(M)$. If $N$ is a cuspidal subquotient of $\Phi_\Gamma^{\nu} M$, then $d(N)\leq d(\Phi_\Gamma^{\nu} M)-d(L)=d(M)-d(M)=0$ by Lemma \ref{cusp1}, a contradiction. If $\frg$ is not simple, let $\frg=\frg_1\oplus\frg_2$, where $\frg_i$ is the Lie subalgebra with Cartan subalgebra $\frh_i=\frg_i\cap\frh$ and root system $\Delta_i$ for $i=1, 2$. In particular, let $\frg_1$ to be simple. It follows from Lemma 4.5 in \cite{F} that there are irreducible cuspidal module $M_i\in\caM(\frg_i, \frh_i)$ such that $M\simeq M_1\otimes M_2$. Since $\frg_1$ is simple, there exist $\Gamma_1 \in \Delta_1$ and $\nu_1 \in\frh_1^*$ such that $\Phi_{\Gamma_1}^{\nu_1} M_1$ contains no cuspidal $(\frg_1, \frh_1)$-subquotient. It is evident that $\frh_1^*$ can be viewed as a subspace of $\frh^*$ and $\Delta_1=\Delta\cap\frh^*_1$. Setting $\Gamma=\Gamma_1$ and $\nu=\nu_1$, then $\Phi_\Gamma^{\nu} M\simeq \Phi_{\Gamma_1}^{\nu_1} M_1\otimes M_2$. If $N$ is a simple cuspidal subquotient of $\Phi_\Gamma^{\nu} M$, again by Lemma 4.5 in \cite{F} we have $N\simeq N_1\otimes N_2$, where $N_i$ is a cuspidal $(\frg_i, \frh_i)$-module for $i=1, 2$. But $N_1$ is a $(\frg_1, \frh_1)$-subquotient of $\Phi_{\Gamma_1}^{\nu_1} M_1$ which contains no cuspidal subquotient, a contradiction.
\end{proof}
%
%%%%%%%%%%%%%%%%%%%%%%%%%%%%%%%%%%%%%%%%%%%%%%%%%%%%%%%%%%%%%%%%%%%%
%
\section{Parabolic induced modules and dual modules}
%
%%%%%%%%%%%%%%%%%%%%%%%%%%%%%%%%%%%%%%%%%%%%%%%%%%%%%%%%%%%%%%%%%%%%
%

In this section, we give equations of higher Ext groups for parabolic induced weight modules and dual weight modules.

\subsection{Euler-Poincar\'{e} pairing for parabolic induced modules}
Let $M$ and $N$ be two finitely generated admissible $(\frg, \frl)$-module, that is, $M, N\in\caM(\frg, \frl)$. The Euler-Poincar\'{e} pairing of them is defined by
\begin{equation}\label{epp}
\mathrm{EP}_{\frg,\frl}(M, N):=\sum_{i\in\bbN}(-1)^i\dim\Ext_{\frg,\frl}^i(M, N).
\end{equation}
The pairing is well-defined because of the following lemma (see \cite{BW}, Proposition 2.8).

\begin{lemma}
The extension group $\Ext_{\frg,\frl}^*(M, N)$ is finite dimensional for finitely generated admissible $(\frg, \frl)$-modules $M$ and $N$.
\end{lemma}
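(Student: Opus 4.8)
The plan is to compute $\Ext^\bullet_{\frg,\frl}(M,N)$ from an explicit projective resolution of $M$, working in the ambient category of \emph{all} $(\frg,\frl)$-modules rather than inside $\caM(\frg,\frl)$, so that the resulting cochain complex has finite-dimensional terms as soon as $N$ is admissible.

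First I would set up the projectives. For a finite-dimensional $\frl$-module $E$ put $P(E):=U(\frg)\otimes_{U(\frl)}E$. By the Poincar\'{e}--Birkhoff--Witt theorem, applied to an ordered basis of $\frg=\bar\fru\oplus\frl\oplus\fru$ in which $\frl$ comes last, $P(E)$ is, as an $\frl$-module, an increasing union of finite-dimensional $\mathrm{ad}(\frl)$-stable pieces, hence locally $\frl$-finite, and $\frl$-semisimple since $\frl$ is reductive; so $P(E)$ is a genuine $(\frg,\frl)$-module, although in general it is not admissible and hence not an object of $\caM(\frg,\frl)$. The tensor--hom adjunction (Frobenius reciprocity) gives a natural isomorphism $\Hom_{\frg,\frl}(P(E),V)\cong\Hom_{\frl}(E,V)$ for every $(\frg,\frl)$-module $V$, and $V\mapsto\Hom_{\frl}(E,V)$ is exact because restriction to $\frl$ is exact and $\frl$-semisimple modules form a semisimple category; thus $P(E)$ is projective in the category of $(\frg,\frl)$-modules. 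So this category has enough projectives, which together with the fact (already cited) that it has enough injectives means $\Ext^\bullet_{\frg,\frl}$ is a balanced derived functor and may be computed from a projective resolution of the first variable.

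Next I would resolve $M$. Being finitely generated over $U(\frg)$ and locally $\frl$-finite, $M$ is generated by a finite-dimensional $\frl$-submodule $E_0$, giving a surjection $P(E_0)\twoheadrightarrow M$; its kernel is finitely generated over $U(\frg)$ because $U(\frg)$ is Noetherian and $P(E_0)$ is finitely generated over $U(\frg)$, and it is again locally $\frl$-finite. Iterating, one obtains an exact complex $\cdots\to P(E_1)\to P(E_0)\to M\to 0$ with each $E_j$ a finite-dimensional $\frl$-module. Applying $\Hom_{\frg,\frl}(-,N)$ and the adjunction above yields
\[
\Ext^i_{\frg,\frl}(M,N)\cong H^i\bigl(\Hom_{\frl}(E_\bullet,N)\bigr),\qquad i\in\bbN.
\]
Since $E_j$ is a finite direct sum of irreducible $\frl$-modules and $N$ is admissible, $\Hom_{\frl}(E_j,N)$ is a finite direct sum of spaces whose dimensions equal the (finite) multiplicities in $N$ of the irreducible constituents of $E_j$; hence each term of $\Hom_{\frl}(E_\bullet,N)$ is finite-dimensional and so is every cohomology group $\Ext^i_{\frg,\frl}(M,N)$.

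\textbf{The one point requiring care}, and the reason the statement is not a purely formal consequence of working inside $\caM(\frg,\frl)$, is that the natural projective resolution of $M$ leaves the admissible category: the modules $P(E_j)$ are infinite-dimensional and non-admissible. One must therefore argue in the larger category of $(\frg,\frl)$-modules, invoke Noetherianity of $U(\frg)$ to keep the resolution finitely generated at every stage, and use admissibility only of the \emph{target} $N$, via $\dim\Hom_{\frl}(E_j,N)<\infty$. (Finite generation of $M$ is what guarantees one may take each $E_j$ finite-dimensional; admissibility of $M$ itself is not needed for the argument, though it is part of the hypothesis.)
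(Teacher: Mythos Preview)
Your argument is correct and is essentially the standard proof that the paper cites from \cite{BW}, Chapter~\Rmnum{1}, Proposition~2.8; the paper does not supply its own proof of this lemma. One small imprecision worth tightening: the claim that $P(E)$ is $\frl$-semisimple ``since $\frl$ is reductive'' is not true for arbitrary finite-dimensional $E$, because the center $Z_\frl$ need not act semisimply. What makes it work here is that $Z_\frl\subset\frh$, so $\mathrm{ad}(Z_\frl)$ acts semisimply on $U(\frg)$, and in your resolution each $E_j$ arises as an $\frl$-submodule of a $(\frg,\frl)$-module and is therefore itself semisimple; with $E$ semisimple, $P(E)$ is indeed an $\frl$-semisimple $(\frg,\frl)$-module. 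With that caveat, the rest (Frobenius reciprocity, Noetherianity of $U(\frg)$ to keep the kernels finitely generated, admissibility of $N$ to make each $\Hom_\frl(E_j,N)$ finite-dimensional) is exactly right.
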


First, we prove the following proposition.

\begin{prop}\label{pind}
Suppose that $M\in\caM(\frg, \frh)$ and $V\in\caM(\frl, \frh)$. Then
\[
\mathrm{EP}_{\frg, \frh}(M_\frp(V), M)=\sum_{i\in\bbN}(-1)^i\mathrm{EP}_{\frl, \frh}(V, H^i(\fru, M)).
\]
\end{prop}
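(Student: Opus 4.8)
The plan is to compute both sides of the asserted identity by expressing everything in terms of the Euler--Poincar\'{e} characteristic in a suitable category, and then match the two computations via Frobenius reciprocity and a Hochschild--Serre type spectral sequence. First I would invoke Frobenius reciprocity for the induction functor $M_\frp(-)=U(\frg)\otimes_{U(\frp)}(-)$: since $M_\frp(V)$ is induced from the $\frp$-module $V$ (with $\fru$ acting trivially), one has a natural isomorphism $\Hom_{\frg,\frh}(M_\frp(V),M)\simeq\Hom_{\frp,\frh}(V,M)$. Because $M_\frp(-)$ carries projectives to projectives (equivalently, $U(\frg)$ is free over $U(\frp)$, so induction is exact and sends projective resolutions to projective resolutions), this upgrades to a natural isomorphism of derived functors
\[
\Ext^i_{\frg,\frh}(M_\frp(V),M)\simeq\Ext^i_{\frp,\frh}(V,M)
\]
for all $i\in\bbN$. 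Taking the alternating sum over $i$ gives $\mathrm{EP}_{\frg,\frh}(M_\frp(V),M)=\mathrm{EP}_{\frp,\frh}(V,M)$, reducing the problem to a statement purely about $\frp$-modules.

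The second step is to descend from $\frp$ to its Levi quotient $\frl=\frp/\fru$. Here I would use the Hochschild--Serre spectral sequence for the ideal $\fru\subset\frp$ with quotient $\frl$: for the $\frp$-module $V$ (on which $\fru$ acts trivially) and a $\frp$-module $M$,
\[
E_2^{p,q}=\Ext^p_{\frl,\frh}\bigl(V,\,H^q(\fru,M)\bigr)\ \Longrightarrow\ \Ext^{p+q}_{\frp,\frh}(V,M).
\]
The point that makes the $E_2$-term clean is precisely that $\fru$ acts trivially on $V$, so $\Ext^p_{\frp,\frh}(V,M)$ is built from $\frl$-cohomology of the $\fru$-cohomology groups $H^q(\fru,M)$; one also needs that each $H^q(\fru,M)$ lies in $\caM(\frl,\frh)$, which holds because $M$ is admissible and $\fru$ is finite-dimensional, so the $\fru$-cohomology is computed by the finite Koszul-type complex $\Hom(\bigwedge^\bullet\fru,M)$ and inherits admissibility. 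Since all $\Ext$-spaces in sight are finite-dimensional (by the cited Lemma), the spectral sequence yields the Euler-characteristic identity
\[
\mathrm{EP}_{\frp,\frh}(V,M)=\sum_{p,q}(-1)^{p+q}\dim E_2^{p,q}=\sum_{q\in\bbN}(-1)^q\,\mathrm{EP}_{\frl,\frh}\bigl(V,H^q(\fru,M)\bigr),
\]
which is exactly the right-hand side of the proposition. Combining with the first step completes the proof.

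\textbf{Main obstacle.} The routine part is Frobenius reciprocity and the bookkeeping of signs in the Euler characteristic of a spectral sequence (standard, since everything is finite-dimensional). The delicate point is establishing the Hochschild--Serre spectral sequence in this exact setting and identifying its $E_2$-page: one must check that $\Hom_{\frp,\frh}(V,-)$ factors as $\Hom_{\frl,\frh}(V,-)\circ(-)^{\fru}$ on the appropriate categories, that $(-)^{\fru}$ has right derived functors computing $H^\bullet(\fru,-)$ compatibly with the $\frh$-semisimplicity constraint, and that the Grothendieck composite-functor spectral sequence applies (which requires that $(-)^{\fru}$ sends injective $(\frp,\frh)$-modules to $\Hom_{\frl,\frh}(V,-)$-acyclic objects). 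An alternative that sidesteps constructing the full spectral sequence is to argue directly on Euler characteristics: build a projective resolution $P_\bullet\to V$ in $\caM(\frl,\frh)$, note $M_\frp(P_\bullet)=U(\frg)\otimes_{U(\frl)}P_\bullet$ (via $U(\frp)=U(\fru)\otimes U(\frl)$ as $\frl$-modules, or rather use that induction in stages gives $M_\frp(P_i)$ with a Koszul filtration whose graded pieces involve $\bigwedge^\bullet\fru\otimes P_i$), and compute $\Ext^\bullet_{\frg,\frh}(M_\frp(V),M)$ by a double complex whose two filtrations give the two sides; taking Euler characteristics of a bounded double complex of finite-dimensional spaces is unambiguous and avoids convergence subtleties. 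I expect the paper uses one of these two routes; either way the heart of the matter is the interplay of induction with $\fru$-cohomology, and the finite-dimensionality of all the $\Ext$-groups is what makes the alternating sums manipulable.
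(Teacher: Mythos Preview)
Your proposal is correct and matches the paper's proof essentially line for line: the paper first uses Frobenius reciprocity (exactness of induction plus preservation of projectives) to get $\Ext^i_{\frg,\frh}(M_\frp(V),M)\simeq\Ext^i_{\frp,\frh}(V,M)$, and then sets up exactly the Grothendieck/Hochschild--Serre spectral sequence $E_2^{i,j}=\Ext^i_{\frl,\frh}(V,H^j(\fru,M))\Rightarrow\Ext^{i+j}_{\frp,\frh}(V,M)$ by factoring $\Hom_{\frp,\frh}(V,-)=\Hom_{\frl,\frh}(V,-)\circ(-)^{\fru}$ and checking that $(-)^{\fru}$ sends injectives to injectives (via the adjunction $\Hom_{\frp,\frh}(A,B)\simeq\Hom_{\frl,\frh}(A,B^{\fru})$ for $A$ with trivial $\fru$-action), which is precisely the verification you flagged as the ``delicate point.''
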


It will show in the next section that $H^i(\fru, M)\in\caM(\frl, \frh)$ for $M\in\caM(\frg, \frh)$ and $i\in \bbN$. So the pairing $\mathrm{EP}_{\frl, \frh}(V, H^i(\fru, M))$ is well defined.

\begin{proof}
Recall that $V$ can be viewed as a $\frp$-module with trivial $\fru$-action. First, it is evident that $U(\frg)\otimes_{U(\frp)}-$ is an exact functor on the category of $(\frp, \frh)$-modules. It sends projective objects to projective objects. Therefore, the Frobenius reciprocity yields
\begin{equation}\label{ind1}
\Ext_{\frg, \frh}^{i}(M_\frp(V), M)\simeq\Ext_{\frp, \frh}^{i}(V, M).
\end{equation}
On the other hand, consider the functor $(\cdot)^\fru$ from the category of $(\frp, \frh)$-modules to the category of $(\frl, \frh)$-modules (or the category of $(\frp, \frh)$-modules with trivial $\fru$-action). It sends injective objects to injective objects since
\begin{equation}\label{ind2}
\Hom_{\frp, \frh}(A, B)\simeq\Hom_{\frp, \frh}(A, B^\fru)\simeq\Hom_{\frl, \frh}(A, B^\fru)
\end{equation}
for any $(\frp, \frh)$-module $A$ with trivial $\fru$-action and any $(\frp, \frh)$-module $B$. Thus the covariant functor $\Hom_{\frp, \frh}(V, \cdot)$ on the category of $(\frp, \frh)$-modules is isomorphic to the composition of the functor $(\cdot)^\fru$ and the functor $\Hom_{\frl, \frh}(V, \cdot)$. Thus there exists a spectral sequence (e.g., \cite{W}, Theorem 5.8.3)
\begin{equation}\label{ind3}
E_2^{i,j}=\Ext_{\frl, \frh}^i(V, H^j(\fru, M))\Rightarrow\Ext_{\frp, \frh}^{i+j}(V, M).
\end{equation}
Then the proposition follows from (\ref{ind1}) and (\ref{ind3}).
\end{proof}

\begin{cor}\label{cind}
Let $M\in\caM(\frg, \frh)$ be a cuspidal $(\frg, \frh)$-module and $V\in\caM(\frl, \frh)$. If $\frp\neq\frg$, then
\[
\Ext_{\frg, \frh}^{i}(M_\frp(V), M)=0,
\]
for any $i\in\bbN$.
\end{cor}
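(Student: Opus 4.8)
The plan is to combine Proposition~\ref{pind} with the vanishing statement in Proposition~\ref{van}, exploiting that a cuspidal $(\frg,\frh)$-module looks cuspidal when restricted to the nilradical of a proper parabolic. By Proposition~\ref{pind},
\[
\mathrm{EP}_{\frg,\frh}(M_\frp(V),M)=\sum_{i\in\bbN}(-1)^i\mathrm{EP}_{\frl,\frh}(V,H^i(\fru,M)),
\]
and more importantly the proof of that proposition produced the stronger identity $\Ext^i_{\frg,\frh}(M_\frp(V),M)\simeq\Ext^i_{\frp,\frh}(V,M)$ together with the spectral sequence $E_2^{i,j}=\Ext^i_{\frl,\frh}(V,H^j(\fru,M))\Rightarrow\Ext^{i+j}_{\frp,\frh}(V,M)$. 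So it suffices to show $H^j(\fru,M)=0$ for all $j\in\bbN$ whenever $M$ is cuspidal and $\frp\neq\frg$; then the $E_2$-page vanishes, the abutment vanishes, and we are done.

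**First I would** observe that $\frp\neq\frg$ forces $\fru\neq0$, so there is at least one root $\alpha\in\Delta(\fru)$. Since $\alpha$ is a root of $\fru$, $-\alpha$ is a negative root relative to the parabolic, so $f_{-\alpha}$ — more precisely, the root vector in $\frg_{\alpha}\subseteq\fru$ in the notation of Proposition~\ref{van} — lies in $\fru$. Because $M$ is cuspidal, $e_\beta$ acts injectively on $M$ for every $\beta\in\Delta$; in particular the root vector along $\alpha\in\Delta(\fru)$ acts injectively. But an admissible (finite weight multiplicities) $(\frg,\frh)$-module on which a root vector acts injectively actually has that root vector acting \emph{bijectively}: injectivity together with $\dim M_\mu<\infty$ and the fact that $e_\alpha\colon M_\mu\to M_{\mu+\alpha}$ is a family of injections between spaces whose dimensions are bounded by $d(M)$ forces surjectivity onto each weight space. (This is exactly the standard dimension-count argument underlying Lemma~\ref{cusp1}.) Hence the hypotheses of Proposition~\ref{van} are met with $\fra=\fru$ and the chosen $\alpha$: $f_{-\alpha}$ acts bijectively on $M$, so $H^j(\fru,M)=0$ for all $j$.

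**Then** I would feed this into the spectral sequence: every $E_2^{i,j}=\Ext^i_{\frl,\frh}(V,H^j(\fru,M))$ vanishes, hence $\Ext^{n}_{\frp,\frh}(V,M)=0$ for all $n$, hence by~\eqref{ind1} (the Frobenius reciprocity isomorphism) $\Ext^i_{\frg,\frh}(M_\frp(V),M)=0$ for all $i\in\bbN$, which is the claim.

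**The main obstacle** I anticipate is the passage from "cuspidal" (root vectors act injectively, as in the definition in Section~3) to "$f_{-\alpha}$ acts bijectively" as required by Proposition~\ref{van} — one must check that Proposition~\ref{van} is being applied to $M$ as a module over the subalgebra $\fru$, that $\fra=\fru$ is $\frh$-invariant and that $\alpha\in\Delta(\fru)$ so that the relevant root vector indeed sits inside $\fru$, and that admissibility of $M\in\caM(\frg,\frh)$ is genuinely needed and used to upgrade injectivity to bijectivity. A cleaner alternative, avoiding any dimension count, is to note that Proposition~\ref{van} as stated already only requires bijectivity of $f_{-\alpha}$, and that for a cuspidal module one can invoke the structure of its support ($\mathrm{supp}\,M$ is a single coset $\lambda+Q$ with constant weight multiplicity, by the analogue of Lemma~\ref{cusp1} for not-necessarily-simple admissible cuspidal modules) to see directly that each $e_\alpha$ is bijective; I would use whichever of these the paper has set up, but in either case the load-bearing point is injective $\Rightarrow$ bijective for root vectors on admissible cuspidal modules.
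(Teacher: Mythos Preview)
Your proposal is correct and follows essentially the same route as the paper: the paper's proof is the terse three-line version of exactly what you wrote, namely $\frp\neq\frg\Rightarrow\fru\neq0$, then Proposition~\ref{van} gives $H^j(\fru,M)=0$, and one concludes via \eqref{ind1} and the spectral sequence \eqref{ind3}. One small sharpening: your first pass at ``injective $\Rightarrow$ bijective'' using only the bound $d(M)$ is not quite enough on its own (a monotone bounded integer sequence stabilizes eventually, not immediately); the clean argument is the one you allude to under Lemma~\ref{cusp1}, using that cuspidality makes \emph{both} $e_\alpha$ and $e_{-\alpha}$ injective, so $\dim M_\mu=\dim M_{\mu+\alpha}$ and hence each $e_\alpha$ is bijective on every weight space.
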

\begin{proof}
It is evident that $\frp\neq\frg$ implies $\fru\neq0$. In view of Proposition \ref{van}, one gets $H^j(\fru, M)=0$ for any $j\in\bbN$. Then the corollary is a consequence of (\ref{ind1}) and (\ref{ind3}).
\end{proof}

\subsection{Duality of weight modules} Let $\tau$ be a standard anti-involution of $\frg$ which interchanges $e_\alpha$ and $f_\alpha$ for $\alpha\in\Delta$ and fix $h\in\frh$ (see for example \cite{Hum}, \S0.5). For any $(\frg, \frh)$-module $M$, there exists a dual space $$M^\vee:=\bigoplus_{\mu\in\frh^*}(M_\mu)^*,$$ where $(M_\mu)^*$ can be naturally identified with the space of those $f\in M^*$ which are vanished on all weight spaces $M_\lambda$ unless $\lambda=\mu$.  The $\frg$-action on $M^\vee$ is defined by
\[
(X\cdot f)(v)=f(\tau(X)\cdot v)
\]
for $X\in\frg$ and $v\in M$.

\begin{lemma}\label{dual1}
Let $(\cdot)^\vee$ be the dual functor defined above. Then
\begin{itemize}
\item [(\rmnum{1})] The functor $(\cdot)^\vee$ is a contravariant exact functor on the category of $(\frg, \frh)$-modules.
\item [(\rmnum{2})] The functor $(\cdot)^\vee$ is invertible on the category of admissible $(\frg, \frh)$-modules. Moreover, one has $M^{\vee\vee}\simeq M$ if $M$ is an admissible $(\frg, \frh)$-module.
\item [(\rmnum{3})] If $M$ is a simple admissible $(\frg, \frh)$-module, then $M^\vee\simeq M$.

\item [(\rmnum{4})] If $M\in\mathcal{M}(\frg, \frh)$, then $I_{\frg, \frh}(M^\vee)=I_{\frg, \frh}(M)$.
\end{itemize}
\end{lemma}
\begin{proof}
The statements (\rmnum{1}) and (\rmnum{2}) are obvious. For (\rmnum{3}), by Theorem \ref{fernando}, there exist a parabolic subalgebra $\frp$ of $\frg$ and a simple cuspidal $(\frl,\frh)$-module $V$ such that $M\simeq L_\frp(V)$. Here $\frl$ which contains $\frh$ is the Levi subalgebra of $\frp$. Since $V_\mu=M_\mu$ for $\mu\in\mathrm{supp}V$, denote by $V^\vee$ the subspace $\bigoplus_{\mu\in\mathrm{supp}V}(M_\mu)^*$ of $L_\frp(V)^\vee$. Then $V^\vee$ is an $\frl$-submodule of $L_\frp(V)^\vee$. Note that for $f\in V^\vee$,
\[
(\fru f)(M)=f(\tau(\fru)M)=f(\bar\fru M)\subset f(\bigoplus_{\mu\not\in\mathrm{supp}V}M_\mu)=0.
\]
So $V^\vee$ is a $\frp$-submodule of $L_\frp(V)^\vee$ with trivial $\fru$-action. The Frobenius reciprocity implies
\[
\Hom_\frp(V^\vee, L_\frp(V)^\vee)\simeq \Hom_\frg(M_\frp(V^\vee), L_\frp(V)^\vee).
\]
The simple module $L_\frp(V)^\vee$ is a quotient of $M_\frp(V^\vee)$ and thus $L_\frp(V)^\vee\simeq L_\frp(V^\vee)$. Applying Theorem \ref{fernando} again, it suffices to prove (\rmnum{3}) for cuspidal modules of simple Lie algebra of type $A$ or $C$. This was shown in \cite{GS2} and \cite{BKLM}. Since $M^\vee$ and $M$ have the same weight spaces for $M$ admissible, (\rmnum{4}) is an immediate consequence.
\end{proof}

Except using projective and injective resolutions, the extension group $\Ext^i_{\frg, \frh}(M, N)$ can be defined by equivalent classes of long exact sequence of $(\frg, \frh)$-modules. Any exact sequence
\[
\textbf{E}: 0\ra N\ra E_i\ra\ldots\ra E_1\ra M\ra0
\]
of $(\frg, \frh)$-modules is called an \emph{$i$-extension} of $M$ by $N$. Two $i$-extensions $\textbf{E}$ and $\textbf{F}$ are equivalent if there exists a commutative diagram
\[
\xymatrix{
  \textbf{E}: 0 \ar[r]
                &  N\ar@{=}[d]\ar[r] & E_i\ar[r]\ar[d] & \ldots\ar[r] & E_1\ar[r]\ar[d] & M \ar@{=}[d] \ar[r] & 0\\
  \textbf{F}: 0 \ar[r]
                &  N\ar[r] & F_i\ar[r] & \ldots\ar[r] & F_1\ar[r] & M \ar[r] & 0.            }
\]

\begin{prop}\label{depair}
If $M, N\in\caM(\frg, \frh)$, then
\[
\Ext^i_{\frg, \frh}(M, N)\simeq\Ext^i_{\frg, \frh}(N^\vee, M^\vee).
\]
\end{prop}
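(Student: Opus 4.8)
The plan is to deduce the statement directly from the Yoneda (long-exact-sequence) description of $\Ext$ recalled just above, combined with the fact that $(\cdot)^\vee$ is a contravariant \emph{exact} functor on the category of all $(\frg,\frh)$-modules (Lemma \ref{dual1}(i)).

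First I would note that applying $(\cdot)^\vee$ to an $i$-extension
\[
\mathbf{E}\colon 0\ra N\ra E_i\ra\cdots\ra E_1\ra M\ra 0
\]
of $M$ by $N$ yields, by exactness, an $i$-extension
\[
\mathbf{E}^\vee\colon 0\ra M^\vee\ra E_1^\vee\ra\cdots\ra E_i^\vee\ra N^\vee\ra 0
\]
of $N^\vee$ by $M^\vee$. A morphism of $i$-extensions that restricts to the identity on the two end terms is carried by $(\cdot)^\vee$ to a morphism (in the opposite direction) restricting to the identity on $M^\vee$ and $N^\vee$; hence $(\cdot)^\vee$ respects the generating relation of the Yoneda equivalence, and $[\mathbf E]\mapsto[\mathbf E^\vee]$ descends to a well-defined map $D\colon\Ext^i_{\frg,\frh}(M,N)\lra\Ext^i_{\frg,\frh}(N^\vee,M^\vee)$. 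Since $(\cdot)^\vee$ is additive it commutes with Baer sums and Yoneda splicing, so $D$ is a homomorphism of $\bbC$-vector spaces. Because $(\cdot)^\vee$ is exact and contravariant while every module in $\caM(\frg,\frh)$ has finite length (Theorem \ref{fernando}) with simple factors preserved by $(\cdot)^\vee$ (Lemma \ref{dual1}(iii)), both $M^\vee$ and $N^\vee$ again lie in $\caM(\frg,\frh)$; so the same construction applied to $(N^\vee,M^\vee)$ gives a homomorphism $D'\colon\Ext^i_{\frg,\frh}(N^\vee,M^\vee)\ra\Ext^i_{\frg,\frh}(M^{\vee\vee},N^{\vee\vee})$.

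It then remains to identify $D'\circ D$ and $D\circ D'$ with the canonical isomorphisms coming from double duality. For this I would use the natural transformation $\eta\colon\id\Rightarrow(\cdot)^{\vee\vee}$ whose component $\eta_X\colon X\ra X^{\vee\vee}$ is the weight-space-wise evaluation map; since the weight spaces of an admissible module are finite-dimensional, $\eta_X$ is an isomorphism whenever $X$ is admissible (cf. Lemma \ref{dual1}(ii)), in particular for $X=M,N$. Given an $i$-extension $\mathbf E$, naturality of $\eta$ gives a morphism of $i$-extensions from the relabelled extension $\mathbf E^{\eta_M,\eta_N}$ (the ends of $\mathbf E$ renamed $M^{\vee\vee},N^{\vee\vee}$ via $\eta_M,\eta_N$) to $\mathbf E^{\vee\vee}$, with vertical maps $\id$ on the ends and $\eta_{E_j}$ in the middle; hence $[\mathbf E^{\vee\vee}]=[\mathbf E^{\eta_M,\eta_N}]$ and $D'\circ D$ is the canonical double-duality isomorphism $\Ext^i_{\frg,\frh}(M,N)\xrightarrow{\ \sim\ }\Ext^i_{\frg,\frh}(M^{\vee\vee},N^{\vee\vee})$. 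The same argument with $(N^\vee,M^\vee)$ in place of $(M,N)$ shows $D\circ D'$ is an isomorphism as well, whence $D$ is a bijection, proving the proposition.

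The one place that needs care — and what I expect to be the main (albeit minor) obstacle — is exactly this last step: the middle terms $E_j$ of an $i$-extension are \emph{arbitrary} $(\frg,\frh)$-modules rather than admissible ones, so $(\cdot)^{\vee\vee}$ is genuinely not naturally isomorphic to $\id$ on them, and $\eta_{E_j}$ need be neither an isomorphism nor invertible. The Yoneda equivalence relation is precisely flexible enough to make this harmless, since a morphism of $i$-extensions that is the identity on both ends already identifies the two classes regardless of what happens in the middle; one could alternatively avoid the issue by first checking that $\Ext^i_{\frg,\frh}$ between objects of $\caM(\frg,\frh)$ can be computed inside the finite-length category $\caM(\frg,\frh)$, on which $(\cdot)^\vee$ is an honest anti-equivalence, but the argument above does not require establishing that.
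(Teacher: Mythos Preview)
Your proposal is correct and follows essentially the same route as the paper: both arguments use the Yoneda (long-exact-sequence) description of $\Ext$, apply the exact contravariant functor $(\cdot)^\vee$ to $i$-extensions, and then invoke the natural transformation $\id\Rightarrow(\cdot)^{\vee\vee}$ to identify $[\mathbf E]$ with $[\mathbf E^{\vee\vee}]$. Your treatment is in fact slightly more careful than the paper's in two respects: you explicitly note that the middle terms $E_j$ need not be admissible (so $\eta_{E_j}$ need not be invertible) and explain why the Yoneda equivalence relation absorbs this, and you spell out why both composites $D'\circ D$ and (the analogue of) $D\circ D'$ are isomorphisms rather than just one direction.
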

\begin{proof}
Suppose that
\[
\xymatrix@C=0.5cm{
  \mathbf{E}: 0 \ar[r] & N \ar[rr]^{\theta_{i+1}} && E_i \ar[rr]^{\theta_{i}} && \ldots \ar[rr]^{\theta_2} && E_1 \ar[rr]^{\theta_{1}} && M \ar[r] & 0}
\]
represents an equivalent class $\xi\in\Ext^i_{\frg, \frh}(M, N)$. Then
\[
\xymatrix@C=0.5cm{
  \mathbf{E}^\vee: 0 \ar[r] & M^\vee \ar[rr]^{\theta_{1}^\vee} && E_1^\vee \ar[rr]^{\theta_{2}^\vee} && \ldots \ar[rr]^{\theta_i^\vee} && E_i^\vee \ar[rr]^{\theta_{i+1}^\vee} && N^\vee \ar[r] & 0}.
\]
represents an element $\xi^\vee$ in $\Ext^i_{\frg, \frh}(N^\vee, M^\vee)$. It is easy to verify that $\xi\ra\xi^\vee$ induces a linear map from $\Ext^i_{\frg, \frh}(M, N)$ to $\Ext^i_{\frg, \frh}(N^\vee, M^\vee)$. Similarly, there is linear map
\[
\Ext^i_{\frg, \frh}(N^\vee, M^\vee)\ra\Ext^i_{\frg, \frh}(M^{\vee\vee}, N^{\vee\vee})\simeq\Ext^i_{\frg, \frh}(M, N)
\]
which sends $\xi^\vee$ to $\xi^{\vee\vee}$ (the last isomorphism follows from Lemma \ref{dual1}). The $i$-extension $\xi^{\vee\vee}$ is represented by
\[
\xymatrix@C=0.5cm{
  \mathbf{E}^{\vee\vee}: 0 \ar[r] & N \ar[rr]^{\theta_{i+1}^{\vee\vee}} && E_i^{\vee\vee} \ar[rr]^{\theta_{i}^{\vee\vee}} && \ldots \ar[rr]^{\theta_2^{\vee\vee}} && E_1^{\vee\vee} \ar[rr]^{\theta_{1}^{\vee\vee}} && M \ar[r] & 0}.
\]
For any $(\frg, \frh)$-module $L$, let $\iota_L: L\ra L^{\vee\vee}$ be the linear map defined by $\iota_L(v)(\vf)=\vf(v)$ for any $v\in L$ and $\vf\in L^\vee$. It is a standard result of homological algebra that $\iota$ yields a natural transformation from the identity functor to the functor $(\cdot)^{\vee\vee}$ of the category of $(\frg, \frh)$-modules. Then $\mathbf{E}$ is equivalent to $\mathbf{E}^{\vee\vee}$. Hence $\xi^{\vee\vee}=\xi$ and $\xi\ra\xi^\vee$ is bijective.
\end{proof}

%
%%%%%%%%%%%%%%%%%%%%%%%%%%%%%%%%%%%%%%%%%%%%%%%%%%%%%%%%%%%%%%%%%%%%
%
\section{Euler-Poincar\'{e} pairing for weight modules}
%
%%%%%%%%%%%%%%%%%%%%%%%%%%%%%%%%%%%%%%%%%%%%%%%%%%%%%%%%%%%%%%%%%%%%
%

In this section, we will prove the following theorem.

\begin{theorem}\label{main2}
If $M, N\in\mathcal{M}(\frg, \frh)$, then
\begin{equation}\label{epp}
\mathrm{EP}_{\frg,\frh}(M, N)=[I_{\frg,\frh}(M), I_{\frg,\frh}(N)]_\frh.
\end{equation}
\end{theorem}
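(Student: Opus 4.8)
The plan is to reduce the identity to the case of simple modules and then, by Fernando's theorem, to the case of modules of the form $L_\frp(V)$ with $V$ a simple cuspidal $(\frl,\frh)$-module. First I would observe that both sides of \eqref{epp} are additive in each variable with respect to short exact sequences: the Euler--Poincar\'e pairing $\mathrm{EP}_{\frg,\frh}(M,N)$ is additive in $M$ and $N$ by the long exact sequence for $\Ext$, and the bracket $[I_{\frg,\frh}(M),I_{\frg,\frh}(N)]_\frh$ is additive because $I_{\frg,\frh}(-)$ is additive on the Grothendieck group (Lemma \ref{index3}) and the pairing of virtual $\frh$-modules is bilinear. Since every module in $\caM(\frg,\frh)$ has finite length (Theorem \ref{fernando}), it suffices to prove \eqref{epp} when both $M$ and $N$ are simple.

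Next I would use the parabolic presentation. Write $M\simeq L_\frp(V)$ and $N\simeq L_\frq(W)$ with $V,W$ simple cuspidal modules over the respective Levi subalgebras. The key computational input is Proposition \ref{pind}, which gives
\[
\mathrm{EP}_{\frg,\frh}(M_\frp(V),N)=\sum_{i\in\bbN}(-1)^i\mathrm{EP}_{\frl,\frh}\bigl(V,H^i(\fru,N)\bigr),
\]
together with the spin-index identity $I_{\frg,\frh}(M_\frp(V))=I_{\frl,\frh}(I_{\frg,\frl}(M_\frp(V)))$ from Lemma \ref{index4} and the $\fru$-cohomology description of $I_{\frg,\frl}$ from Lemma \ref{index3}. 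The idea is that, on the Dirac-index side, $I_{\frg,\frh}(M_\frp(V))$ is computed by the same alternating sum of $\fru$-cohomologies that appears on the $\mathrm{EP}$ side, so the two sides of \eqref{epp} for $M_\frp(V)$ reduce to the corresponding identity over $\frl$ for the cuspidal module $V$ paired against the various $H^i(\fru,N)$. Passing from $M_\frp(V)$ to its simple quotient $L_\frp(V)$ costs only terms supported on strictly smaller composition series (the radical of $M_\frp(V)$), which are handled by induction on length; and by Lemma \ref{dual1}(iii) a simple weight module is self-dual, so Proposition \ref{depair} lets me trade $\Ext_{\frg,\frh}^i(M,N)$ for $\Ext_{\frg,\frh}^i(N,M)$ and symmetrise, exchanging the roles of $M$ and $N$ when convenient. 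Thus everything is reduced to proving \eqref{epp} for a \emph{cuspidal} pair over a (product of simple) Lie algebra, where the parabolic is all of $\frg$.

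For the cuspidal base case I would bring in Mathieu's twisting functor. By Proposition \ref{proptst2}, the groups $\Ext_{\frg,\frh}^i$ between $\Gamma$-bijective modules are invariant under $\Phi_\Gamma^\nu$, so $\mathrm{EP}_{\frg,\frh}$ is a twisting invariant; and one checks that the Dirac index $I_{\frg,\frh}$, being computed from $\fru$-cohomology weight-space data, transforms under $\Phi_\Gamma^\nu$ in a controlled way (a shift of weights by $\nu$), compatibly with the bracket $[\,,\,]_\frh$. A cuspidal module is $\Gamma$-bijective for any commuting set $\Gamma$ (the $e_\alpha$, hence after conjugating the $f_\alpha$, act bijectively), so Lemma \ref{cusp3} applies: there is a twist $\Phi_\Gamma^\nu M$ that contains no cuspidal subquotient, i.e. all its composition factors are proper $L_\frp(V)$'s with $\frp\neq\frg$. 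Applying the already-established length-induction step to $\Phi_\Gamma^\nu M$ and then untwisting via Proposition \ref{proptst2} closes the induction. In parallel one uses Corollary \ref{cind}: when $M$ is cuspidal and $\frp\neq\frg$, $\Ext_{\frg,\frh}^i(M_\frp(V),M)=0$ for all $i$, which matches the vanishing of the corresponding Dirac index contribution (the relevant $\fru$-cohomologies vanish by Proposition \ref{van}), so the bookkeeping on the two sides of \eqref{epp} stays aligned.

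\textbf{Main obstacle.} The delicate point is the untwisting argument: one must verify that the twisting functor $\Phi_\Gamma^\nu$ moves the Dirac index $I_{\frg,\frh}$ exactly as it moves the Euler--Poincar\'e pairing, so that an identity proven for $\Phi_\Gamma^\nu M$ descends to $M$. Because $\nu$ is a complex parameter (the $x_i$ need not be integers), $\Phi_\Gamma^\nu M$ need not lie in $\caM(\frg,\frh)$ and need not have an infinitesimal character, so Proposition \ref{index2} is not directly available; I expect to need the weight-space / Euler-characteristic formulation of $I_{\frg,\frh}$ (Lemmas \ref{index3} and \ref{index32}) rather than the Dirac-cohomology one, and to track carefully how $H^i(\fru,-)$ behaves under the localisation-and-twist, using that $\Phi_\Gamma^\nu$ is exact on $\mathcal{C}_\Gamma$ (Lemma \ref{lemtst3}) and commutes appropriately with the Koszul complex computing $\fru$-cohomology. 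Making this compatibility precise — and checking that the non-integrality of $\nu$ causes no genuine trouble because only the final, untwisted modules need to be genuine weight modules with infinitesimal characters — is where the real work lies.
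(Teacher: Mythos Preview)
Your overall architecture matches the paper's: reduce to simples, handle the non-cuspidal case via Proposition~\ref{pind} and rank induction over the Levi, use duality (Proposition~\ref{depair}) to swap arguments, and treat the cuspidal--cuspidal case via Mathieu's twist and Proposition~\ref{proptst2}. Two points deserve correction.

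First, your ``induction on length'' for passing from $M_\frp(V)$ to $L_\frp(V)$ is not well-founded as stated. The radical of $M_\frp(V)$ has composition factors $L_\frp(W)$ with $W$ again simple cuspidal over $\frl$, and each of those requires the same reduction through its own $M_\frp(W)$. What you need is a height function on the simple $\frp$-induced modules in a fixed block $\caM^\chi_{\bar\nu}$ that strictly decreases on the non-top factors; the paper supplies exactly this as the function $a_\frp$ in Proposition~\ref{pind2}, built from the parabolic height $\htt_\frp$ of the support of $V$. Without something like $a_\frp$ your inner induction has no base.

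Second, and more importantly, the ``main obstacle'' you flag in the cuspidal case is an artefact of your approach; the paper bypasses it entirely. Rather than tracking how $I_{\frg,\frh}$ transforms under $\Phi_\Gamma^\nu$, the paper extends the commuting set $\Gamma'$ of Lemma~\ref{cusp3} to a commuting set $\Gamma$ that is a \emph{basis of $Q$}, and then invokes Lemma~\ref{ep6}: for any $\Gamma$-bijective $M\in\caM(\frg,\frh)$ with $\Gamma$ a basis of $Q$, one has $I_{\frg,\frh}(M)=0$ (the weight multiplicities are constant along $\nu+Q$, so the alternating sum over $\bigwedge\fru$ collapses). Since both $M$ and $\Phi_\Gamma^\nu M$ are $\Gamma$-bijective, their spin indices both vanish, and the identity for $(M,N)$ reduces to $0=0$ on the index side, with the EP side handled by Proposition~\ref{proptst2} and the already-proved non-cuspidal case applied to $\Phi_\Gamma^\nu M$. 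No untwisting of indices is needed. Incidentally, your worry that $\Phi_\Gamma^\nu M\notin\caM(\frg,\frh)$ is unfounded: its weight spaces are those of $M$ shifted by $\nu$, so it is admissible, and it has finite length by Mathieu's theory, hence lies in $\caM(\frg,\frh)$.
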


The pair on the right side is defined as follows. For virtual $\frh$-modules $A$, denote by $A^+$ and $A^-$ the positive and negative part of $A$, where $A^+$ and $A^-$ are actual $\frh$-modules such that $\Hom_\frh(A^+, A^-)=0$. Given virtual $\frh$-modules $A$ and $B$, define the pair
\begin{equation}
\begin{aligned}
\left[A, B\right]_\frh=&\dim\Hom_\frh(A^+, B^+)-\dim\Hom_\frh(A^+, B^-)\\
&-\dim\Hom_\frh(A^-, B^+)+\dim\Hom_\frh(A^-, B^-).
\end{aligned}
\end{equation}
 We say that $A$ is finite-dimensional if both $A^+$ and $A^-$ are finite-dimensional. The pair is of course well-defined if both $A$ and $B$ are finite-dimensional.

\subsection{The pairing of spin index} This subsection is devoted to show that the spin indices of weight modules involved here are finite dimensional. To prove it, one has to understand the structure of modules in $\mathcal{M}(\frg, \frh)$. Let $Z(\frg)$ be the center of $U(\frg)$. We say that a $(\frg, \frh)$-module $M$ is \emph{$Z(\frg)$-finite} if there exists a finite codimension ideal of $Z(\frg)$ which annihilates $M$. Denote by $Z(g)'$ the set of algebra homomorphisms $Z(\frg)\ra\bbC$. For any $Z(\frg)$-finite module $M$ and $\chi\in Z(\frg)'$, define the $\chi$-primary component of $M$ to be
\[
M^\chi:=\left\{v\in M\ |\ (z-\chi(z))^n v=0,\ \mbox{for some}\ n\in\bbN\ \mbox{and all}\ z\in Z(\frg)\right\}.
\]
We say that $M$ has generalized $Z(\frg)$-infinitesimal character $\chi$ if $M=M^\chi$. For $\nu\in\frh^*$, denote $\bar\nu:=\nu+Q\in\frh^*/Q$. Then
\[
M^\chi_{\bar\nu}:=\bigoplus_{\mu\in\bar\nu}M^\chi\cap M_\mu
\]
is a $(\frg, \frh)$-submodule of $M$. In view of Proposition 0.3.19 in \cite{V1} and Lemma 4.1 in \cite{F}, we have
\begin{equation}\label{ep0}
M=\bigoplus_{\chi\in Z(\frg)', \bar\nu\in\frh^*/Q}M^\chi_{\bar\nu}
\end{equation}
for any $Z(\frg)$-finite $(\frg, \frh)$-module $M$. In view of Theorem \ref{fernando}, every module in $\mathcal{M}(\frg, \frh)$ is of finite length and thus $Z(\frg)$-finite. For convenience, denote by $\mathcal{M}^\chi_{\bar\nu}$ the full subcategory of $\mathcal{M}(\frg, \frh)$ whose objects are modules $M$ for which $M={M}^\chi_{\bar\nu}$. It is evident that we can write
\[
\mathcal{M}(\frg, \frh)=\bigoplus_{\chi\in Z(\frg)', \bar\nu\in\frh^*/Q}\mathcal{M}^\chi_{\bar\nu}.
\]

Let $Z_\frg$ (resp. $Z_\frl$) be the center of $\frg$ (resp. $\frl$). Then $\frg=Z_\frg\oplus[\frg, \frg]$ and $$Z_\frg=\{h\in\frh\ |\ \alpha(h)=0\ \mbox{for all}\ \alpha\in\Delta\}\subset\frh\cap Z(\frg).$$
Given $\chi\in Z(\frg)'$ and $\nu\in\frh^*$, both of them can be viewed as functions on $Z_\frg$.

\begin{lemma}\label{ep1}
Let $M$ be a $Z(\frg)$-finite $(\frg, \frh)$-module. Suppose that $M^\chi_{\bar\nu}\neq0$ for some $\chi\in Z(\frg)'$ and $\bar\nu\in\frh^*/Q$.
\begin{itemize}
\item [(\rmnum{1})] We have $\chi|_{Z_\frg}=\nu|_{Z_\frg}$ as functions restricted to $Z_\frg$.

\item [(\rmnum{2})] For any $\chi^\frl\in Z(\frl)'$, there exists at most one set $\lambda+Q(\frl)$ such that $\lambda\in\bar\nu$ and $\chi^\frl|_{Z_\frl}=\lambda|_{Z_\frl}$ as functions restricted to $Z_\frl$.
\end{itemize}
\end{lemma}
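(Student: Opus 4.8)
The statement has two parts, both of which amount to tracking how the central elements $Z_\frg$ (and $Z_\frl$) act on weight spaces living in a single coset $\bar\nu$ with a single generalized $Z(\frg)$-infinitesimal character $\chi$. The key observation is that $Z_\frg \subset \frh$, so for $z \in Z_\frg$ the action of $z$ on a weight vector $v \in M_\mu$ is just multiplication by $\mu(z)$; in particular the $\frh$-action already pins down the action of $Z_\frg$ on each weight space. The plan is to compare this with the action coming from the generalized infinitesimal character $\chi$.

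\textbf{Part (i).} First I would note that since $Z_\frg \subset \frh \cap Z(\frg)$, each $z \in Z_\frg$ lies in $Z(\frg)$ and acts on $v \in M^\chi \cap M_\mu$ both as $\mu(z)$ (being a weight vector) and, since $z$ is central with generalized eigenvalue $\chi(z)$, via $(z - \chi(z))^n v = 0$. But $z$ acts semisimply on $M_\mu$ (it is a scalar there), so $(z - \chi(z))^n v = 0$ forces $\mu(z) = \chi(z)$, i.e. $\mu|_{Z_\frg} = \chi|_{Z_\frg}$. Now every $\mu$ occurring in $M^\chi_{\bar\nu}$ lies in $\bar\nu = \nu + Q$, so $\mu - \nu \in Q$, and since every root $\alpha \in \Delta$ vanishes on $Z_\frg$ by definition of $Z_\frg$, we get $\mu|_{Z_\frg} = \nu|_{Z_\frg}$. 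Combining, $\chi|_{Z_\frg} = \mu|_{Z_\frg} = \nu|_{Z_\frg}$, which is the claim. (One should pick any $\mu$ with $M^\chi \cap M_\mu \neq 0$, guaranteed by $M^\chi_{\bar\nu} \neq 0$.)

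\textbf{Part (ii).} This is the analogous statement but now comparing with $Z_\frl$. Suppose $\lambda, \lambda' \in \bar\nu$ both satisfy $\chi^\frl|_{Z_\frl} = \lambda|_{Z_\frl} = \lambda'|_{Z_\frl}$; I want $\lambda + Q(\frl) = \lambda' + Q(\frl)$. We know $\lambda - \lambda' \in Q$ since both lie in $\bar\nu = \nu + Q$. Write $\lambda - \lambda' = \sum_{\alpha} c_\alpha \alpha$ as an integer combination of roots of $\frg$. Decompose $\frh = Z_\frl \oplus \frh'$ where $\frh' = \frh \cap [\frl,\frl]$ spans the roots of $\frl$; the roots of $\frl$ vanish on $Z_\frl$ and restrict to a full rank (on $\frh'$) system, while the roots of $\frg$ not in $\Delta(\frl)$ are nonzero on $Z_\frl$ in general. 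The hypothesis $\lambda|_{Z_\frl} = \lambda'|_{Z_\frl}$ says $(\lambda - \lambda')|_{Z_\frl} = 0$. The crucial structural input is that $Q \cap \{\xi \in \frh^* : \xi|_{Z_\frl} = 0\} = Q(\frl)$ — that is, a root-lattice element of $\frg$ that vanishes on the center of $\frl$ must already lie in the root lattice of $\frl$. Granting this, $\lambda - \lambda' \in Q(\frl)$, proving uniqueness. The main obstacle will be justifying this last lattice equality $Q \cap (Z_\frl)^\perp = Q(\frl)$; I expect it follows because $Z_\frl$ is precisely the annihilator (in $\frh$) of the subsystem $\Delta(\frl)$ generated by the simple roots of $\frl$ inside a simple system compatible with $\frp$, so the restriction map $\frh^* \to Z_\frl^*$ has kernel $\Span_\bbC \Delta(\frl)$ and one checks the integral points work out — one may need to invoke that $\frl$ is a Levi, hence $\Delta(\frl)$ is generated by a subset of simple roots, so $Q(\frl)$ is saturated in $Q$. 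I would either cite this from the structure theory of parabolics or give a short direct argument using a compatible choice of simple roots.
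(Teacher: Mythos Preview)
Your proposal is correct and follows essentially the same approach as the paper. For part (i) the arguments are identical; for part (ii) the paper likewise reduces to showing that $\lambda-\lambda'\in Q$ together with $(\lambda-\lambda')|_{Z_\frl}=0$ forces $\lambda-\lambda'\in Q(\frl)$, but cites Lemma~2.7 of \cite{CF} for this lattice fact rather than arguing it directly via a compatible simple system as you sketch.
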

\begin{proof}
Let $v$ be a nonzero weight vector in $M^\chi_{\bar\nu}$, with weight $\lambda\in\nu+Q$. Then for any $h\in Z_\frg\subseteq\frh$,
\[
hv=\lambda(h)v=\nu(h)v.
\]
Since $M^\chi_{\bar\nu}$ has generalized infinitesimal character $\chi$, there exists $n\in\bbN$ such that
\[
(h-\chi(h))^nv=0=(\nu(h)-\chi(h))^nv.
\]
Thus $\nu(h)=\chi(h)$ for $h\in Z_\frg$.

If there are $\lambda, \mu\in\bar\nu$ such that $\chi^\frl|_{Z_\frl}=\lambda|_{Z_\frl}=\mu|_{Z_\frl}$, then $\lambda-\mu\in Q$ and $(\lambda-\mu)(Z_\frl)=0$. By Lemma 2.7 in \cite{CF}, this can happen only when $\lambda-\mu\in Q(\frl)$.
\end{proof}

Let $W_\frg$ be the Weyl group of the pair $(\frg, \frh)$.

\begin{lemma}\label{ep2}
The category $\mathcal{M}^\chi_{\bar\nu}$ contains only finitely many nonisomorphic simple modules for any $\chi\in Z(\frg)'$ and $\bar\nu\in\frh^*/Q$. If $M$ is an admissible $Z(\frg)$-finite $(\frg, \frh)$-module, then $M^\chi_{\bar\nu}\in\mathcal{M}^\chi_{\bar\nu}$.
\end{lemma}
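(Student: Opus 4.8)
The statement has two parts, and the plan is to prove them in the stated order, using the first to obtain the second.

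\emph{Finiteness of simple objects.} By Fernando's Theorem~\ref{fernando} every simple module in $\mathcal{M}(\frg,\frh)$ is isomorphic to $L_\frp(V)$ for some parabolic $\frp\supseteq\frh$ with Levi subalgebra $\frl$ and some simple cuspidal $(\frl,\frh)$-module $V$. Since $\frg$ has only finitely many parabolic subalgebras containing $\frh$ (each contains one of the $|W_\frg|$ Borel subalgebras containing $\frh$, and the parabolics over a fixed Borel correspond to subsets of simple roots), it suffices to bound, for a fixed $\frp$, the number of $V$ with $L_\frp(V)\in\mathcal{M}^\chi_{\bar\nu}$. Two constraints achieve this. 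First, $V$ being simple has a $Z(\frl)$-infinitesimal character $\chi^\frl$, and $Z(\frg)$ acts on $M_\frp(V)$---hence on its quotient $L_\frp(V)$---through the composite of $\chi^\frl$ with the Harish-Chandra homomorphism $Z(\frg)\hookrightarrow Z(\frl)$; because $Z(\frl)$ is finite over the image of $Z(\frg)$ (equivalently, writing $\chi=\chi_{\lambda_0}$, because $W_\frg\cdot\lambda_0$ is a union of finitely many $W_\frl$-orbits), only finitely many $\chi^\frl$ are compatible with the prescribed $\chi$. Second, $\mathrm{supp}\,L_\frp(V)$ lies in a single coset of $Q$ which must be contained in $\bar\nu$, whereas $\mathrm{supp}\,V=\lambda_V+Q(\frl)$ for some $\lambda_V$ (Lemma~\ref{cusp1}) on which $Z_\frl$ acts through $\chi^\frl|_{Z_\frl}$; so Lemma~\ref{ep1}(ii) forces $\lambda_V+Q(\frl)$ to be the unique $Q(\frl)$-coset inside $\bar\nu$ carrying that $Z_\frl$-restriction, once $\chi^\frl$ is fixed. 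Everything thus reduces to the assertion that, for fixed $\frl$, $\chi^\frl$ and $Q(\frl)$-coset, there are only finitely many isomorphism classes of simple cuspidal $(\frl,\frh)$-modules. This is the one ingredient that is not formal, and I expect it to be the crux: it follows from Mathieu's classification~\cite{Mat}---after reducing to the simple factors of $[\frl,\frl]$, which must be of type $A$ or $C$, each such module is a direct summand of the restriction to a single $Q$-coset of a semisimple irreducible coherent family with central character $\chi^\frl$, there are only finitely many such families, and the restriction of a coherent family to one coset has finite length (bounded by its degree).

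\emph{Closure under $M\mapsto M^\chi_{\bar\nu}$.} For admissible $Z(\frg)$-finite $M$, the submodule $M^\chi_{\bar\nu}$ is again admissible (its weight spaces lie inside those of $M$) and satisfies $(M^\chi_{\bar\nu})^\chi_{\bar\nu}=M^\chi_{\bar\nu}$, so by Theorem~\ref{fernando}---which identifies finite length with membership in $\mathcal{M}(\frg,\frh)$---it is enough to show $M^\chi_{\bar\nu}$ has finite length. Any simple subquotient $L$ of $M^\chi_{\bar\nu}$ is a simple, hence finitely generated, admissible $(\frg,\frh)$-module on which $Z(\frg)$ acts by the scalar character $\chi$ (a scalar annihilated by some $(z-\chi(z))^n$ must vanish) and whose support lies in $\bar\nu$; hence $L$ is one of the finitely many simple objects $L_1,\dots,L_r$ of $\mathcal{M}^\chi_{\bar\nu}$ found above. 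Fixing a weight $\mu_i\in\mathrm{supp}\,L_i$ for each $i$, I would then exclude infinite ascending and infinite descending chains of submodules of $M^\chi_{\bar\nu}$: in either case infinitely many consecutive subquotients share a common $L_i$ as a subquotient (pigeonhole on $\{L_1,\dots,L_r\}$), so each contributes at least $\dim(L_i)_{\mu_i}\geq1$ to the telescoping sum for $\dim(M^\chi_{\bar\nu})_{\mu_i}\leq\dim M_{\mu_i}<\infty$---a contradiction. So $M^\chi_{\bar\nu}$ is both Noetherian and Artinian, hence of finite length, and therefore belongs to $\mathcal{M}^\chi_{\bar\nu}$.
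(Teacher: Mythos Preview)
Your proof is correct and follows essentially the same strategy as the paper. For the first part, both you and the authors apply Fernando's theorem, then bound in turn the number of parabolics, the number of compatible $Z(\frl)$-characters $\chi^\frl$, the $Q(\frl)$-coset of $\mathrm{supp}\,V$ via Lemma~\ref{ep1}, and finally the cuspidal modules themselves via Mathieu's classification; your exposition is slightly more detailed but the architecture is identical. For the second part, the paper argues that every finitely generated submodule of $M^\chi_{\bar\nu}$ has length bounded independently of the submodule (using admissibility to cap the multiplicity of each of the finitely many simples), whereas you show directly that $M^\chi_{\bar\nu}$ is Noetherian and Artinian by a pigeonhole-and-telescoping argument on weight-space dimensions; these are two phrasings of the same underlying dimension count. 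One small point to make explicit in your version: the pigeonhole step presupposes that each nonzero consecutive subquotient has a simple subquotient, which follows because any nonzero weight vector generates a finitely generated admissible module, hence one of finite length by Theorem~\ref{fernando}.
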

\begin{proof}
By Theorem \ref{fernando}, every simple module in $\mathcal{M}^\chi_{\bar\nu}$ has the form $L_\frp(V)$ for some parabolic subalgebra $\frp$ containing $\frh$ and some simple cuspidal $(\frl, \frh)$-module $V$, where $\frl$ is the Levi subalgebra of $\frp$. First, since every parabolic subalgebra $\frp$ containing $\frh$ is determined by a subset of $\Delta$ which is finite, there are only finitely many choices of $\frp$. Second, since $L_\frp(V)$ has infinitesimal character $\chi$, there exists at most $[W_\frg:W_\frl]$ characters $\chi^\frl\in Z(\frl)'$ corresponding to $V$ (see for example the remark before Lemma 2.7 in \cite{CF}). Third, we can find $\lambda\in\frh^*$ such that $\mathrm{supp}V=\lambda+Q(\frl)$ for the simple cuspidal $(\frl, \frh)$-module $V$ by Lemma \ref{cusp1}. Since $L_\frp(V)\in\mathcal{M}^\chi_{\bar\nu}$, one has $\lambda\in\mathrm{supp}V\subseteq\bar\nu$. Applying Lemma \ref{ep1} (\rmnum{1}) to $Z(\frl)$-finite $(\frl, \frh)$-module $V$, we have $\chi^\frl|_{Z_\frl}=\lambda|_{Z_\frl}$ as functions on $Z_\frl$. By Lemma \ref{ep1} (\rmnum{2}), there exists at most one such set $\lambda+Q(\frl)$. At last, by Mathieu's classification, there can be at most finitely many choices of cuspidal $(\frl, \frh)$-module $V$ when $\chi^\frl$ and $\mathrm{supp}V$ are fixed (see \cite{Mat}, \S8, \S9, also summarized in \cite{BKLM} and \cite{GS2}). Hence the number of nonisomorphic  simple modules $L_\frp(V)$ in $\mathcal{M}^\chi_{\bar\nu}$ is finite.

Let $N$ be any finitely generated submodule of $M^\chi_{\bar\nu}$. Since $M$ is admissible, then $N\in\caM^\chi_{\bar\nu}$ and the multiplicity of every composition factor in $N$ has a fixed upper bound (depending on $M$). Since $\mathcal{M}^\chi_{\bar\nu}$ contains only finitely many nonisomorphic simple modules, the length of $N$ also has a fixed upper bound (depending on $M$). This can happen only when $M^\chi_{\bar\nu}$ is finitely generated, that is, $M^\chi_{\bar\nu}\in\mathcal{M}^\chi_{\bar\nu}$.
\end{proof}

\begin{lemma}\label{ep3}
If $M\in \mathcal{M}(\frg,\frh)$, then $H^i(\fru, M)\in\mathcal{M}(\frl,\frh)$ for any $i\in\bbN$.
\end{lemma}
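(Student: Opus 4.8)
The plan is to reduce to a single block and then combine the Casselman--Osborne lemma with the structural Lemmas~\ref{ep1} and~\ref{ep2}. By Theorem~\ref{fernando}, $M$ has finite length and is therefore $Z(\frg)$-finite, so \eqref{ep0} expresses it as a \emph{finite} direct sum $M=\bigoplus_{\chi,\bar\nu}M^\chi_{\bar\nu}$ whose summands lie in $\mathcal{M}^\chi_{\bar\nu}$ by Lemma~\ref{ep2}. Since $\fru$-cohomology commutes with finite direct sums, it suffices to treat $M\in\mathcal{M}^\chi_{\bar\nu}$; thus from now on $M$ has generalized $Z(\frg)$-infinitesimal character $\chi$ and $\mathrm{supp}M\subseteq\bar\nu$. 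Two things must then be checked: that $H^i(\fru,M)$ is admissible as an $(\frl,\frh)$-module, and that it is finitely generated over $U(\frl)$.

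For admissibility I would compute $H^i(\fru,M)$ from the Chevalley--Eilenberg complex, whose $i$-th term $\Hom(\bigwedge^i\fru,M)$ is isomorphic, as an $\frl$-module (hence in particular as an $\frh$-module), to $(\bigwedge^i\fru)^*\otimes M$. Because $\fru$ is finite dimensional and $M$ has finite dimensional weight spaces, every weight space of this term is finite dimensional; since the differentials are $\frl$-equivariant and weight-preserving, $H^i(\fru,M)$ is an $\frl$-module which is $\frh$-semisimple with finite dimensional weight spaces, hence an admissible $(\frl,\frh)$-module. As the weights of $\bigwedge^i\fru$ lie in the root lattice $Q$, we also get $\mathrm{supp}H^i(\fru,M)\subseteq\bar\nu$.

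The substantive point is finite generation. Here I would invoke the lemma of Casselman and Osborne in its purely algebraic form: for $z\in Z(\frg)$ the operator induced by $z$ on $H^i(\fru,M)$ equals the operator induced by $\gamma(z)$, where $\gamma\colon Z(\frg)\ra Z(\frl)$ is the corresponding Harish-Chandra-type homomorphism. Since $z-\chi(z)$ acts nilpotently on $M$ it acts nilpotently on the whole complex, so $\gamma(z)-\chi(z)$ acts nilpotently on $H^i(\fru,M)$; as $Z(\frl)$ is a finite module over $\gamma(Z(\frg))$, the ideal of $Z(\frl)$ generated by all $\gamma(z)-\chi(z)$ has finite codimension and a suitable power of it annihilates $H^i(\fru,M)$. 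Hence $H^i(\fru,M)$ is $Z(\frl)$-finite and only finitely many $\chi^\frl\in Z(\frl)'$ appear in it as generalized $Z(\frl)$-infinitesimal characters. All statements of Lemmas~\ref{ep1} and~\ref{ep2} hold verbatim for the reductive algebra $\frl$ (with $\frh$ unchanged), so applying Lemma~\ref{ep1} to the $(\frl,\frh)$-module $H^i(\fru,M)$ we find that each nonzero component $(H^i(\fru,M))^{\chi^\frl}_{\bar\lambda}$ (with $\bar\lambda\in\frh^*/Q(\frl)$) satisfies $\chi^\frl|_{Z_\frl}=\lambda|_{Z_\frl}$, and that for each $\chi^\frl$ there is at most one such coset $\lambda+Q(\frl)$ inside the fixed coset $\bar\nu$. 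Therefore $H^i(\fru,M)$ is a finite direct sum of such components, and each one, being admissible and $Z(\frl)$-finite, lies in the subcategory $\mathcal{M}^{\chi^\frl}_{\bar\lambda}$ of $\mathcal{M}(\frl,\frh)$ (defined as in Lemma~\ref{ep2} with $\frl$ in place of $\frg$), hence is finitely generated over $U(\frl)$. A finite direct sum of finitely generated modules is finitely generated, so $H^i(\fru,M)\in\mathcal{M}(\frl,\frh)$.

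The step I expect to be most delicate is precisely the transfer of $Z(\frg)$-finiteness of $M$ to $Z(\frl)$-finiteness of $H^i(\fru,M)$: one must use the generalized (rather than strict) infinitesimal character version of Casselman--Osborne, since objects of $\mathcal{M}^\chi_{\bar\nu}$ need not admit a strict central character, and one must use that $Z(\frl)$ is finite over the image of $\gamma$. As an alternative to the block reduction one could run a dévissage to the case $M$ simple, using that $\mathcal{M}(\frl,\frh)$ is closed under subquotients and extensions together with the long exact sequence of $\fru$-cohomology; but the decomposition \eqref{ep0} makes this detour unnecessary.
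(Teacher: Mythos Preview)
Your proof is correct and follows essentially the same route as the paper: admissibility is immediate from the Chevalley--Eilenberg complex, $Z(\frl)$-finiteness of $H^i(\fru,M)$ comes from Casselman--Osborne, and then Lemmas~\ref{ep1} and~\ref{ep2} (applied with $\frl$ in place of $\frg$) pin down finitely many blocks, each finitely generated. The only cosmetic difference is the initial reduction: the paper passes directly to \emph{simple} $M$ via the finite-length statement in Theorem~\ref{fernando}, so that $M$ has a strict infinitesimal character and the classical Casselman--Osborne applies verbatim (the paper then cites \cite{CO} and \cite[Cor.~3.1.6]{V1}), whereas you stop at a block $\mathcal{M}^\chi_{\bar\nu}$ and therefore need the generalized-infinitesimal-character version together with the finiteness of $Z(\frl)$ over $\gamma(Z(\frg))$. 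You in fact anticipate the paper's choice in your final paragraph as the ``d\'evissage'' alternative; taking it would let you drop the extra justification you flagged as delicate.
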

\begin{proof}
For $M\in\mathcal{M}(\frg,\frh)$, it is evident that $H^i(\fru, M)$ is an admissible $(\frl, \frh)$-module. It is enough to prove that $H^i(\fru, M)$ is of finite length as an $(\frl, \frh)$-module. Since $M$ is of finite length, we only need to consider the case when $M$ is simple, that is, $M$ has infinitesimal character and $\mathrm{supp}M\subseteq\nu+Q$ for some $\nu\in\frh^*$. The Casselman-Osborne Theorem \cite{CO} implies that $H^i(\fru, M)$ is $Z(\frl)$-finite. With (\ref{ep0}) in hand, one obtains
\[
H^i(\fru, M)=\bigoplus_{\chi\in Z(\frl)', \bar\lambda\in\frh^*/Q(\frl)}H^i(\fru, M)_{\bar\lambda}^{\chi}.
\]
In view of Corollary 3.1.6 in \cite{V1}, there exist at most finitely many different generalized $Z(\frl)$-infinitesimal character $\chi$ occurring in $H^i(\fru, M)$. Thus it suffices to show that
\[
H^i(\fru, M)^{\chi}=\bigoplus_{\bar\lambda\in\frh^*/Q(\frl)}H^i(\fru, M)_{\bar\lambda}^{\chi}
\]
is contained in $\mathcal{M}(\frl,\frh)$ when $\chi\in Z(\frl)'$ is fixed. Indeed, if $H^i(\fru, M)_{\bar\lambda}^{\chi}\neq0$, then one has $\lambda\in\mathrm{supp}H^i(\fru, M)+Q(\frl)\subseteq\nu+Q$. Moreover $\chi|_{Z_\frl}=\lambda|_{Z_\frl}$ as functions on $Z_\frl$ by Lemma \ref{ep1}(\rmnum{1}). It follows from Lemma \ref{ep1}(\rmnum{2}) that the set $\bar\lambda=\lambda+Q(\frl)$ is unique. Therefore
\[
H^i(\fru, M)^{\chi}=H^i(\fru, M)_{\bar\lambda}^{\chi}
\]
and is contained in $\mathcal{M}(\frl,\frh)$ by Lemma \ref{ep2} since $H^i(\fru, M)$ is admissible and $Z(\frl)$-finite.
\end{proof}

\begin{lemma}\label{ep4}
Let $M\in\mathcal{M}(\frg,\frh)$. The virtual module $I_{\frg,\frh}(M)$ is finite-dimensional. In general, the virtual module $I_{\frl,\frh}(H^i(\fru, M))$ is finite-dimensional for any $i\in\bbN$.
\end{lemma}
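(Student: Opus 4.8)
The plan is to reduce everything to the finiteness statements already in hand, namely that modules in $\mathcal{M}(\frg,\frh)$ are of finite length (Theorem \ref{fernando}), that $H^i(\fru, M)\in\mathcal{M}(\frl,\frh)$ for $M\in\mathcal{M}(\frg,\frh)$ (Lemma \ref{ep3}), and the various descriptions of the spin index as an Euler characteristic of $\fru$-cohomology (Lemmas \ref{index3}, \ref{index32}, \ref{index4}). First I would observe that both assertions are additive in short exact sequences: the functor $M\mapsto I_{\frg,\frh}(M)$ is additive on the Grothendieck group by its very definition (\ref{index1}), and $I_{\frl,\frh}(H^i(\fru,-))$ is additive because $H^*(\fru,-)$ gives a long exact sequence, so the alternating sum in $i$ of the $H^i(\fru,M)$ is additive on the Grothendieck group of $\mathcal{M}(\frg,\frh)$, and $I_{\frl,\frh}$ is additive. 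Since every module in $\mathcal{M}(\frg,\frh)$ is of finite length, it therefore suffices to prove both statements when $M$ is simple.

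So assume $M\in\mathcal{M}(\frg,\frh)$ is simple; then $M$ has an infinitesimal character and, by Theorem \ref{fernando}, $M\simeq L_\frp(V)$ for a parabolic $\frp$ with Levi $\frl'$ and a simple cuspidal $(\frl',\frh)$-module $V$. The key point is to show the relevant $\frh$-modules are, up to sign, genuine admissible $(\frl,\frh)$- or $(\frh,\frh)$-modules of finite length whose total dimension is finite. For the second assertion, by Lemma \ref{ep3} each $H^i(\fru,M)$ lies in $\mathcal{M}(\frl,\frh)$, hence is of finite length and admissible; I then apply the analogue of the argument for the first assertion with $\frg$ replaced by $\frl$ and with the module $H^i(\fru,M)$, so it is enough to settle the first assertion in full generality. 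For the first assertion, use Lemma \ref{index3}: $I_{\frg,\frh}(M)=\sum_{i\in\bbN}(-1)^i H^i(\bar\fru,M)\otimes\bbC_{\rho(\bar\fru)}$ as virtual $\frh$-modules, where now $\bar\fru\oplus\frl\oplus\fru=\frg$ comes from the \emph{fixed} parabolic in the definition of $D(\frg,\frh)$, i.e.\ $\frl=\frh$ and $\bar\fru=\frn$ is the nilradical of a Borel. Then $I_{\frg,\frh}(M)=\sum_i(-1)^iH^i(\frn,M)\otimes\bbC_{\rho}$, and it remains to bound $\sum_i\dim H^i(\frn,M)$.

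The main obstacle is precisely this finiteness of $\sum_i\dim H^i(\frn,M)$ for a simple $M=L_\frp(V)\in\mathcal{M}(\frg,\frh)$. I would handle it as follows. Since $M$ has an infinitesimal character $\chi$, the Casselman--Osborne theorem forces each $H^i(\frn,M)$ to be $Z(\frh)$-finite with generalized infinitesimal characters (as $\frh$-weights, after the $\rho$-twist) lying in the finite set $W_\frg\cdot\lambda$, where $\chi=\chi_\lambda$; in particular only finitely many $\frh$-weights, each with some multiplicity, can occur across all $i$. To bound those multiplicities I would note that $H^i(\frn,M)$ is a subquotient of $\Hom(\bigwedge^i\frn, M)\simeq M\otimes\bigwedge^i\frn^*$ via (\ref{coid1}), and that the Chevalley--Eilenberg complex computing $H^*(\frn,M)$ is a complex of admissible $(\frh,\frh)$-modules: each term $M\otimes\bigwedge^i\frn^*$ has finite-dimensional weight spaces because $M$ is admissible as a weight module and $\bigwedge^i\frn^*$ is finite-dimensional. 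Combined with the constraint that only finitely many weights survive in cohomology (from Casselman--Osborne, exactly as in the proof of Lemma \ref{ep3}), this shows $\bigoplus_i H^i(\frn,M)$ has finitely many nonzero weight spaces, each finite-dimensional, hence is finite-dimensional; therefore $I_{\frg,\frh}(M)$, being the corresponding alternating sum, is a finite-dimensional virtual $\frh$-module. Running the same argument inside $\frl$ with the admissible, finite-length, $Z(\frl)$-finite module $H^i(\fru,M)$ in place of $M$ gives that $I_{\frl,\frh}(H^i(\fru,M))$ is finite-dimensional as well, completing the proof.
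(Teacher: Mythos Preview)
Your proof is correct but takes a genuinely different route from the paper's.

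The paper reduces to simple $M$, then invokes Proposition~\ref{index2} to write $I_{\frg,\frh}(M)=H^+_{D(\frg,\frh)}(M)-H^-_{D(\frg,\frh)}(M)$, and finishes by appealing to Theorem~\ref{main1} (Dirac cohomology vanishes unless $M$ is a highest weight module) together with the explicit computation of Dirac cohomology for simple highest weight modules from \cite{HX}. The second assertion is then derived from the first via Lemma~\ref{ep3}, exactly as you do. By contrast, you bypass the Dirac cohomology entirely: you express $I_{\frg,\frh}(M)$ as the Euler characteristic of $H^*(\frn,M)$ and control that directly using Casselman--Osborne (finitely many weights) plus admissibility of $M$ (finite multiplicities). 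This is essentially the $\frl=\frh$ instance of the argument behind Lemma~\ref{ep3}, and it is more self-contained: it uses neither the vanishing theorem~\ref{main1} nor the external reference \cite{HX}. The paper's approach, on the other hand, ties this lemma into the narrative of the paper and illustrates how finiteness of the spin index is a direct corollary of the Dirac cohomology computations.

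One small wrinkle: your opening remark that ``$I_{\frl,\frh}(H^i(\fru,-))$ is additive'' via the long exact sequence only gives additivity of the alternating sum over $i$, not of each individual term, so it does not by itself reduce the second assertion to simple $M$. This is harmless, since a few lines later you correctly deduce the second assertion from Lemma~\ref{ep3} and the first assertion applied to $\frl$, without needing that reduction.
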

\begin{proof}
It suffices to show this for simple modules in $\mathcal{M}(\frg,\frh)$. It follows from Proposition \ref{index2} that
\[
I_{\frg,\frh}(M)=H^+_{D(\frg,\frh)}(M)-H^-_{D(\frg,\frh)}(M)
\]
for simple module $M\in\mathcal{M}(\frg,\frh)$. So the first assertion is a consequence of Theorem 6.16 in \cite{HX} and Theorem \ref{main1}, while the general result follows from Lemma \ref{ep3}.
\end{proof}

\subsection{Parabolic induced modules} In this subsection we prove several results about parabolic induced modules which will be used in the proof of Theorem \ref{main2}. Let $\frp\supset\frl\supset\frh$ be a parabolic subalgebra with Levisubalgebra $\frl$ containing $\frh$. Suppose that $\frb=\frh\oplus\frn$ is a Borel subalgebra contained in $\frp$ and $B$ is the unique simple system associated with the nilradical $\frn$. Then $\Delta(\frl)$ is generated by a subset $I\subset B$. Define the \emph{parabolic height} (or $I$-height) of $\nu\in\frh^*$ as
\[
\htt_\frp\nu=\sum_{\alpha\in B\backslash I}k_\alpha,
\]
where $\nu=\sum_{\alpha\in B}k_\alpha\alpha$. It is not difficult to see that $\htt_\frp$ does not depend on $\frn$. We say a simple weight module $M\in\caM(\frg, \frh)$ is \emph{$\frp$-induced} if $M\simeq L_\frp(V)$ for some simple cuspidal $(\frl, \frh)$-module $V$.

\begin{prop}\label{pind2}
Let $\frp$ be a parabolic subalgebra of $\frg$ with corresponding Levi subalgebra $\frl$ containing $\frh$. Let $V$ be a simple cuspidal $(\frl, \frh)$-module.
\begin{itemize}
\item [(\rmnum{1})] There exists $\chi\in Z(\frg)'$ and $\bar\nu\in\frh^*/Q$ such that $M_\frp(V)\in\mathcal{M}^\chi_{\bar\nu}$. Furthermore $M_\frp(V)$ has a Jordan-H\"{o}lder series with any irreducible subquotient isomorphic to $L_\frp(W)\in\mathcal{M}^\chi_{\bar\nu}$ for some simple cuspidal $(\frl, \frh)$-module $W$.

\item [(\rmnum{2})] There exists an $\bbN$-valued function $a_\frp$ on simple $\frp$-induced modules in $\mathcal{M}^\chi_{\bar\nu}$ such that $a_\frp(L_\frp(W))<a_\frp(L_\frp(V))$ if $L_\frp(W)$ is a subquotient of $M_\frp(V)$ and $W\not\simeq V$. In particular, if $a_\frp(L_\frp(V))=0$, then $L_\frp(V)=M_\frp(V)$.
\end{itemize}
\end{prop}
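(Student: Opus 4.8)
The plan is to prove (\rmnum{2}) on the back of (\rmnum{1}), so I will establish (\rmnum{1}) first. Since $V$ is a simple object of $\caM(\frl,\frh)$ it has an infinitesimal character, so $Z(\frg)$ acts by a scalar on $V$ viewed as a $\frp$-module with $\fru V=0$ (it acts through its image under the projection $U(\frg)\to U(\frl)$ along the PBW decomposition, which lies in $Z(\frl)$), and hence $Z(\frg)$ acts on $M_\frp(V)=U(\frg)\otimes_{U(\frp)}V$ by a single character $\chi$. By PBW, $M_\frp(V)\cong U(\bar\fru)\otimes V$ as $\frh$-modules, so every weight of $M_\frp(V)$ has the form $\mu-\sum_{\alpha\in\Delta(\fru)}n_\alpha\alpha$ with $\mu\in\mathrm{supp}\,V$ and $n_\alpha\in\bbN$; in particular $\mathrm{supp}\,M_\frp(V)\subseteq\lambda+Q$ for any fixed $\lambda\in\mathrm{supp}\,V$, and putting $\bar\nu:=\lambda+Q$ and recalling $M_\frp(V)\in\caM(\frg,\frh)$ we get $M_\frp(V)=M_\frp(V)^\chi_{\bar\nu}\in\caM^\chi_{\bar\nu}$, which by Theorem~\ref{fernando} has finite length. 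Any simple subquotient $S$ then lies in $\caM^\chi_{\bar\nu}$ by the block decomposition, and by Theorem~\ref{fernando} $S\simeq L_\frq(W)$ for some parabolic $\frq\supseteq\frh$ with Levi $\frl_\frq$ and some simple cuspidal $(\frl_\frq,\frh)$-module $W$; what remains is to see that $\frq=\frp$ (equivalently $\frl_\frq=\frl$ and $\fru_\frq=\fru$).

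Identifying the parabolic of each composition factor with $\frp$ is the heart of (\rmnum{1}) and the step I expect to be the main obstacle. One inclusion is easy from supports: for each simple root $\beta\in B\setminus I$ the coefficient functional $\mu\mapsto k_\beta$ (where $\mu=\sum_{\gamma\in B}k_\gamma\gamma$, read within the coset $\bar\nu$) is bounded above on $\mathrm{supp}\,M_\frp(V)$, since it vanishes on $Q(\frl)$ and is nonnegative on every root of $\fru$; restricting to $\mathrm{supp}\,W\subseteq\mathrm{supp}\,S\subseteq\mathrm{supp}\,M_\frp(V)$, which is a full coset of $Q(\frl_\frq)$ by Lemma~\ref{cusp1}, forces this functional to vanish on $Q(\frl_\frq)$ for every $\beta\in B\setminus I$, whence $\Delta(\frl_\frq)\subseteq\Delta(\frl)$. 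To upgrade this to equality of Levis and of nilradicals I would show that $\fru$ acts locally nilpotently and each $\frl$-root vector acts injectively on every subquotient of $M_\frp(V)$; the corresponding statements for $M_\frp(V)$ itself are easy (local nilpotency of $\fru$ follows from the weight bound above, and injectivity of $e_\alpha,f_\alpha$ for $\alpha\in\Delta(\frl)$ follows, via the associated graded of the PBW filtration, from the cuspidality of $V$ together with the nilpotence of $\ad e_\alpha$ on $\bar\fru$), but passing to a general subquotient is exactly where I expect to need the structure theory of parabolically induced weight modules, as in \cite{F} (the case of the simple top $L_\frp(V)$ being \cite{CF}, Corollary~2.4). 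Granting this, all composition factors of $M_\frp(V)$ are of the form $L_\frp(W)$ with $W$ simple cuspidal over $\frl$, and they lie in $\caM^\chi_{\bar\nu}$.

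For (\rmnum{2}) the idea is to order the simple $\frp$-induced modules in $\caM^\chi_{\bar\nu}$ by the parabolic height of their support. For such a module $L_\frp(W)$, $\mathrm{supp}\,W$ is a single coset of $Q(\frl)$ by Lemma~\ref{cusp1}, and since $\htt_\frp$ vanishes on $Q(\frl)$ it is constant there with value $h(L_\frp(W))\in\htt_\frp(\nu)+\bbZ$; by Lemma~\ref{ep2} there are only finitely many such modules, so the values $h$ assumes lie in one coset of $\bbZ$ and have a minimum $h_0$, and I set $a_\frp(L_\frp(W)):=h(L_\frp(W))-h_0\in\bbN$. To get the inequality, observe that under $M_\frp(V)\cong U(\bar\fru)\otimes V$ the weights of maximal $\htt_\frp$-value are precisely those of the top layer $1\otimes V$ (any other PBW summand has strictly smaller $\htt_\frp$), that $1\otimes V\simeq V$ is an $\frl$-submodule, and that it generates $M_\frp(V)$ over $\frg$; hence $1\otimes V$, being $\frl$-simple, meets every proper submodule trivially, so the maximal submodule $N$ with $M_\frp(V)/N\simeq L_\frp(V)$ contains no weight of maximal $\htt_\frp$-value. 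If $L_\frp(W)$ is a composition factor of $M_\frp(V)$ with $W\not\simeq V$ then it occurs inside $N$, so $\mathrm{supp}\,W\subseteq\mathrm{supp}\,N$ avoids the top layer and $h(L_\frp(W))<h(L_\frp(V))$, i.e. $a_\frp(L_\frp(W))<a_\frp(L_\frp(V))$. Finally, if $a_\frp(L_\frp(V))=0$ then $h(L_\frp(V))=h_0$ is minimal, so by the previous sentence $M_\frp(V)$ can have no composition factor other than $L_\frp(V)$ (it would have negative $a_\frp$), and therefore $M_\frp(V)=L_\frp(V)$.
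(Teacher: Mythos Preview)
Your proposal is correct and follows essentially the same route as the paper. For (\rmnum{1}) the paper simply cites \cite{CF} (Corollary~1.11 and Theorem~2.8) where you sketch the argument and then invoke the same structure theory; for (\rmnum{2}) both you and the paper define $a_\frp$ via the parabolic height $\htt_\frp$, normalize to land in $\bbN$ using the finiteness from Lemma~\ref{ep2}, and deduce the strict inequality from the fact that the top layer $1\otimes V$ is the unique $Q(\frl)$-coset of maximal height in $\mathrm{supp}\,M_\frp(V)$---your phrasing via the maximal submodule missing $1\otimes V$ and the paper's direct ``$\mathrm{supp}\,W\subseteq\mathrm{supp}\,V\Rightarrow W=V$'' are two sides of the same observation.
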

\begin{proof}
(\rmnum{1}) See for example Corollary 1.11 and Theorem 2.8 in \cite{CF}.

(\rmnum{2}) Let $L_\frp(V)$ be a simple module in $\mathcal{M}^\chi_{\bar\nu}$. For all $\lambda\in\mathrm{supp}V$, $\htt_\frp\lambda$ has a fixed value since $\htt_\frp(Q(\frl))=0$. We see that $\lambda\in\mathrm{supp}V\subset\nu+Q$ implies $\htt_\frp(\lambda-\nu)\in\bbZ$. Denote $a'_\frp(L_\frp(V)):=\htt_\frp(\lambda-\nu)$. In view of Lemma \ref{ep2}, $\mathcal{M}^\chi_{\bar\nu}$ contains only finitely many nonisomorphic simple modules. We can define
\[
a_\frp(L_\frp(V))=a'_\frp(L_\frp(V))-\min\{a'_\frp(L_\frp(W))\ |\ L_\frp(W)\in\mathcal{M}^\chi_{\bar\nu}\ \}.
\]
Then $a_\frp(L_\frp(V))\in\bbN$. For simplicity, denote
\[
Q^-(\fru):=\{\sum_{\alpha\in\Delta(\bar\fru)}k_\alpha\alpha\ |\ k_\alpha\in\bbN\}.
\]
Since $M_\frp(V)\simeq U(\bar\fru)\otimes V$ as $\frh$-modules, then $\mathrm{supp}M_\frp(V)=\mathrm{supp} V+Q^-(\fru)$. If $L_\frp(W)$ is a subquotient of $M_\frp(V)$, one has $\mathrm{supp} W\subseteq \mathrm{supp} V+Q^-(\fru)$, that is, there exists $\gamma\in Q^-(\fru)$ such that $a_\frp(L_\frp(W))=a_\frp(L_\frp(V))+\htt_\frp\gamma\leq a_\frp(L_\frp(V))$. If the equality holds, then $\gamma=0$ and $\mathrm{supp} W\subseteq \mathrm{supp} V$. Moreover, $W$ can be viewed as an $(\frl, \frh)$-submodule of $M_\frp(V)\simeq V\oplus N$, where
\[
N:=\bigoplus_{\gamma\not\in\mathrm{supp}V}M_\frp(V)_\gamma
\]
is an $(\frl, \frh)$-submodule of $M_\frp(V)$. We must have $W\subset V$ and thus $W=V$ as simple $(\frl, \frh)$-modules. If $a_\frp(L_\frp(V))=0$, then $M_\frp(V)$ has no subquotient other than $L_\frp(V)$ by the first statement. Hence $M_\frp(V)=L_\frp(V)$.
\end{proof}

\begin{lemma}\label{ep5}
For $V\in\caM(\frl, \frh)$, we have
\[
I_{\frg, \frl}(M_\frp(V))=V\otimes\bbC_{\rho(\fru)}.
\]
\end{lemma}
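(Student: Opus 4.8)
The statement asserts that the $(\frg,\frl)$-spin index of the parabolically induced module $M_\frp(V) = U(\frg)\otimes_{U(\frp)}V$ is simply $V\otimes\bbC_{\rho(\fru)}$, with no higher homological contributions surviving in the alternating sum. By Lemma \ref{index32}, we have $I_{\frg,\frl}(M_\frp(V)) = \sum_{i\in\bbN}(-1)^i H_i(\bar\fru, M_\frp(V))\otimes\bbC_{\rho(\fru)}$ as virtual $\frh$-modules (equivalently $\frl$-modules, after checking the $\frl$-structure). So the plan is to compute the $\bar\fru$-homology of $M_\frp(V)$ and show it is concentrated in degree zero, equal to $V\otimes\bbC_{\rho(\fru)}$ — wait, more precisely, that $\sum_i(-1)^iH_i(\bar\fru, M_\frp(V)) = V$ as virtual $\frl$-modules. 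The cleanest route is to use the Poincar\'e--Birkhoff--Witt isomorphism $M_\frp(V)\simeq U(\bar\fru)\otimes V$ as $\bar\fru$-modules (with $\bar\fru$ acting by left multiplication on the first factor): this exhibits $M_\frp(V)$ as a \emph{free} $U(\bar\fru)$-module, hence $H_i(\bar\fru, M_\frp(V)) = 0$ for all $i>0$ and $H_0(\bar\fru, M_\frp(V)) = M_\frp(V)/\bar\fru\, M_\frp(V)\simeq V$.

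The key steps, in order, are as follows. First I would invoke the standard fact that $\bar\fru$-homology can be computed via the Chevalley--Eilenberg (Koszul) complex $\bigwedge^\bullet\bar\fru\otimes M_\frp(V)$, and that for a module of the form $U(\bar\fru)\otimes V$ this complex is exact in positive degrees because $U(\bar\fru)$ is free over itself; equivalently, one tensors the (exact) Koszul resolution of $\bbC$ by $U(\bar\fru)$ with $V$. Second, I would identify $H_0(\bar\fru, M_\frp(V))$: the coinvariants $M_\frp(V)/\bar\fru\,M_\frp(V)$ are naturally isomorphic to $V$ as an $\frl$-module, since $\bar\fru\,(U(\bar\fru)\otimes V)$ consists of all tensors whose $U(\bar\fru)$-factor has no constant term, and $\frl$ acts on $V = \bbC\otimes V$ as the original Levi action. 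Third, I would feed this into Lemma \ref{index32} to conclude $I_{\frg,\frl}(M_\frp(V)) = H_0(\bar\fru, M_\frp(V))\otimes\bbC_{\rho(\fru)} = V\otimes\bbC_{\rho(\fru)}$. A parallel computation using $\fru$-cohomology and Lemma \ref{index3}/\ref{index32} would also work, but the homology route via the free-module observation is the most direct.

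The one point requiring genuine care — and the main (minor) obstacle — is the bookkeeping of the $\frl$-module structure and the $\rho$-shift. The identification $\Hom(\bigwedge^p\bar\fru, M)\simeq M\otimes\bigwedge^p\fru$ and the appearance of $\bbC_{\rho(\bar\fru)}$ versus $\bbC_{\rho(\fru)}$ in Lemmas \ref{index3} and \ref{index32} must be tracked correctly; using Lemma \ref{index32} (which carries $\bbC_{\rho(\fru)}$ and involves $H_i(\bar\fru, M)$) matches the desired right-hand side directly, so I would use that form rather than converting. One should also note that $M_\frp(V)$ is indeed admissible as a $(\frg,\frl)$-module when $V\in\caM(\frl,\frh)$, so that $I_{\frg,\frl}(M_\frp(V))$ is well-defined as a virtual $\frl$-module; this is immediate from $M_\frp(V)\simeq U(\bar\fru)\otimes V$ and local finiteness of the $\frl$-action. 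Beyond these routine verifications, there is no real difficulty: the heart of the argument is just the freeness of $U(\bar\fru)\otimes V$ over $U(\bar\fru)$, which collapses the entire homology to degree zero.
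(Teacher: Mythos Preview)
Your proposal is correct and follows essentially the same argument as the paper: observe via PBW that $M_\frp(V)\simeq U(\bar\fru)\otimes V$ is free over $U(\bar\fru)$, so $H_i(\bar\fru, M_\frp(V))$ vanishes for $i>0$ and equals $V$ for $i=0$, then apply Lemma~\ref{index32}. The paper's proof is terser but identical in substance.
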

\begin{proof}
Since $M_\frp(V)\simeq U(\bar\fru)\otimes V$ is a free $U(\bar\fru)$-module, one obtains $H_0(\bar\fru, M_\frp(V))=V$ and $H_i(\bar\fru, M_\frp(V))=0$ for $i\neq0$. Then the statement follows from Lemma \ref{index32}.
\end{proof}

\begin{lemma}\label{ep6}
Let $\Gamma$ be a commuting set of $\Delta$ and $M$ be a $\Gamma$-bijective module in $\mathcal{M}(\frg, \frh)$. If $\Gamma$ is a basis of $Q$, then
\[
I_{\frg, \frl}(M)=0.
\]
\end{lemma}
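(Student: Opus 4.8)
The plan is to reduce to a computation of formal $\frh$-characters and to show that the character of $I_{\frg,\frl}(M)$ vanishes identically; since each weight space of $M\otimes S^{\pm}_{\frg,\frl}$ is finite-dimensional ($M$ is admissible and $\dim\fru<\infty$), this forces $I_{\frg,\frl}(M)=0$. First I would combine Lemma \ref{index3} with the Euler characteristic of the Koszul-type complex $M\otimes\bigwedge^{\bullet}\fru$ computing $H^{\bullet}(\bar\fru,M)$ — legitimate here because that complex is finite in each fixed $\frh$-weight, by admissibility of $M$ — to obtain $I_{\frg,\frl}(M)=\big(\sum_{p\in\bbN}(-1)^{p}\,M\otimes\bigwedge^{p}\fru\big)\otimes\bbC_{\rho(\bar\fru)}$ as virtual $\frh$-modules. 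Taking characters and using $\rho(\bar\fru)=-\rho(\fru)$, this reads
\[
\ch\big(I_{\frg,\frl}(M)\big)=\ch(M)\cdot\prod_{\alpha\in\Delta(\fru)}\big(e^{-\alpha/2}-e^{\alpha/2}\big).
\]

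Next I would extract what $\Gamma$-bijectivity gives. For every $\gamma\in\Gamma$ the operator $f_\gamma$ restricts to an isomorphism $M_\mu\xra{\sim}M_{\mu-\gamma}$ for all $\mu\in\frh^*$, so $\dim M_\mu=\dim M_{\mu-\gamma}$; iterating over the elements of $\Gamma$ and using that $\Gamma$ is a $\bbZ$-basis of $Q$, one gets $\dim M_\mu=\dim M_{\mu+q}$ for all $\mu\in\frh^*$ and all $q\in Q$, i.e. $\ch(M)$ is invariant under translation by $Q$. Now fix any $\beta\in\Delta(\fru)$ (here $\Delta(\fru)\neq\emptyset$, as $\fru\neq0$): the coefficient of $e^{\lambda}$ in $\ch(M)\cdot(e^{-\beta/2}-e^{\beta/2})$ equals $\dim M_{\lambda+\beta/2}-\dim M_{\lambda-\beta/2}$, which is $0$ because $(\lambda+\beta/2)-(\lambda-\beta/2)=\beta\in Q$. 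Hence $\ch(M)\cdot(e^{-\beta/2}-e^{\beta/2})=0$, and since this is one of the factors of $\prod_{\alpha\in\Delta(\fru)}(e^{-\alpha/2}-e^{\alpha/2})$, the entire character vanishes and $I_{\frg,\frl}(M)=0$.

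This argument is essentially formal; its conceptual content is simply that $\Gamma$-bijectivity together with $\Gamma$ spanning $Q$ makes $\ch(M)$ $Q$-periodic, hence annihilated by $e^{-\beta/2}-e^{\beta/2}$ for any root $\beta$. The only points needing care are bookkeeping: justifying the Euler-characteristic identity on each weight space (this is exactly where admissibility is used, and is the one step that is not purely formal), and noting that the statement presupposes $\fru\neq0$, without which the product of factors is empty and the conclusion fails.
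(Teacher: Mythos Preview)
Your proof is correct and takes essentially the same approach as the paper: both use that $\Gamma$-bijectivity with $\Gamma$ a $\bbZ$-basis of $Q$ makes $\dim M_\mu$ constant on $Q$-cosets, and then verify $\dim(M\otimes S^+_{\frg,\frl})_\lambda=\dim(M\otimes S^-_{\frg,\frl})_\lambda$ for every $\lambda$. The paper first reduces to a single block $M^\chi_{\bar\nu}$ and invokes the binomial identity $(1-1)^{\dim\fru}=0$, whereas you keep the factored character and kill a single factor $e^{-\beta/2}-e^{\beta/2}$; note also that your detour through Lemma~\ref{index3} is unnecessary, since the identity $I_{\frg,\frl}(M)=\big(\sum_p(-1)^p\,M\otimes\bigwedge^p\fru\big)\otimes\bbC_{\rho(\bar\fru)}$ is immediate from the definition~(\ref{index1}) together with $S_{\frg,\frl}\simeq\bigwedge\fru\otimes\bbC_{\rho(\bar\fru)}$.
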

\begin{proof}
In view of (\ref{ep0}),  we only need to consider the case when $M=M_{\bar\nu}^\chi$ for some $\chi\in Z(\frg)'$ and $\nu\in\frh^*$. Then $\mathrm{supp}M\subseteq\bar\nu= \nu+Q$. Since $\Gamma$ is a basis of $Q$ and $M$ is $\Gamma$-bijective, we get $\mathrm{supp} M=\nu+Q$ and $\dim M_\mu=\dim M_\nu$ for any $\mu\in\nu+Q$. Denote $n:=\dim M_\nu$. It suffices to show that
\[
\dim(M\otimes S^+_{\frg, \frl})_\lambda-\dim(M\otimes S^-_{\frg, \frl})_\lambda=0
\]
for any $\lambda\in\frh^*$. Recall that $S_{\frg, \frl}\simeq\bigwedge\fru\otimes\bbC_{\rho(\bar\fru)}$ as $\frl$-modules. If $\lambda\not\in\nu+\rho(\bar\fru)+Q$, then
\[
\dim(M\otimes S^+_{\frg, \frl})_\lambda=\dim(M\otimes S^-_{\frg, \frl})_\lambda=0.
\]
If $\lambda\in\nu+\rho(\bar\fru)+Q$, then
\[
\begin{aligned}
\dim(M\otimes S^+_{\frg, \frl})_\lambda-\dim(M\otimes S^-_{\frg, \frl})_\lambda
=&\sum_{\mu\in\frh^*}\dim M_{\lambda-\mu}\left(\dim(S^+_{\frg, \frl})_\mu-\dim(S^-_{\frg, \frl})_\mu\right)\\
=&n\left(\dim S^+_{\frg, \frl}-\dim S^-_{\frg, \frl}\right)\\
=&n\sum_{i\in\bbN}(-1)^i\dim\wedge^i\fru=n(1-1)^{\dim\fru}=0.
\end{aligned}
\]
\end{proof}

\subsection{Proof of Theorem \ref{main2}} It suffices to prove the theorem for simple modules $M, N\in\caM(\frg, \frh)$. We proceed by induction on the rank of $[\frg, \frg]$. If $\mathrm{rank}[\frg,\frg]=0$, that is, $\frg=\frh$, then $M$ and $N$ are 1-dimensional weight modules. Therefore
\[
\mathrm{EP}_{\frh,\frh}(M, N)=\dim\Hom_\frh(M, N)=[M, N]_\frh=[I_{\frh,\frh}(M), I_{\frh,\frh}(N)]_\frh.
\]
Now suppose that $\mathrm{rank}[\frg, \frg]>0$. If $M$ is not cuspidal, by Fernando's theorem, we can assume that $M=L_\frp(V)$ for some proper parabolic subalgebra $\frp$ of $\frg$ with Levi subalgebra $\frl$ containing $\frh$ and cuspidal $(\frl, \frh)$-module $V$. Since $\mathrm{rank}[\frl,\frl]<\mathrm{rank}[\frg, \frg]$, we can apply the induction hypothesis to Proposition \ref{pind} and get
\[
\mathrm{EP}_{\frg, \frh}(M_\frp(V), N)=\sum_{i\in\bbN}(-1)^i\left[I_{\frl, \frh}(V), I_{\frl, \frh}(H^i(\fru, N))\right]_\frh.
\]
Using Lemma \ref{ep5} and Lemma \ref{index32}, one obtains
\[
\mathrm{EP}_{\frg, \frh}(M_\frp(V), N)=\left[I_{\frl, \frh}(I_{\frg, \frl}(M_\frp(V))), I_{\frl, \frh}(I_{\frg, \frl}(N))\right]_\frh.
\]
It follows from Lemma \ref{index4} that
\[
\mathrm{EP}_{\frg, \frh}(M_\frp(V), N)=\left[I_{\frg, \frh}(M_\frp(V)), I_{\frg, \frh}(N)\right]_\frh.
\]
With Proposition \ref{pind2} in hand, we can get
\[
\mathrm{EP}_{\frg, \frh}(L_\frp(V), N)=\left[I_{\frg, \frh}(L_\frp(V)), I_{\frg, \frh}(N)\right]_\frh
\]
by induction on $a_\frp(L_\frp(V))$. If $M$ is cuspidal and $N$ is not cuspidal, in view of Lemma \ref{dual1} and Proposition \ref{depair}, we can prove the theorem by a similar argument.

If both $M$ and $N$ are simple cuspidal $(\frg, \frh)$-modules. By Lemma \ref{cusp3}, there exists a commuting set $\Gamma'\subset \Delta$ and $\nu\in\frh^*$ such that $\Phi_{\Gamma'}^\nu M$ has no cuspidal subquotient. The set $\Gamma'$ can be extended to a commuting set $\Gamma\subset \Delta$ such that $\Gamma$ is a basis of $Q$. Since $\Phi_{\Gamma}^\nu M=\Phi_{\Gamma'}^\nu M$ has no cuspidal subquotient, the argument in the previous paragraph yields
\[
\mathrm{EP}_{\frg, \frh}(\Phi_\Gamma^{\nu} M, \Phi_\Gamma^\nu N)=[I_{\frg, \frh}(\Phi_\Gamma^{\nu} M),I_{\frg, \frh}(\Phi_\Gamma^\nu N)].
\]
It is easy to see that both $M$ and $\Phi_\Gamma^\nu M$ are $\Gamma$-bijective. By Lemma \ref{ep6}, we have $I_{\frg, \frh}(M)=I_{\frg, \frh}(\Phi_\Gamma^\nu M)=0$. It follows from Proposition \ref{proptst2} that
\begin{equation}
\begin{aligned}
\mathrm{EP}_{\frg, \frh}(M, N)=&\mathrm{EP}_{\frg, \frh}(\Phi_\Gamma^{\nu} M, \Phi_\Gamma^\nu N)=[I_{\frg, \frh}(\Phi_\Gamma^{\nu} M),I_{\frg, \frh}(\Phi_\Gamma^\nu N)]\\
=&0=[I_{\frg, \frh}(M),I_{\frg, \frh}(N)].
\end{aligned}
\end{equation}

As a consequence of Proposition \ref{index2}, we have the following result.

\begin{theorem}\label{main22}
If $M, N\in\mathcal{M}(\frg, \frh)$ have infinitesimal characters, then
\begin{equation}\label{epp}
\mathrm{EP}_{\frg,\frh}(M, N)=[H^+_{D(\frg,\frh)}(M)-H^-_{D(\frg,\frh)}(M), H^+_{D(\frg,\frh)}(N)-H^-_{D(\frg,\frh)}(N)]_\frh.
\end{equation}
\end{theorem}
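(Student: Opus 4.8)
The plan is to obtain Theorem \ref{main22} as an immediate corollary of Theorem \ref{main2} together with Proposition \ref{index2}. First I would invoke Theorem \ref{main2}: since $M, N \in \mathcal{M}(\frg,\frh)$, we have
\[
\mathrm{EP}_{\frg,\frh}(M, N) = [I_{\frg,\frh}(M),\, I_{\frg,\frh}(N)]_\frh .
\]
Next, observe that an object of $\mathcal{M}(\frg,\frh)$ is precisely a finitely generated admissible $(\frg,\frh)$-module, i.e. an admissible $(\frg,\frl)$-module in the sense of Proposition \ref{index2} with $\frl = \frh$. Hence, under the standing hypothesis that $M$ and $N$ have infinitesimal characters, Proposition \ref{index2} applied with $\frl$ replaced by $\frh$ gives the identities of virtual $\frh$-modules
\[
I_{\frg,\frh}(M) = H^+_{D(\frg,\frh)}(M) - H^-_{D(\frg,\frh)}(M), \qquad I_{\frg,\frh}(N) = H^+_{D(\frg,\frh)}(N) - H^-_{D(\frg,\frh)}(N).
\]
Substituting these two expressions into the right-hand side of the displayed equation above yields exactly the asserted formula.

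The only point that needs a word of justification is that the bracket $[\,\cdot\,,\,\cdot\,]_\frh$ on the right-hand side is legitimately defined, which requires the virtual $\frh$-modules entering it to be finite-dimensional. For the spin-index form this is Lemma \ref{ep4}; since Proposition \ref{index2} identifies $I_{\frg,\frh}(M)$ with the Dirac index $H^+_{D(\frg,\frh)}(M) - H^-_{D(\frg,\frh)}(M)$, the latter is finite-dimensional as well, so the substitution above is meaningful. Consequently there is no real obstacle: Theorem \ref{main22} follows formally once Theorem \ref{main2} and Proposition \ref{index2} are available, the argument being a one-line substitution together with this finiteness remark.
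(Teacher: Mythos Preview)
Your proposal is correct and matches the paper's own argument exactly: the paper simply states Theorem \ref{main22} ``as a consequence of Proposition \ref{index2}'' immediately after proving Theorem \ref{main2}, which is precisely the one-line substitution you describe. Your added remark on finite-dimensionality via Lemma \ref{ep4} is a welcome clarification but not something the paper spells out.
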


\begin{remark}
If the Dirac cohomology in the above theorem is replaced by the higher Dirac cohomology defined in \cite{PS}, then the equation (\ref{epp}) can be extended to all $M, N\in\mathcal{M}(\frg, \frh)$.
\end{remark}

Combined with Theorem \ref{main1}, we have the following corollary.

\begin{cor}
Let $M, N\in\caM(\frg, \frh)$. If $M$ is simple and not a highest weight module, then
\[
\mathrm{EP}_{\frg,\frh}(M, N)=\mathrm{EP}_{\frg,\frh}(N, M)=0.
\]
\end{cor}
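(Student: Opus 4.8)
The plan is to deduce this corollary directly from Theorem \ref{main1} and Theorem \ref{main2}, with essentially no new work. First I would note that a simple module $M\in\caM(\frg,\frh)$ has finite-dimensional weight spaces and is therefore countable-dimensional, so by the standard Dixmier--Quillen argument $Z(\frg)$ acts on $M$ by scalars; that is, $M$ has an infinitesimal character. (This is the same observation already invoked in the proof of Lemma \ref{ep3}.) Since $M$ is by hypothesis not a highest weight module, Theorem \ref{main1} gives $H_{D(\frg,\frh)}(M)=0$, hence in particular $H^+_{D(\frg,\frh)}(M)=H^-_{D(\frg,\frh)}(M)=0$.

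Next, because $M$ has an infinitesimal character, Proposition \ref{index2} applies and identifies the spin index with the Dirac index, so
\[
I_{\frg,\frh}(M)=H^+_{D(\frg,\frh)}(M)-H^-_{D(\frg,\frh)}(M)=0
\]
as a virtual $\frh$-module. Now Theorem \ref{main2} applies to both ordered pairs $(M,N)$ and $(N,M)$, since $M,N\in\caM(\frg,\frh)$ with no hypothesis on infinitesimal characters; by Lemma \ref{ep4} the virtual modules $I_{\frg,\frh}(M)$ and $I_{\frg,\frh}(N)$ are finite-dimensional, so the bracket pairing $[\,\cdot\,,\,\cdot\,]_\frh$ is well-defined on them. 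Therefore
\[
\mathrm{EP}_{\frg,\frh}(M,N)=[I_{\frg,\frh}(M),I_{\frg,\frh}(N)]_\frh=[0,I_{\frg,\frh}(N)]_\frh=0,
\]
and likewise $\mathrm{EP}_{\frg,\frh}(N,M)=[I_{\frg,\frh}(N),I_{\frg,\frh}(M)]_\frh=[I_{\frg,\frh}(N),0]_\frh=0$, since $[A,B]_\frh$ vanishes as soon as one of $A,B$ is the zero virtual module (all four $\Hom_\frh$-terms in its definition involve a zero argument).

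I do not expect any genuine obstacle: the corollary is a formal consequence of the two main theorems of the paper. The only step meriting a sentence of justification is the passage from $H_{D(\frg,\frh)}(M)=0$ to $I_{\frg,\frh}(M)=0$ via Proposition \ref{index2}, which requires $M$ to have an infinitesimal character; this is automatic for simple $M$. One could also phrase the whole argument through Theorem \ref{main22} instead of Theorem \ref{main2}, since $M$ does have an infinitesimal character, but going through Theorem \ref{main2} has the advantage of placing no restriction on $N$.
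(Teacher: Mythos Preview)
Your proposal is correct and follows essentially the same approach the paper intends: the corollary is placed immediately after Theorem \ref{main22} with the remark ``Combined with Theorem \ref{main1}'', and your argument is exactly this combination, routed through Theorem \ref{main2} and Proposition \ref{index2}. Your observation that going through Theorem \ref{main2} rather than Theorem \ref{main22} avoids any hypothesis on $N$ is apt and makes the deduction cleanest.
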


\end{document}